\documentclass[11pt]{amsart}
\usepackage{amsthm,amsmath,amsxtra,amscd,amssymb,xypic,hyperref,cleveref,xparse,xspace,mathtools}
\usepackage[all]{xy}
\usepackage{tikz-cd}
\numberwithin{equation}{section}
\usepackage{bm}
\usepackage{caption,subcaption}


\theoremstyle{plain}
\newtheorem{thm}{Theorem}[section]
\newtheorem{lm}[thm]{Lemma}
\newtheorem{prop}[thm]{Proposition}
\newtheorem{cor}[thm]{Corollary}

\theoremstyle{definition}

\newenvironment{df}
{\pushQED{\qed}\dfx}
{\popQED\enddfx}

\newenvironment{rem}
{\pushQED{\qed}\remx}
{\popQED\endremx}
\newtheorem{exax}[thm]{Example}
\newenvironment{exa}
{\pushQED{\qed}\exax}
{\popQED\endexax}


\AtBeginDocument{%
   \def\MR#1{}
 }


\def\={\;=\;}  \def\+{\,+\,}

\newcommand{\inj}{\hookrightarrow}
\newcommand{\surj}{\twoheadrightarrow}



\newcommand{\codim}{\operatorname{codim}}
\newcommand{\GL}{\operatorname{GL}}

\newcommand{\ord}{\operatorname{ord}}

\newcommand{\mult}{\operatorname{mult}}
\newcommand{\Hur}{\operatorname{Hur}}
\newcommand{\GM}{\operatorname{GM}}

\newcommand{\Span}{\operatorname{span}}

\newcommand{\divisor}[1]{{\rm div }\left( #1 \right)}


\newcommand{\BigStar}{\mbox{\normalfont\Large\bfseries  $\boldsymbol\ast$}}
\newcommand{\BigZero}{\mbox{\normalfont\Large\bfseries  0}}

\newcommand{\Teichmuller}{Teich\-m\"uller\xspace}

\newcommand\thc{top-horizontal-crossing\xspace}
\newcommand\nhc{non-top-horizontal-crossing\xspace}

\newcommand{\hcc}{\thc cycle\xspace}
\newcommand{\hccs}{\thc cycles\xspace}

\newcommand{\nhccs}{non-\thc cycles\xspace}
\newcommand{\hces}{\thc equations\xspace}
\newcommand{\nhces}{non-\thc equations\xspace}

\newcommand{\nhce}{non-\thc equation\xspace}


\newcommand{\calC}{\mathcal C}

\newcommand{\calM}{{\mathcal M}}

\newcommand{\CC}{{\mathbb{C}}}

\newcommand{\PP}{{\mathbb{P}}}

\newcommand{\RR}{{\mathbb{R}}}
\newcommand{\ZZ}{{\mathbb{Z}}}






\newcommand{\modulin}[1][g,n]{{\mathcal M}_{#1}}
\newcommand{\omodulin}[1][g,n]{{\Omega\mathcal M}_{#1}}


\newcommand{\poles}{\underline{p}}
\newcommand{\zeroes}{\underline{z}}


\newcommand{\lG}{{\Gamma}}     
\newcommand{\ltop}[1][e]{\ell(#1^{+})}                   
\newcommand{\lbot}[1][e]{\ell(#1^{-})}                  
\newcommand{\Mrel}{$M$-cross-related\xspace}
\newcommand{\rrefrel}{rref-cross-related\xspace}
\newcommand{\Mequiv}{$M$-cross-equivalence\xspace}
\newcommand{\rrefequiv}{rref-cross-equivalence\xspace}

\newcommand{\hor}{{\mathrm {hor}}}
\newcommand{\ver}{{\mathrm {ver}}}

\DeclareDocumentCommand{\Ehor}{O{\lG}}{E^{\hor}(#1)}   
\DeclareDocumentCommand{\Ever}{O{\lG}}{E^{\ver}(#1)}  
\DeclareDocumentCommand{\Ehori}{O{i} O{\lG} }{E^{\hor}_{(#1)}(#2)}   

\DeclareDocumentCommand{\abEhor}{O{\lG}}{\widetilde{E^{\hor}}(#1)}

\renewcommand{\Im}{{\operatorname{Im}}}
\newcommand{\Ann}{{\operatorname{Ann}\,}}

\DeclareDocumentCommand{\kmoduli}{ O{\mu} O{g} O{n} }{\calM_{#2,#3}(#1)}
\DeclareDocumentCommand{\barkmoduli}{ O{\mu} O{g} O{n} }{\overline{\calM}_{#2,#3}(#1)}
\DeclareDocumentCommand{\obarkmoduli}{ O{\mu} O{g} O{n} }{\Omega\overline{\calM}_{#2,#3}(#1)}

\DeclareDocumentCommand{\LMS}{ O{\mu} O{g,n}} {{\Xi\overline{\calM}_{#2}(#1)}}
\DeclareDocumentCommand{\RBLMS}{ O{\mu} O{g,n}}{\Xi\widehat{\calM}_{#2}(#1)}

\newcommand{\oM}{{\overline M}}
\newcommand{\pM}{\partial M}
\newcommand{\DG}[1][\lG]{D_{#1}}
\newcommand{\pMG}[1][\lG]{\pM_{#1}}
\newcommand{\topl}{{\top}}
\newcommand{\toplr}[1][\beta]{ [{#1}_{\top}]}

\DeclareDocumentCommand{\relhomZ}{O{X} O{\zeroes} O{\poles}}{H_1(#1\setminus #3,#2;\ZZ)}
\DeclareDocumentCommand{\relhomC}{O{X} O{\zeroes} O{\poles}}{H_1(#1\setminus #3,#2;\CC)}
\newcommand\hlgr[1][i]{H_1(X_{(#1)}\setminus (\poles\cup \Lambda_{(#1)}^{hor}), \zeroes\cup \Lambda_{(#1)}^{ver,+};\ZZ)}

\newcommand\adC{\gamma}
\newcommand\crC[1][(i)]{\delta^{#1}}
\newcommand\hfC[1][(i)]{\alpha^{#1}}
\newcommand\lgad{K}
\newcommand\lgcr[1][i]{c(#1)}
\newcommand\lghf[1][i]{d(#1)}
\newcommand\van[1][e]{\lambda_{#1}}

\newcommand{\scl}[1][i]{t_{\lceil {#1}\rceil}}

\newcommand\logP[1][\gamma]{\Psi_{#1}}

\newcommand \lrra[1]{\left \langle #1\right \rangle}
\newcommand\rref{reduced row echelon form\xspace}

\usepackage{color}
\begin{document}
\title{Equations of linear subvarieties of strata of differentials}
\author{Frederik Benirschke}
\address{Mathematics Department, Stony Brook University, Stony Brook, NY 11794-3651, USA}
\email{Frederik.Benirschke@stonybrook.edu}
\author{Benjamin Dozier}
\email{Benjamin.Dozier@stonybrook.edu}
\author{Samuel Grushevsky}
\email{sam@math.stonybrook.edu}
\thanks{Research of the third author is supported in part by the National Science Foundation under the grant DMS-18-02116.}

\begin{abstract}
    We investigate the closure~$\oM$ of a linear subvariety~$M$  of a stratum of meromorphic differentials in the multi-scale compactification constructed in~\cite{BCGGMmsds}. Given the existence of a boundary point of~$M$ of a given combinatorial type, we deduce that certain periods of the differential are pairwise proportional on~$M$, and deduce further explicit linear defining relations. These restrictions on linear defining equations of~$M$ allow us to rewrite them as explicit analytic equations in plumbing coordinates near the boundary, which turn out to be binomial. This in particular shows that locally near the boundary $\oM$ is a toric variety, and allows us to prove existence of certain smoothings of boundary points and to construct a smooth compactification of the Hurwitz space of covers of $\PP^1$. As applications of our techniques, we give a fundamentally new proof of a generalization of the cylinder deformation theorem of Wright~\cite{wright} to the case of real linear subvarieties of meromorphic strata.
\end{abstract}
\maketitle

\section{Introduction}\label{sec:intro}
For an $n$-tuple of integers $\mu=(m_1,\dots,m_n)$ satisfying $m_1+\dots+m_n=2g-2$, $\omodulin(\mu)$ denotes the stratum of meromorphic differentials of type $\mu$, that is the locus of triples $(X,\underline{x},\omega)$ where $\underline{x}=\lbrace x_1,\dots,x_n\rbrace$ is a collection of distinct marked points on a smooth genus $g$ Riemann surface~$X$, and $\omega$ is a non-zero meromorphic differential on~$X$ such that its divisor of poles and zeroes is $\divisor{\omega}=\sum m_ix_i$. We call points $(X,\underline{x},\omega)\in \omodulin(\mu)$ {\em flat surfaces of type~$\mu$}, and will also write $\zeroes\subset X$ for the set of all zeroes of $\omega$, that is all $x_i$ such that $m_i\ge 0$, and $\poles\subset X$ for the set of all poles of~$\omega$. A natural set of local coordinates on the stratum is given by period coordinates: the integrals of $\omega$ over a chosen basis of $\relhomZ$.  The group $\GL^+(2,\RR)$ naturally acts on the stratum, linearly in period coordinates.  This dynamical system is the central object of study in \Teichmuller dynamics.

The foundational results of Eskin-Mirzakhani~\cite{eskinmirzakhani} and Eskin-Mirzakhani-Mohammadi~\cite{esmimo} show that for holomorphic strata (that is, if all $m_i$ are positive), the closure, in the Euclidean topology, of any $\GL^+(2,\RR)$ orbit is cut out by linear equations with real coefficients.  Throughout this paper, when we say ``linear equation'', we mean an equation with no constant term.  Filip~\cite{filip} showed that such orbit closures, commonly called {\em affine invariant manifolds}, are algebraic varieties.

In this paper we study a more general class of subvarieties of the strata than affine invariant manifolds. An {\em algebraic} subvariety~$M\subseteq\omodulin(\mu)$ of complex codimension~$m$ is called a {\em linear (sub)variety} if at any point it is locally
a finite union of linear subspaces, in period coordinates. Most of our results are in this generality, i.e.~allowing complex coefficients and allowing meromorphic strata --- note in particular that a linear equation with complex coefficients does not need to be preserved by the $\GL^+(2,\RR)$ action.
While we require~$M$ to be algebraic, note that recent examples of Bakker-Mullane~\cite{bamu} indicate that a locus
given locally in a meromorphic stratum by complex-linear equations may not be algebraic. Our paper continues in the spirit of the works of the second author~\cite{ben} and the first author~\cite{fred}, using and developing degeneration techniques for flat surfaces to prove various properties of linear subvarieties. In this work, we also obtain information about the geometry of defining equations. This could be used to understand (or rule out the existence of certain) linear subvarieties in general, while our more precise results for affine invariant manifolds could provide tools for classifying $\GL^+(2,\RR)$-orbit closures.

\smallskip
In~\cite{BCGGMmsds} the moduli space of multi-scale differentials $\LMS$ was constructed, such that $\omodulin(\mu)\subset\LMS$ is open dense, and such that the quotient $\PP\LMS=\LMS/\CC^*$, where $\CC^*$ scales the differential, is compact.  A key property of both $\LMS$ and $\PP\LMS$ is that they are \emph{smooth} (as complex orbifolds) algebraic varieties, with normal crossing boundary. A multi-scale differential is a stable Riemann surface $X$ together with a map $\ell:V(\Gamma)\surj \{0,-1,\dots,-L(\Gamma)\}$ from the set of vertices of the dual graph $\Gamma$ of $X$, and together with a collection $\eta$ of meromorphic differentials $\eta_v$ on the irreducible components $X_v$ of $X$ satisfying certain compatibility conditions (additionally one needs an enhancement of the level graph and a prong-matching, see below). The boundary $\partial\LMS$ is stratified, with {\em open} strata $\DG$ indexed by enhanced level graphs~$\lG$. Up to finite covers, each such stratum is a subspace of a product of certain strata of differentials given by certain residue conditions, and as such, it (more precisely, some cover of it, see~\Cref{rem:genstratadivisorial}) also admits local period coordinates, see~\cite[Sec.~4]{euler} for much more on the geometry of the strata, which we will also use below.

In~\cite{fred} the first author used a detailed analysis of the degeneration behavior of period coordinates to prove that the intersection $\pMG:=\pM\cap\DG $ of the boundary $\pM:=\oM\cap\partial\LMS$ of the closure of any linear variety~$M$ with any boundary stratum is locally given by linear equations in period coordinates on that boundary stratum. In this paper we investigate geometric properties and stratifications of boundaries of linear varieties. While our results are described and obtained locally near the boundary of $\LMS$, they provide global geometric information: given the existence of some degeneration, they restrict the defining equations of~$M$.

\subsection*{Defining equations of linear subvarieties}
We fix once and for all a boundary point $p_0\in\DG$, and work throughout in a small neighborhood~$U$ of~$p_0$ in~$\LMS$, that we may need to shrink further finitely many times. Recall that the edges $e\in E(\Gamma)$ of the dual graph are called horizontal or vertical depending on whether they connect vertices of same or different levels for $\ell$; we write $E(\Gamma)=:\Ehor\sqcup\Ever$. For a vertical edge $e\in\Ever$ we denote $\ell(e^\pm)$ the levels of its top and bottom vertices, respectively.

We will consider the defining equations of~$M$ at a point $p=(X,\omega)\in  M\cap U$. In general,~$M$ is an immersed, and not embedded, submanifold of the stratum, and we always require~$p$ to be a point where~$M$ is locally embedded (and so~$M$ is smooth at $p$).  We then call a {\em defining equation (of~$M$ at~$p$)} a linear equation~$F$ satisfied locally on~$M$ near~$p$. We think of~$F$ as an equation $F(X,\omega)=\int_\beta \omega=0$ for some relative homology class $\beta\in\relhomC$. The vector space of such defining equations~$F$ of~$M$ at~$p$ is locally constant on~$M$ in a neighborhood of~$p$.

Recall that corresponding to any node $e\in E(\lG)$ there is the pinching curve $\Lambda_e\subset X$. Denote by $\lambda_e$ the homology class of $\Lambda_e$, called the {\em vanishing cycle}. We say that {\em $F$ crosses $e$} if the intersection number $\langle \beta,\lambda_e\rangle$ is non-zero. We denote by $\Ehor[F]\subseteq\Ehor$ the set of all horizontal nodes crossed by~$F$.

We now define the notion of two nodes $e_1,e_2$ being {\em \Mrel}.
A particular case of being \Mrel is when there exists a defining equation~$F$ of~$M$ at~$p$ crossing the two nodes, i.e. $e_1,e_2\in \Ehor[F]$, while there is no defining equation $F'$ crossing a non-empty proper subset of the nodes of $F$, i.e. $\Ehor[F'] \subsetneq \Ehor[F]$, $\Ehor[F']\ne \emptyset$.  We define {\em \Mequiv classes} to be the equivalence classes in~$\Ehor$ generated by this particular case, and call two nodes \Mrel if they are in the same equivalence class  (see~\Cref{df:Mrelated} for a more detailed discussion).

Our first result is that the periods over the vanishing cycles for any two \Mrel nodes are proportional.
\begin{thm}[Periods over horizontal vanishing cycles are proportional]\label{thm:Mrltd}
For any pair $e_1, e_2\in \Ehor$ of \Mrel horizontal nodes, the integrals of $\omega$ over the corresponding vanishing cycles $\lambda_{e_1},\lambda_{e_2}$ are proportional on~$M$. In particular, the nodes are at the same level.
\end{thm}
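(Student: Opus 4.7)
The plan is to combine an asymptotic analysis of the defining equation $F$ near $p_0$ with a monodromy argument. By the transitivity of proportionality, and since the \Mequiv classes are generated by the particular case described in the excerpt, it suffices to prove the claim when there is a defining equation $F=\int_\beta\omega$ of $M$ at $p$ with $e_1,e_2\in \Ehor[F]$ and $\Ehor[F]$ minimal, i.e.\ no defining equation $F'$ satisfies $\emptyset\subsetneq \Ehor[F']\subsetneq \Ehor[F]$. I would fix a basis of $\relhomC$ containing each horizontal vanishing cycle $\lambda_e$ ($e\in\Ehor$) together with a dual crossing cycle $\gamma_e$ with $\langle\gamma_e,\lambda_e\rangle=1$, and decompose $\beta=\sum_{e\in\Ehor[F]}c_e\gamma_e+\beta_0$, where $\beta_0$ has vanishing intersection with every horizontal vanishing cycle, so that $\int_{\beta_0}\omega$ extends analytically across $\bdry\LMS$. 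The Picard--Lefschetz-type expansion
\[
\int_{\gamma_e}\omega \= \frac{\log t_e}{2\pi i}\int_{\lambda_e}\omega \+ R_e,
\]
with $R_e$ single-valued and bounded near $p_0$, then yields, writing $u_e:=\int_{\lambda_e}\omega$, the logarithmic decomposition
\[
F \= \frac{1}{2\pi i}\sum_{e\in\Ehor[F]} c_e\,u_e\,\log t_e \+ (\text{analytic bounded terms}).
\]

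Next I would exploit that $F\equiv 0$ on $M^\circ:=M\cap(U\setminus\bdry\LMS)$. Analytic continuation of $F$ along any loop $\gamma\subset M^\circ$ must return to $0$, so its net monodromy shift vanishes. Since going once around the divisor $D_e\subset\LMS$ shifts $\log t_e$ by $2\pi i$, the shift of $F$ along $\gamma$ equals $\sum_e w_e(\gamma)\,c_e\,u_e$, where $w_e(\gamma)\in\ZZ$ is the winding number of $\gamma$ around $D_e$. Hence for every loop $\gamma\subset M^\circ$,
\[
\sum_{e\in\Ehor[F]} w_e(\gamma)\,c_e\,u_e \= 0 \qquad \text{identically on } M.
\]
Letting $L\subseteq \ZZ^{\Ehor[F]}$ denote the lattice of winding-number vectors arising this way, the tuple $(c_e u_e)_{e\in\Ehor[F]}$ takes values pointwise on $M$ in the orthogonal complement $L^\perp\otimes\CC$.

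The crux of the proof, and the step I expect to be the main obstacle, is to deduce from the minimality of $\Ehor[F]$ that $(c_e u_e)_{e\in\Ehor[F]}$ in fact takes values in a fixed one-dimensional subspace of $L^\perp\otimes\CC$ as the point varies over $M$; this immediately gives pairwise proportionality of the $u_e$ for $e\in\Ehor[F]$ with explicit constant ratios. My plan is an argument by contradiction: assuming $(c_e u_e)$ varies in a subspace of dimension at least $2$, I would attempt to extract, by restricting $F$ to a nonempty proper subset $T\subsetneq \Ehor[F]$ of horizontal crossings and subtracting off the remaining $\int_{\gamma_e}\omega$ periods using a cancellation in $L^\perp$ coming from the extra dimension, a new defining equation with $\Ehor=T$, contradicting minimality. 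Making this rigorous will require the local description of $\oM$ near $p_0$ from~\cite{fred} in order to identify $L$ with the winding lattice of the codimension-one boundary divisors of $\oM$ passing through $p_0$, and to control the interplay between the logarithmic and the bounded parts in the decomposition of $F$.

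Finally, once $u_{e_1}/u_{e_2}$ is a constant on $M$, the assertion that $e_1$ and $e_2$ lie at the same level follows from the scaling behaviour in the multi-scale compactification: for a horizontal $e$ at level $\ell(e)$, the period $u_e$ scales as $s_{\ell(e)}$ times a residue intrinsic to the level-$\ell(e)$ differential, while the level parameters $s_\ell$ vary algebraically independently in directions transverse to $\DG$. A constant ratio $u_{e_1}/u_{e_2}$ therefore forces $s_{\ell(e_1)}/s_{\ell(e_2)}$ to equal a fixed function of the boundary coordinates on $M$, which on a positive-dimensional $M$ is only consistent with $\ell(e_1)=\ell(e_2)$ (outside the trivial case in which both periods vanish identically on $M$).
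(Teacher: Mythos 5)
Your outline shares some ingredients with the paper's argument (the log-period expansion, the reduction to a single ``minimal'' $\Ehor[F]$, the observation that monodromy of $F$ around the boundary must vanish identically on $M$), but it leaves the central step unresolved, and that step is where essentially all the content of the theorem lives. The monodromy of $F$ around a single loop $\gamma\subset M\setminus\bdry\LMS$ gives one relation $\sum_{e} w_e(\gamma)\,c_e\,\int_{\lambda_e}\omega = 0$ (this is the paper's \Cref{prop:monodromy}), but you need $|\Ehor[F]|-1$ independent such relations to pin $(c_e u_e)_e$ to a line, and you offer no mechanism for producing them: the available winding vectors are precisely those realized by the vanishing orders of the plumbing parameters along degenerating families inside $\oM$, and a priori they could span a small sublattice. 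Your suggested contradiction --- ``subtract off the remaining periods using a cancellation in $L^\perp$ coming from the extra dimension'' to manufacture a defining equation with strictly smaller $\Ehor$ --- is exactly the claim that needs to be proved, not an independent tool. You also cannot work only with loops in $M^\circ$: the relation produced by \Cref{prop:monodromy} holds on the boundary stratum $\pMG$, and one needs the lifting mechanism of \Cref{rem:lift} (restricting to equations contained entirely in the bottom level, which is why the paper peels off one level at a time) to promote it to a defining equation of $M$ itself.

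The paper's actual proof fills this gap with machinery that has no analogue in your sketch. It uses \Cref{thm:divisorial}, which says that at a purely horizontal divisorial degeneration pinching $k$ new nodes, the rref matrix of lost equations has the near-identity shape with exactly one extra nonzero column: this immediately produces $k-1$ defining equations each crossing a pivot node plus the single fixed non-pivot node, and \Cref{prop:monodromy} applied to each gives $k-1$ independent pairwise proportionalities. The proof then constructs a chain of divisorial undegenerations $pt=\Gamma_0\rightsquigarrow\cdots\rightsquigarrow\Gamma_d=\Gamma$ (one horizontal node or one level transition at a time, in level order), shows that a single $\lG$-adapted basis can be chosen that is $\lG_j$-adapted for every $j$ simultaneously, and inducts along the chain, lifting the proportionality to $M$ at each step. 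Your approach would have to reprove an equivalent of \Cref{thm:divisorial} to bound $\rank L$ from below, and would still need the level-by-level peeling to make the lift legitimate, so it is not actually shorter; as written it is incomplete. The final paragraph about equal levels is fine in spirit, though the paper's version (\Cref{cor:Mrltdlvl}) is cleaner: it uses directly that the residue of the twisted differential at a horizontal node is nonzero, rather than appealing to independent variation of the scaling parameters.
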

In the case when~$M$ is an affine invariant manifold, the above can be deduced from Wright's Cylinder Deformation Theorem~\cite{wright}.  We give a fundamentally new proof of a generalization of the Cylinder Deformation Theorem (see \Cref{thm:cyldeformation} below), and the above is one of the key tools we use. See \Cref{exa:cross} for a simple example of a set of \Mrel nodes and how the above Theorem applies.

For periods over the vanishing cycles for vertical nodes, we have
\begin{thm}[Relations among periods over vertical vanishing cycles]\label{thm:vertres}
For any defining equation $F$ of~$M$ at $p$, and for any level $i\leq \topl(F)$ let $e_1,\dots,e_k\in\Ever$ be all the vertical nodes crossed by~$F$ such that $\ltop[e_j]>i\ge\lbot[e_j]$, i.e. all vertical nodes that cross the level transition between levels $i+1$ and $i$. Then the set of periods of~$\omega$ over vanishing cycles $\van[e_j]$ satisfy a linear relation on~$M$ near~$p$.
\end{thm}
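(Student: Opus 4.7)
The plan is to extract the desired relation as the coefficient of a divergent $\log(s_i)$ term in the local expansion of $F = \int_\beta \omega$ near the boundary point $p_0 \in \DG$. Since $F$ is a defining equation of $M$, we have $F \equiv 0$ on $M$ in a neighborhood of $p$, so the idea is to expand $F$ in the plumbing and level-scaling coordinates of $\LMS$ and argue that each coefficient of a multi-valued logarithmic term must vanish on $M$.

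First, I would recall the standard plumbing expansion. Near a vertical node $e$, locally modeled by $xy=t_e$ with $\omega$ having a simple pole at the node of residue $\tfrac{1}{2\pi i}\int_{\van[e]}\omega$, any relative cycle crossing $e$ contributes a logarithmic term. Lifting to the universal cover of $U \setminus \partial\LMS$ so that all logarithms are single-valued, one obtains (cf.~\cite{fred})
\[
\int_\beta \omega \;=\; -\frac{1}{2\pi i}\sum_{e \in \Ever}\langle \beta, \van[e]\rangle \left(\int_{\van[e]}\omega\right)\log(t_e) \;+\; h,
\]
where $h$ is holomorphic in the local coordinates of $\LMS$ near $p_0$.

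Second, I would rewrite this in the multi-scale coordinates. By the construction of $\LMS$ in~\cite{BCGGMmsds}, for each vertical edge $e$ the plumbing parameter $t_e$ factors as a monomial in the level scales $s_i$ for the transitions traversed by $e$ (i.e.\ for $i$ with $\ltop[e] > i \ge \lbot[e]$), with positive integer exponents $c_{e,i}$, multiplied by a factor contributing no further $\log(s_i)$'s. Substituting $\log(t_e)=\sum_i c_{e,i}\log(s_i)+(\text{holomorphic})$ and collecting, the coefficient of $\log(s_i)$ in $\int_\beta\omega$ is exactly
\[
-\frac{1}{2\pi i}\sum_{e \,:\, \ltop[e] > i \ge \lbot[e]} c_{e,i}\,\langle \beta, \van[e]\rangle \int_{\van[e]}\omega,
\]
where the sum runs over precisely the vertical nodes $e_1,\dots,e_k$ in the statement (terms with $\langle \beta, \van[e]\rangle = 0$ drop out).

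Finally, I would conclude via the vanishing of $F|_M$. The functions $\log(s_i)$ for distinct $i$ have distinct nontrivial monodromies around the boundary divisors of $\LMS$, whereas $h$ is single-valued. Since $F \equiv 0$ on $M$ and $M$ is not contained in any $\{s_i = 0\}$ (because $M$ lies in the open stratum), the coefficient of each $\log(s_i)$ must vanish identically on $M$, giving the desired relation
\[
\sum_{j=1}^{k} c_{e_j,i}\,\langle \beta, \van[e_j]\rangle \int_{\van[e_j]}\omega \;=\; 0,
\]
which is non-trivial because each $\langle\beta,\van[e_j]\rangle \neq 0$ by the crossing hypothesis and each $c_{e_j,i}$ is a positive integer. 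The main technical obstacle is the careful bookkeeping of the multi-scale expansion---pinning down the monomial factorization of $t_e$ in the level scales and rigorously justifying the log-independence argument near boundary points where $\oM$ may be tangent to several boundary divisors---but the required analytic and combinatorial ingredients are precisely those developed in~\cite{fred} and~\cite{BCGGMmsds}.
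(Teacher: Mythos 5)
Your approach is morally the same as the paper's---both extract the linear relation from the monodromy of $F$ around the boundary, and your local expansion in terms of log periods is correct (this is exactly Proposition~\ref{prop:logp}, i.e.\ \cite[Thm.~5.2]{fred}, combined with the monomial relation~\eqref{eq:se} between the node plumbing parameter $s_e$ and the level parameters $t_i$). However, the step you call a ``main technical obstacle'' and defer to ``ingredients developed in~\cite{fred} and~\cite{BCGGMmsds}'' is in fact the entire content of the proof, and it is not supplied by those references; it requires \Cref{thm:divisorial} of the present paper.

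Concretely, the gap is in the assertion that ``the coefficient of each $\log(s_i)$ must vanish identically on $M$.'' Once you write $F = h + \sum_i c_i \log t_i$ with $h, c_i$ holomorphic (here using the paper's notation $t_i$ for level parameters), the vanishing of $F$ on $M$ forces the vanishing of $\sum_i n_i c_i$ for every loop in $M^\circ$, where $(n_i)$ is the tuple of winding numbers of that loop around the divisors $\{t_i = 0\}$. To isolate $c_i = 0$ for a \emph{single} fixed $i$ you need a loop whose winding is nontrivial around $\{t_i = 0\}$ and trivial around every other boundary divisor, and there is no a priori reason such a loop should exist inside $M$: the closure $\oM$ could, for all you know, meet $\{t_i = 0\}$ only along loci where several boundary coordinates vanish simultaneously (e.g.\ if $\oM$ locally looked like $\{t_i = t_j\}$), in which case all available monodromies mix $\log t_i$ with $\log t_j$ and you can conclude only that certain linear combinations of the $c_i$'s vanish. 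In the paper this is resolved by first proving \Cref{thm:divisorial}: the generic point of any irreducible component $Y$ of $\pM \cap \{t_i = 0\}$ lies in the open stratum $D_{\lG[i]}$ where \emph{only} $t_i$ vanishes and there are no horizontal nodes, so \Cref{prop:monodromy} applied to a disk in $\oM$ transverse to $Y$ at such a generic point picks up nontrivial monodromy exactly around the vanishing cycles $\lambda_e$ with $\ell(e^+)>i\ge\ell(e^-)$, yielding the desired relation $R_i(F)=0$. \Cref{thm:divisorial} is not a formality here; its proof uses the codimension formula~\eqref{eq:codim}, the rref structure of the defining equations, and the monodromy argument itself. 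So your proof is not wrong in outline, but as written it assumes away precisely the structural fact about $\pM$ that the paper has to establish first.
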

Here recall that cutting~$X$ along $\Lambda_e$ for all $e\in\Ever$ decomposes $X$ into the level subsurfaces~$X=\cup_i X_{(i)}$. For a collection of paths $\beta'\subset X$ we call its top level $\topl(\beta')$ the maximal $i$ such that $\beta'\cap X_{(i)}\ne\emptyset$. The top level $\topl(F)$ denotes then the top level of the homology class $[\beta]$, which is defined to be the minimum of $\topl(\beta')$ over all collections of paths $\beta'$ representing the class $[\beta]$. In~\Cref{prop:eqsquantitative} we will prove a more precise version of this Theorem that gives the coefficients of such a relation.

\begin{exa}
Consider the flat surface in the stratum $\Omega\mathcal{M}_{3,3}(1,1,2)$ shown in \Cref{fig:residue_reln}.  We claim that \Cref{thm:vertres} implies that there is no linear subvariety~$M$ containing this surface that is locally described by the single equation
$$
F= \int_{\gamma_1} \omega - \int_{\gamma_2} \omega = 0.
$$

\begin{figure}[h]
  \begin{center}
    \begin{subfigure}[b]{0.65\textwidth}
      \includegraphics[scale=0.59]{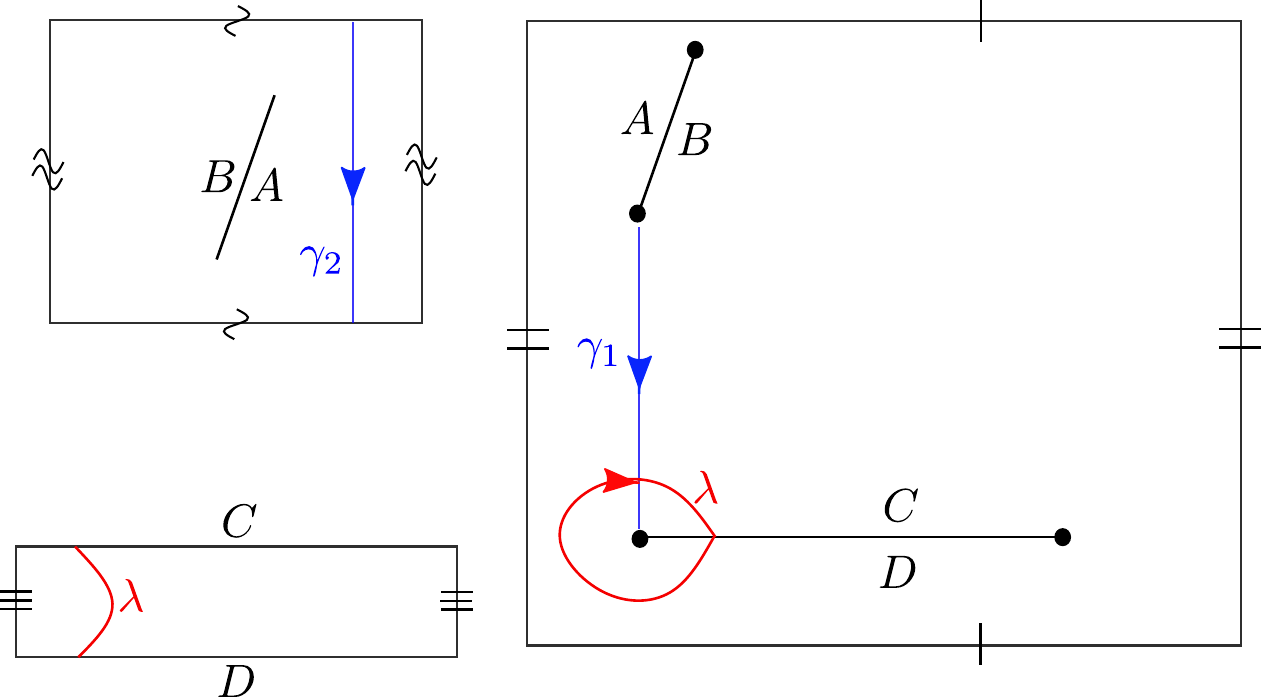}
      \caption{Flat surface in $\Omega\mathcal{M}_{3,3}(1,1,2)$. } 
      \label{fig:residue_reln}
    \end{subfigure}
    \begin{subfigure}[b]{0.34\textwidth}
      \includegraphics[scale=0.41]{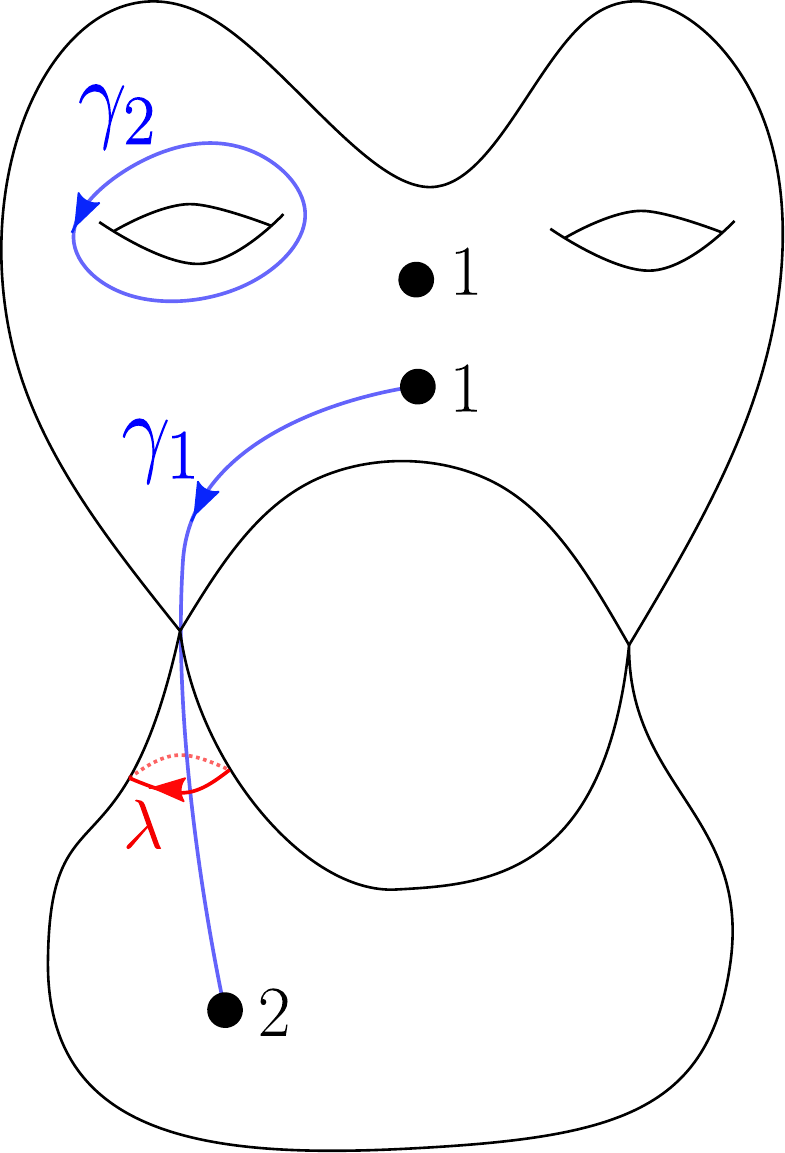}
      \caption{Limit as $\lambda\to 0$}
      \label{fig:residue_reln_limit}
    \end{subfigure}
    \caption{}
  \end{center}
\end{figure}

In fact, if there were, we could degenerate, staying in~$M$, by sending the period of the side labeled $\lambda$ to $0$ (and keeping the rest of the surface unchanged).  This gives a family of surfaces converging to a point $p_0\in \Xi \overline{\mathcal{M}}_{3,3}(1,1,2)$ with two levels, no horizontal nodes, and the class $\lambda$ is the vanishing cycle of a vertical node, with the curve~$\Lambda$ representing the homology class~$\lambda$ depicted in \Cref{fig:residue_reln_limit}. The period over this vanishing cycle is non-zero on all flat surfaces near $p_0$.  The equation $F$ crosses the (vertical) vanishing cycle $\lambda$, and no other vanishing cycles.  By \Cref{thm:vertres}, there is a non-trivial linear relation among the periods of the vanishing cycles crossed by $F$ on flat surfaces near $p_0$. Since this set of vanishing cycles is just $\{\lambda\}$, it follows that $\int_\lambda\omega = 0$ for all surfaces in~$M$ near $p_0$, which is impossible.

(In this particular case, such a linear manifold can also be ruled out using the Cylinder Deformation Theorem \cite{wright}, but in more complicated examples this would not be possible.)
\end{exa}

We further show that defining equations split into those that do not cross any horizontal nodes, and those that only cross horizontal nodes at their top level.
\begin{thm}[Decomposition of linear equations]\label{thm:decomp}
Any defining equation~$F$ of~$M$ at~$p$ can be written as a sum
\begin{equation}\label{eq:decomp}
F=H_1+\dots+H_k+G
\end{equation}
of defining equations of~$M$ at~$p$ (possibly with $k=0$) such that
\begin{enumerate}
\item each $H_j$ crosses a primitive collection of horizontal nodes of level $\topl(H_j)$, and no other horizontal nodes.
\item $\Ehor[H_j]\subseteq \Ehor[F]$ for any~$j$.
\item $G$ does not cross any horizontal nodes: $\Ehor[G]=\emptyset$.
\end{enumerate}
\end{thm}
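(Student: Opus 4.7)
The plan is to argue by induction on $|\Ehor[F]|$. The base case $\Ehor[F]=\emptyset$ is immediate: take $k=0$ and $G:=F$, so that (1)--(2) hold vacuously and (3) is trivial.

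For the inductive step, assume $\Ehor[F]\neq\emptyset$. The aim is to extract a single summand $H_1$ satisfying (i) $\Ehor[H_1]$ is a primitive collection of horizontal nodes, all at the common level $\topl(H_1)$; (ii) $\Ehor[H_1]\subseteq\Ehor[F]$; and (iii) $|\Ehor[F-H_1]|<|\Ehor[F]|$. Then the inductive hypothesis applied to $F-H_1$ yields $F-H_1=H_2+\dots+H_k+G$ satisfying (1)--(3), and the desired decomposition is $F=H_1+H_2+\dots+H_k+G$. Condition (2) propagates along the induction since $\Ehor[H_j]\subseteq\Ehor[F-H_1]\cup\Ehor[H_1]\subseteq\Ehor[F]$ for every $j\geq 2$.

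To produce $H_1$, I first select a primitive subcollection of $\Ehor[F]$. The collection of sets $\Ehor[F']$, as $F'$ ranges over defining equations of $M$ at $p$ satisfying $\emptyset\neq\Ehor[F']\subseteq\Ehor[F]$, is finite and nonempty (it contains $\Ehor[F]$). Pick $F'$ so that $|\Ehor[F']|$ is minimal, and set $P:=\Ehor[F']$. Then $P$ is primitive: any defining equation $F''$ with $\emptyset\neq\Ehor[F'']\subsetneq P$ would contradict this minimality. By \Cref{thm:Mrltd}, all nodes of $P$ are pairwise \Mrel and hence lie at a common level, which I call $j$; consequently any defining equation $H$ with $\Ehor[H]=P$ satisfies $\topl(H)\geq j$.

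The hard part is to arrange $\topl(H_1)=j$ on the nose. I expect to achieve this by a top-down procedure applied to $F'$: for each level transition strictly above $j$ at which $F'$ has vertical crossings, \Cref{thm:vertres} produces a linear relation among the periods of the corresponding vanishing cycles, which should be realized by a defining equation supported strictly above level $j$; subtracting a multiple of such an equation lowers the top level of the combination by one while leaving the horizontal crossings unchanged. Iterating down to level $j$ produces the desired $H$ with $\Ehor[H]=P$ and $\topl(H)=j$. Finally, fix any $e\in P$ and set $c:=\langle\beta_F,\lambda_e\rangle/\langle\beta_H,\lambda_e\rangle$, which is well defined since $\langle\beta_H,\lambda_e\rangle\neq 0$. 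Put $H_1:=cH$; then by linearity of the intersection pairing $\langle\beta_{F-H_1},\lambda_e\rangle=0$, while for $e'\in\Ehor[F]\setminus\Ehor[H]$ one still has $\langle\beta_{F-H_1},\lambda_{e'}\rangle=\langle\beta_F,\lambda_{e'}\rangle\neq 0$. Hence $\Ehor[F-H_1]\subseteq\Ehor[F]\setminus\{e\}$, giving the strict decrease that closes the induction.
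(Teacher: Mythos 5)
Your overall strategy is sound and quite close to the paper's: both proofs are inductive, both extract a primitive $M$-correlated subset of $\Ehor[F]$, and both use the scaling trick to strictly decrease $\Ehor$. The one genuine difference is the induction parameter (you use $|\Ehor[F]|$; the paper uses $\#\Ehor[F]+\topl(F)$), but that difference is forced by how the top-level adjustment is handled --- which is precisely where your argument has a gap.

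The gap is in the ``hard part'': arranging $\topl(H_1)=j$. You invoke \Cref{thm:vertres} to build a defining equation that can be subtracted from $F'$ to lower its top level by one, but \Cref{thm:vertres} does not provide what you need. That theorem produces a specific relation $\sum m_{e,i}\langle F,\lambda_e\rangle\int_{\lambda_e}\omega=0$ among periods of vertical vanishing cycles crossing a single level transition. Those vanishing cycles $\lambda_e$ satisfy $\topl(\lambda_e)=\ell(e^-)\le i$, so the relation lives at level at most $i$; it does not have the same top-level restriction as $F'$ (indeed there is no reason its top-level restriction should be non-zero at level $\topl(F')$ at all). Subtracting a multiple of it therefore does not decrease $\topl(F')$. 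What is actually needed is a defining equation $G'$ of $M$ with $(G')_{\topl}=(F')_{\topl}$ that crosses no horizontal nodes; only then does $\topl(F'-G')<\topl(F')$. Producing such a $G'$ is the content of \Cref{lm:non-horiz}, whose proof requires the full degeneration machinery: one constructs a chain of divisorial undegenerations that pinches all of $\Ehor[F]$ first and then a level transition, observes that every defining equation for the resulting boundary stratum is induced from a non-horizontal defining equation of $M$, and then lifts via \Cref{rem:lift}. This is a substantive argument that your sketch does not engage with, and ``which should be realized by a defining equation supported strictly above level $j$'' is exactly the claim that needs to be proved, not assumed. Replacing the appeal to \Cref{thm:vertres} with \Cref{lm:non-horiz} (and iterating it, as you intended) repairs the proof; without it, the ``top-down procedure'' does not close.

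One small slip: to propagate condition (2) you should note $\Ehor[H_j]\subseteq\Ehor[F-H_1]\subseteq\Ehor[F]$ for $j\ge 2$ directly from the inductive hypothesis (the union with $\Ehor[H_1]$ you wrote is harmless but not what the inductive hypothesis gives).
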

Here primitive means that there does not exist a defining equation~$H'$ of~$M$ at~$p$ such that $\emptyset\neq\Ehor[H']\subsetneq\Ehor[H_j]$. This theorem gives a restriction for the form of the defining equations, given the existence of a boundary point of~$M$ with enhanced level graph~$\Gamma$.

Our methods allow us to control the dimensions of $\pMG$ and in particular describe the boundary strata that may contain irreducible components of~$\pM$.  Recall that the codimension $\codim_{\LMS}\DG$ of a boundary stratum is equal to $H(\lG)+L(\lG)$, where $H$ is the number of horizontal edges, and $L$ is the number of levels below zero.

\begin{thm}[Boundary components of~$M$]\label{thm:divisorial}
The general point of any irreducible component of the boundary $\pM$ is  contained in an open boundary stratum $\DG$ such that either $L(\lG)=1$  and $H(\lG)=0$, or such that $L(\lG)=0$.

In the latter case, for any pair of nodes $e_1,e_2\in E(\lG)$, there exist a defining equation $F$ of~$M$ such that $\Ehor[F]=\{e_1,e_2\}$.
\end{thm}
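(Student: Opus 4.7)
Let $N$ be an irreducible component of $\pM$ with generic point $p_0$; by genericity we may assume $\lG$ is the unique level graph whose open stratum $\DG$ contains $p_0$. The proof splits into a classification part (controlling $(H(\lG),L(\lG))$) and a construction part (pair-wise equations in the $L=0$ case).

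\textbf{Part A: Classification of $(H,L)$.} In each ``bad'' case we produce a holomorphic arc $\gamma\colon \Delta\to\oM$ with $\gamma(0)=p_0$ and $\gamma(\tau)\in \DG[\lG']$ for $\tau\neq 0$, where $\lG'$ is strictly less degenerate than $\lG$ (i.e.\ $H(\lG')+L(\lG')<H(\lG)+L(\lG)$). Such an arc contradicts the choice of $p_0$: the irreducible component of $\pM$ containing $\gamma(\tau)$ must coincide with $N$, but its generic point lies in $\DG[\lG']$, not in $\DG$.

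Case (a): $H(\lG)\geq 1$ and $L(\lG)\geq 1$. We smooth all horizontal nodes simultaneously. By \Cref{thm:Mrltd}, $\Ehor[\lG]$ partitions into \Mequiv classes, each contained in a single level, with pairwise proportional vanishing cycle periods within a class. By \Cref{thm:decomp}, every defining equation of $M$ crossing horizontal nodes decomposes as a sum of primitive pieces (each involving one class at a single level) plus a piece with no horizontal crossings. Taking plumbing parameters $t_e=\alpha_e\tau$ with $\alpha_e$ prescribed by the \Mequiv proportionalities, the primitive pieces vanish identically in $\tau$ and the remaining non-horizontal piece is unaffected; this produces an arc landing in $\DG[\lG']$ with $E(\lG')=\Ever[\lG]$.

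Case (b): $H(\lG)=0$ and $L(\lG)\geq 2$. We smooth one of the level transitions, merging two adjacent levels into one. This corresponds to activating the associated level-smoothing parameter while holding the others fixed. By \Cref{thm:vertres}, the vertical vanishing cycles crossing this transition satisfy explicit linear relations in their periods; these relations are preserved by uniformly rescaling the lower-level differential, so the resulting arc stays in $\oM$ and lands in $\DG[\lG']$ with $L(\lG')=L(\lG)-1$.

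\textbf{Part B: Pair-wise equations for $L(\lG)=0$.} Set $m=\codim_{\LMS}M$ and $H=H(\lG)$. Then $\DG$ has codimension $H$ in $\LMS$, and since $N$ is a divisor in $\oM$ it has codimension $m+1-H$ in $\DG$. Thus the local defining equations of $\oM$ at $p_0$, when restricted to $\DG$, yield only $m+1-H$ independent equations; the remaining $H-1$ degrees of freedom must come from relations that are identically zero on $\DG$, i.e.\ binomial relations among the plumbing coordinates $t_{e_1},\dots,t_{e_H}$. Counting dimensions, these $H-1$ binomials define a one-dimensional ray $(t_{e_1},\dots,t_{e_H})=(\alpha_1 t,\dots,\alpha_H t)$ in the boundary toric chart, and since $p_0$ lies in the open stratum $\DG$ all $\alpha_i\neq 0$. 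For each pair $(i,j)$, the projection onto the $(t_{e_i},t_{e_j})$-plane gives a binomial $t_{e_i}^{\alpha_j}=c_{ij}\,t_{e_j}^{\alpha_i}$ with $c_{ij}\neq 0$. Translating logarithmically, and using that for a transversal relative cycle $\beta_e$ at a horizontal node $e$ the period $\int_{\beta_e}\omega$ equals $-R_e\log(t_e)/(2\pi i)$ plus regular terms, this binomial becomes a linear defining equation of the form
\[
F_{ij} \= \alpha_j R_{e_i}^{-1}\int_{\beta_{e_i}}\omega \m \alpha_i R_{e_j}^{-1}\int_{\beta_{e_j}}\omega \+ (\text{terms crossing no node}) \= 0,
\]
with $\Ehor[F_{ij}]=\{e_i,e_j\}$, as required.

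\textbf{Main obstacle.} The hardest step is justifying the smoothing arcs in Part A: \Cref{thm:Mrltd,thm:vertres,thm:decomp} give the required linear-algebraic constraints on defining equations, but promoting these constraints to genuine holomorphic arcs in $\oM$ (rather than merely solving the linear equations to first order) requires the full binomial/plumbing description of $\oM$ near $\pM$ that is the main structural output of the paper. The pair-wise construction in Part B rests on the same local toric description.
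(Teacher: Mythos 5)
Your proposal is logically circular within the architecture of the paper. Theorem~\ref{thm:divisorial} is proved \emph{first} in Section~\ref{sec:lineq}; Theorems~\ref{thm:Mrltd}, \ref{thm:vertres}, and~\ref{thm:decomp} are all proved afterward and depend on it (the proof of Theorem~\ref{thm:Mrltd} explicitly opens with ``use \Cref{thm:divisorial} recursively,'' the proof of Theorem~\ref{thm:vertres} invokes it to locate the generic point of a divisor, and the decomposition in Theorem~\ref{thm:decomp} rests on \Cref{lm:non-horiz}, which uses \Cref{rem:chain}, which uses \Cref{thm:divisorial}). Your Part~A invokes all three of these theorems to justify the smoothing arcs, and your Part~B rests squarely on the binomial plumbing description from \Cref{thm:localequations} and \Cref{prop:eqsquantitative} in Section~\ref{sec:transverse}, which are even further downstream. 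You flag this dependence honestly in the ``Main obstacle'' paragraph, but it is not a repairable technicality: you cannot derive Theorem~\ref{thm:divisorial} from results whose proofs presuppose it. There is also a subsidiary gap in Part~A: a holomorphic arc $\gamma\colon\Delta\to\oM$ through $p_0$ lies in \emph{some} local irreducible component of $\pM$ through $p_0$, but that component need not be the chosen $N$, so the arc does not immediately contradict the genericity of $p_0$ in $N$.

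The paper's actual proof is much more elementary and constructs no smoothing arcs at all. It uses only the codimension identity~\eqref{eq:codim} together with the monodromy argument \Cref{prop:monodromy} (imported from~\cite{fred}). Since the twisted differential has a simple pole at every horizontal node, an equation crossing a single horizontal node would force the corresponding residue to vanish, which is impossible; hence every \thc element of the rref basis crosses at least two horizontal nodes. Since each rref row has a distinct pivot node and the other crossed nodes cannot be pivots, this gives $c(\lG')\le H(\lG')-1$ whenever $H(\lG')\ge 1$. Substituting into $H(\lG')+L(\lG')+m-c(\lG')=m+1$ forces either ($H=0$, $L=1$) or ($L=0$, $H=c+1$). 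The pairwise statement then falls out of the explicit rref matrix: there is exactly one non-pivot column, each lost equation crosses precisely its pivot and that single extra node, and linear combinations of the rref rows yield equations crossing any prescribed pair. Your Part~B dimension count ($H = c+1$, equivalently $H-1$ ``binomial degrees of freedom'') is morally the same arithmetic, but packaged via the later toric structure rather than the rref linear algebra, and so it cannot serve as the foundational step the paper needs it to be.
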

We note that in the latter case it follows that $e_1$ and $e_2$ are \Mrel and thus by~\Cref{thm:Mrltd} the periods over the two corresponding vanishing cycles are proportional on~$M$. What the Theorem shows is that for divisorial degenerations there moreover exists a defining equation that only crosses this pair of nodes (see~\Cref{thm:AIM} also for the related results for affine invariant manifolds in the minimal stratum).

Enumerating strata as above that could contain irreducible components of~$\pM$ is easy, and one can envision applying this to rule out the existence of certain linear subvarieties via degeneration analysis. Our most precise technical result in studying the equations and stratification of linear subvarieties is~\Cref{prop:eqsquantitative}, which gives the coefficients of defining equations, starting from the basis for defining equations of~$M$ at~$p$ taken in \rref with respect to a suitable homology basis.

Our proof will in fact yield a more general statement than the theorem above: for any linear subvariety of any boundary stratum $\DG$, the general points of its irreducible boundary components are obtained in the strata $\DG[\Gamma']$ where $\Gamma'$ is a purely horizontal or purely divisorial degeneration of $\Gamma$, obtained from $\Gamma$ either by introducing one new vertical level, or by introducing a new collection of cross-related horizontal edges. This will allow us to recursively apply this theorem and thus navigate the boundary stratification of a linear subvariety.

\subsection*{The analytic structure near the boundary of linear subvarieties}
Our next set of results provide some more detailed information about the geometry of a linear subvariety near its boundary. We recall that~$M$ is an immersed subvariety of the stratum, not an embedded one, and at its singular points we can only say that its local irreducible components are given by linear equations (this is simply to say that locally~$M$ looks like a finite union of linear subspaces). Similarly, when working with the closure $\oM$ near its boundary point $p_0\in\pM$, we will work separately with the local irreducible components~$\overline{Z}$ of $\oM$ at $p_0$ and denote $Z=\overline{Z}\cap M$.

Period coordinates on a stratum do not extend to the boundary, instead we have analytic {\em plumbing coordinates} in a neighborhood of the boundary. Using the precise information on the coefficients of defining equations for linear subvarieties obtained in \Cref{prop:eqsquantitative}, we can explicitly convert the linear equations in period coordinates into holomorphic equations in plumbing coordinates.
\begin{thm}\label{thm:localequations}
Let~$M$ be a linear subvariety and let $p_0\in\pM$. The local analytic equations for a local irreducible component $\overline{Z}$ of $\oM$ near $p_0$ can be computed explicitly from the defining equations of~$M$ at a smooth point of $Z$.  Analytically locally, $\overline{Z}$ is isomorphic to the product of $\CC^n$ and varieties defined by binomial equations. In particular, $Z$ is locally isomorphic to a (not necessarily normal) toric variety (see ~\eqref{eq:convert} and~\eqref{eq:binomial}).
\end{thm}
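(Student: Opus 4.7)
The plan is to take a basis of defining equations of~$M$ at a smooth point of~$Z$ in the \rref form provided by~\Cref{prop:eqsquantitative}, substitute the explicit expansion of period coordinates in terms of plumbing coordinates, and verify that the resulting holomorphic equations each collapse to a single binomial. The structure theory of binomial ideals then yields the toric conclusion.

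First I would record the plumbing coordinate description near $p_0\in\pMG$: coordinates consist of period coordinates on the boundary stratum~$\DG$, one smoothing parameter~$s_e$ for each horizontal edge $e\in\Ehor$, and one level parameter~$\tbr[i]$ for each negative level~$i$. In these coordinates, for a horizontal edge~$e$ at level~$i$ the period $\int_{\van[e]}\omega$ is expressed as $s_e$ times the cumulative level scaling $\prod_{j>i}\tbr[j]$ and a holomorphic unit, while for a vertical edge~$e$ the period $\int_{\van[e]}\omega$ equals the residue of~$\eta$ at the upper pole times the monomial $\prod_{\lbot[e]<i\le\ltop[e]}\tbr[i]$, up to a non-vanishing holomorphic factor; cf.~\cite[Sec.~4]{euler}. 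Recording this substitution rule as~\eqref{eq:convert} converts every relative period appearing in a defining equation into an explicit holomorphic function of plumbing coordinates.

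I would then apply~\Cref{thm:decomp} to write each row of the \rref basis as a sum of a horizontal equation~$H$ with $\Ehor[H]$ contained in a single \Mequiv class at one level, plus a vertical equation~$G$ with $\Ehor[G]=\emptyset$. For a horizontal~$H$ crossing the class $\{e_1,\dots,e_r\}$, \Cref{thm:Mrltd} asserts that all the periods $\int_{\van[e_j]}\omega$ are pairwise proportional on~$M$, and the proportionality constants are precisely the non-pivot coefficients of~$H$ in \rref; substituting~\eqref{eq:convert} and dividing by the common factor $\prod_{j>i}\tbr[j]$ therefore reduces~$H$ to a binomial of the form $s_{e_a}-c\cdot s_{e_b}=0$ for the pivot node $e_a$ and each non-pivot $e_b$ in the class. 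For a vertical~$G$, the coefficients furnished by~\Cref{prop:eqsquantitative} at each level transition combine residue data so that after substitution of~\eqref{eq:convert}, $G$ telescopes into an equation $m_1-c\cdot m_2=0$ where $m_1,m_2$ are monomials in the $\tbr[\cdot]$'s multiplied by holomorphic units from residue coordinates; this binomial form I would display as~\eqref{eq:binomial}.

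The main obstacle is precisely this binomial collapse: verifying that the explicit coefficients from~\Cref{prop:eqsquantitative}, once substituted via~\eqref{eq:convert}, produce exactly two non-zero monomials per equation rather than a genuine trinomial or higher. This rigidity is forced by the \rref pivot structure combined with~\Cref{thm:decomp}, which isolates each pivot into a single level or a single \Mequiv class, and with~\Cref{thm:Mrltd,thm:vertres}, which fix the coefficient ratios so precisely that only one monomial survives on each side after conversion. Once this is established, the local ideal of the irreducible component $\overline{Z}$ in the plumbing coordinate ring is generated by binomials; the plumbing coordinates appearing in no equation factor out as a free~$\CC^n$, and the quotient by the remaining binomial ideal is, by the standard structure theory of binomial ideals due to Eisenbud--Sturmfels, the local model of a (not necessarily normal) affine toric variety, completing the proof.
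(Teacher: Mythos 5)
There is a genuine gap here, and it lies exactly at the point you flag as the ``main obstacle.'' Your substitution rule for the horizontal side is wrong: for a horizontal edge $e$, the period $\int_{\lambda_e}\omega$ is (up to $2\pi i$) the residue of the twisted differential at that node, which is a nonzero period coordinate \emph{on the boundary stratum} and does not tend to $0$ -- it is not of the form $s_e$ times a unit times level scalings. The degenerating quantity associated with a horizontal node is the period over a curve crossing the cylinder (the $\delta$'s in the $\lG$-adapted basis), and that period does not admit a polynomial-in-$s_e$ expansion; it has a \emph{logarithmic} divergence, $\int_\delta\omega \sim r_e(\omega)\ln s_e + (\text{holomorphic})$, which is encoded in the log periods $\logP$ of~\cite{fred} (cf.~\Cref{prop:logp}). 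Because of this, the binomial collapse cannot be achieved by substituting a local monomial expression for each period and cancelling a common factor: what actually happens is that the rref equation, rewritten in log periods, becomes a holomorphic function $L(b)$ plus a sum $\int_{\lambda_1}\omega\cdot\sum_l q_l \ln s_l$ with rational $q_l$ (using the precise proportionalities of~\Cref{thm:Mrltd} and~\Cref{prop:eqsquantitative}), and the binomial only appears after clearing denominators and \emph{exponentiating} the whole equation, giving $e^{f(b)} s^I - s^J = 0$. The pivot structure then lets one absorb the $e^f$ factor by a holomorphic change of the pivot plumbing variable, yielding a genuine binomial in the new coordinates. None of that is a formal consequence of the rref structure together with \Cref{thm:decomp}: without the logarithm-to-exponential step, there is simply no binomial to be found.

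Two further remarks. First, you do not need to invoke \Cref{thm:decomp} at all once you use \Cref{prop:eqsquantitative}(1): each rref basis equation $F_l$ crosses horizontal nodes only at its top level and its crossed set is $M$-primitive (see the proof of~\Cref{lm:Mrref}), so each $F_l$ is already either a \hce or a \nhce without any decomposition. Second, your appeal to Eisenbud--Sturmfels is too quick. The ideal generated by the converted equations is a binomial ideal but is in general not prime, and a local irreducible component of a non-prime binomial ideal is not automatically the vanishing locus of a binomial prime (the paper explicitly notes this after citing \cite{SturmfelsI}). The paper therefore constructs the embedded torus $(\CC^*)^n$ directly, using the fact that each pivot plumbing variable appears in exactly one equation $H_k$ and can be solved for. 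Your proof as written would need to either run this explicit torus construction or supply the (nontrivial) primary-decomposition argument that identifies the component containing $p$ and shows it is a translated lattice ideal.
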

The strength of this result is that it allows us to describe the local structure of $\overline{Z}$ and of $\partial Z$ near $p_0$ very precisely.
Recall that an open stratum $\DG\cap U$ is contained in the closure of the open stratum $\DG[\lG']\cap U$ if and only if $\lG'$ is an undegeneration of~$\lG$. Any undegeneration is a composition of a {\em horizontal undegeneration}, which contracts some collection of horizontal edges of~$\lG$, and a {\em vertical undegeneration}, which contracts a number of level transitions in~$\lG$. The local defining equations allow us to show that certain such undegenerations occur in~$M$.
\begin{thm}\label{thm:transverse}
If $\pMG$ is non-empty, i.e.~if $p_0\in\pM\cap\DG$, then for any~$\lG'$ obtained from~$\lG$ by a composition of a vertical undegeneration, and of a horizontal undegeneration that smoothes some collection of \Mequiv classes, the intersection $\pMG[\lG']=\pM\cap\DG'$ is also non-empty.
\end{thm}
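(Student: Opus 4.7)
Fix $p_0\in\pM\cap\DG$ and let $\overline{Z}$ be a local irreducible component of $\oM$ at $p_0$. By \Cref{thm:localequations}, in a small neighborhood $U$ of $p_0$ in $\LMS$ the component $\overline{Z}\cap U$ is analytically isomorphic to a product $\CC^n\times V$, where $V$ is an affine toric variety cut out by binomial equations in the plumbing parameters: one parameter $t_e$ for each horizontal edge $e\in\Ehor$ and one parameter $\prodt[\lceil i\rceil]$ for each vertical level transition of $\lG$. The stratum $\DG\cap U$ is the common vanishing locus of all these plumbing parameters, and for an undegeneration $\lG'$ of $\lG$ the open stratum $\DG[\lG']\cap U$ is the locus on which exactly those plumbing parameters associated to edges and transitions smoothed in passing from $\lG$ to $\lG'$ are non-zero. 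The plan is to exhibit an analytic family in $\overline{Z}\cap U$ approaching $p_0$ whose generic member has exactly that vanishing pattern, built directly from the binomial description of $V$.

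Factor the prescribed undegeneration $\lG\leadsto\lG'$ as a horizontal undegeneration smoothing a union $C_1\sqcup\dots\sqcup C_s$ of \Mequiv classes, composed with a vertical undegeneration contracting a set $I$ of level transitions; by \Cref{thm:Mrltd} each $C_j$ lies at a single level. \Cref{thm:decomp} asserts that every defining equation of $M$ at a smooth point decomposes into summands whose horizontal support lies in a single \Mequiv class. Converting such equations into binomial form via \Cref{thm:localequations} therefore shows that the coordinates of the toric cone describing $V$ split into blocks indexed by the \Mequiv classes (for the horizontal parameters) and by the individual level transitions (for the vertical parameters), and that no binomial relation mixes distinct horizontal blocks. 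Consequently, activating any union of horizontal blocks $C_1\sqcup\dots\sqcup C_s$ (setting the remaining horizontal $t_e$ to zero) together with any subset $I$ of vertical parameters (setting the other vertical parameters to zero) defines a face of the toric cone; non-emptiness of the associated open toric stratum is witnessed, on each class $C_j$, by the explicit one-parameter scaling solution furnished by the residue proportionality of \Cref{thm:Mrltd} together with the coefficient description \Cref{prop:eqsquantitative}, and, on each chosen level transition, by the plumbing construction of~\cite{BCGGMmsds} applied in the direction of the transition (the linearity in boundary period coordinates of the defining equations of $M$ on $\DG$ established in~\cite{fred} guarantees that the deformation remains in $M$). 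Any point of this stratum lies in $\overline{Z}\cap U\cap\DG[\lG']$, producing the desired point of $\pMG[\lG']$.

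The principal obstacle is verifying that the binomial equations provided by \Cref{thm:localequations} respect the combinatorial block decomposition described above, so that activating a union of entire \Mequiv classes is genuinely a face of the associated toric cone and no hidden binomial forces a parameter we intend to keep zero to be non-zero or vice versa. The decomposition \Cref{thm:decomp} is designed precisely to effect this separation between distinct \Mequiv classes, and the quantitative coefficient description \Cref{prop:eqsquantitative} exhibits the monomial exponents explicitly, making it transparent that solutions on each block can be scaled independently. Once this combinatorial compatibility is checked the theorem reduces to a direct toric computation, and the required boundary point of $M$ is produced.
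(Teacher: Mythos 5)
Your approach is essentially the same as the paper's: both rest on the product decomposition coming from \Cref{thm:localequations} (equation~\eqref{eq:prod}), namely that $\overline{Z}$ factors as a smooth piece $Y$ in the non-horizontal coordinates $\underline{y}$ times one binomial factor $X_l$ for each \Mequiv class, and both activate the factors independently to build a point in the desired open boundary stratum $\DG[\lG']$.

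There are, however, a few imprecisions worth fixing. First, you describe $V$ as a binomial variety ``in the plumbing parameters: one parameter $t_e$ for each horizontal edge and one parameter $\prodt[\lceil i\rceil]$ for each vertical level transition.'' But in the paper's decomposition the rescaling parameters $\underline{t}$ sit inside the $\underline{y}$-variables of the \emph{smooth} factor $Y$, which is cut out by the non-binomial equations $G_k$; the binomial equations $\underline{H}_l$ only involve the (rescaled) horizontal plumbing coordinates $\underline{x}$. Your picture would in principle allow a binomial relation tying a $t_i$ to a horizontal coordinate, which would obstruct activating the vertical transition independently; ruling this out is exactly what the paper's change of variables and the product \eqref{eq:prod} accomplish. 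Second, for the vertical undegenerations you appeal to ``the plumbing construction applied in the direction of the transition'' and to linearity of equations on $\DG$, but what is actually needed is the sharper statement (proved in \Cref{cor:smooth}) that the Jacobian of $G_1,\dots,G_u$ has its pivots among the periods $\int_{\alpha^{(j)}_l}\eta$ rather than among the $t_i$, so that $(t_{-1},\dots,t_{-L(\lG)})$ extends to a local coordinate system on $Y$. Your phrasing gestures at this but does not establish it. Third, your non-emptiness argument for each horizontal block (``one-parameter scaling solution furnished by the residue proportionality'') is more roundabout than necessary: since the nearby smooth point $p\in M\cap U$ has all $\underline{x}_l(p)$ non-zero, the paper simply forms the product point whose $\underline{x}_l$-coordinates equal $\underline{x}_l(p)$ on the activated classes and vanish elsewhere; this already lies in $\overline{Z}$, with no further limiting argument needed. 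None of these points amount to a fatal gap given \Cref{thm:localequations}, but each should be tightened.
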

In~\cite{euler} it is shown that the union $D^\ver$ of all open boundary strata $\DG$ of $\LMS$ such that $\lG$ does not have horizontal nodes has simple normal crossings in $\LMS$. The above theorem says that in particular $\oM$ is generically transverse to $D^\ver$; a more precise statement is the following Corollary of~\Cref{thm:localequations}.
\begin{cor}\label{cor:eqsm}
For any local irreducible component $\overline{Z}$ of $\oM$ at $p_0$, if none of the defining equations of $Z$ cross any horizontal nodes, then $\overline{Z}$ is smooth and $\partial Z\subset\overline{Z}$ is normal crossing.
\end{cor}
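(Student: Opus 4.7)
The plan is to invoke \Cref{thm:localequations} at $p_0$ and read off both conclusions from the explicit local analytic description it provides. Applying the theorem, the variety $\overline{Z}$ is cut out in a polydisk neighborhood by the equations obtained from the defining equations of $Z$ via the conversion formula~\eqref{eq:convert}, and locally $\overline{Z}$ is a product of $\CC^n$ with a variety defined by the binomial equations~\eqref{eq:binomial}. The structural fact I would then emphasize, which is built into \Cref{thm:localequations} via \Cref{prop:eqsquantitative} and \Cref{thm:Mrltd}, is that the binomial equations~\eqref{eq:binomial} encode precisely the proportionality of periods over \Mrel horizontal vanishing cycles, and so they are produced solely by those defining equations of $Z$ that cross horizontal nodes. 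A defining equation that crosses no horizontal nodes contributes to~\eqref{eq:convert} a single holomorphic equation whose linear part in plumbing and period coordinates is non-degenerate, and so cuts out a smooth hypersurface.

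Under the hypothesis of the corollary, every defining equation of $Z$ avoids horizontal nodes. Consequently, no binomial factor appears in the local description of $\overline{Z}$, and the product decomposition of \Cref{thm:localequations} reduces to a copy of $\CC^{\dim\overline{Z}}$. More precisely, $\overline{Z}$ is cut out near $p_0$ by a finite collection of holomorphic equations whose linear parts are linearly independent, the independence being inherited from the fact that the defining equations of $Z$ were chosen in \rref, as in \Cref{prop:eqsquantitative}. This forces $\overline{Z}$ to be smooth at $p_0$, with a suitable subset of the plumbing and period coordinates providing an analytic chart.

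To obtain the normal crossing statement for $\partial Z$, I would use that in the plumbing chart on $\LMS$ around $p_0$ the boundary $\partial\LMS$ is the simple normal crossing union of the coordinate hyperplanes $\{t_i = 0\}$ for each vertical level transition and $\{s_e = 0\}$ for each horizontal node of $\lG$. Because the defining equations of $\overline{Z}$ do not tie any $s_e$ to another coordinate by a binomial relation, and involve each $t_i$ only up to a non-degenerate linear term, the restriction of each such hyperplane to $\overline{Z}$ is a smooth coordinate hyperplane in the smooth variety $\overline{Z}$, and the restrictions remain pairwise transverse. Their union is the divisor $\partial Z = \overline{Z}\cap\partial\LMS$, which is therefore simple normal crossing in $\overline{Z}$. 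In fact this transversality of $\overline{Z}$ with the vertical boundary divisors is already recorded by \Cref{thm:transverse}.

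The main obstacle is the first paragraph: one has to verify carefully, from the explicit coefficients produced by \Cref{prop:eqsquantitative} and the conversion formula~\eqref{eq:convert}, that defining equations avoiding horizontal nodes really do yield a system of smooth holomorphic equations with linearly independent linear parts, and in particular cannot conspire to generate a hidden binomial relation among the horizontal smoothing coordinates. Once this bookkeeping is in place, both the smoothness of $\overline{Z}$ and the normal crossing property of $\partial Z$ follow from the standard geometry of smooth subvarieties of polydisks meeting a coordinate hyperplane arrangement transversally.
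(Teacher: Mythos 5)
Your proposal is correct and follows essentially the same route the paper takes, deriving everything from the explicit plumbing-coordinate description of \Cref{thm:localequations} (more precisely from the proof of \Cref{cor:smooth}, which records exactly the smoothness and transversality you need). The ``main obstacle'' you flag at the end---that non-horizontal defining equations really do produce a system with independent linear parts and cannot conspire to produce hidden binomial relations among the horizontal smoothing parameters---is handled in the paper by \Cref{prop:logp}: for a non-horizontal class the log period equals the period of the limiting twisted differential plus a function vanishing on $\DG$, so the Jacobian of the converted equations $G_1,\dots,G_u$ with respect to the $\underline{y}$-coordinates is in \rref with the same pivots as the original linear equations; in particular the horizontal plumbing parameters $\underline{x}$ and the scaling parameters $\underline{t}$ are unconstrained and extend to a local coordinate system on $\overline{Z}$, which gives both smoothness and the normal crossing property of $\partial Z$.
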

All periods of exact differentials over vanishing cycles are equal to $0$, and thus horizontal nodes cannot arise when these are degenerated. This provides a very interesting situation where all defining equations are non-horizontal. The case of the double ramification locus will be treated by the first author in~\cite{fredDR}. In this paper we consider the Hurwitz spaces of covers: the spaces of branched covers $f:X\to\PP^1$ with prescribed branching over a number of points (see~\eqref{eq:Hurwdefine} for a precise definition). In~\Cref{prop:HW} we use exact differentials to realize Hurwitz spaces as linear subvarieties of the strata and to construct a {\em smooth} compactification of Hurwitz spaces.

\subsection*{Cylinder Deformation Theorem}
Given a boundary point $p_0\in\pMG$ and a node $e\in E(\lG)$, it is impossible to choose a relative homology cycle crossing $\van[e]$ that varies continuously in~$U$, as there is non-trivial monodromy. From the point of view of flat geometry, however, for any flat surface near $p_0$ one can naturally choose a ``long cylinder" around the vanishing cycle $\lambda_e$. Wright has proven the fundamental Cylinder Deformation Theorem, describing geometrically the types of deformations that can appear in affine invariant manifolds.  We give a fundamentally new proof in a somewhat more general context.

Recall that two cylinders on a flat surface are called {\em parallel} if the periods of their circumference curves are real multiples of each other. For an affine invariant manifold $M\subseteq \omodulin(\mu)$, cylinders $C_1,C_2\subset X\in\calM$ are called {\em $M$-parallel} if they are parallel on $X$, and parallel for all flat surfaces in a neighborhood of~$X$ in~$M$. An equivalence class of $M$-parallel cylinders on~$X$ is a maximal collection $\calC=C_1,\dots,C_d$ of cylinders on $X$ that are pairwise $M$-parallel. The original intuition for the Cylinder Deformation Theorem arose from the idea that affine invariant manifolds should be algebraic subvarieties (unproven at the time), which restricts the type of linear equations that are possible.  However, the original proof used quite different methods, relying on deep results of Minsky-Weiss \cite{miwe} and Smillie-Weiss \cite{smwe} on the dynamics of the horocycle flow.  Our proof follows the strategy of the original intuition.

\begin{thm}[{Cylinder Deformation Theorem}] \label{thm:cyldeformation}
Let~$M$ be an algebraic subvariety of a \emph{meromorphic} stratum cut out by linear equations in period coordinates with {\em real} coefficients.  Let $\calC=\{C_1,\dots,C_d\}$ be an equivalence class of $M$-parallel horizontal cylinders on some~$(X,\omega)\in M$. Then for any $t,s\in\RR$ the flat surface $a_t^\calC u_s^\calC(X,\omega)$ obtained by applying to each~$C_i$ the matrix
$$
 a_t\circ u_s \hbox{ with }a_t=\left(\begin{smallmatrix} 1&0\\ 0& e^{t}\end{smallmatrix}\right), u_s=\left(\begin{smallmatrix} 1&s\\ 0& 1\end{smallmatrix}\right),
$$
and leaving the rest of the flat surface unchanged, is also contained in~$M$.
\end{thm}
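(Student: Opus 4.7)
The strategy is to show that the infinitesimal generators of the deformations $u_s^{\calC}$ and $a_t^{\calC}$ at $s = t = 0$ lie in the tangent space to~$M$, and then integrate. After reducing to a smooth point of~$M$, a direct computation shows that $a_t^{\calC}\circ u_s^{\calC}$ shifts the chimney period $\int_{\gamma_i}\omega = x_i + i h_i$ across $C_i$ by $s h_i + i(e^t - 1)h_i$, while leaving periods of cycles disjoint from $\calC$ unchanged. Hence for a relative cycle $\beta$ with intersection numbers $n_i := \langle\beta,\ell_i\rangle$ against the horizontal core curves, $\int_\beta\omega$ shifts by $\bigl(s + i(e^{t}-1)\bigr)\sum_i n_i h_i$, and the theorem reduces to proving the real-linear identity
\[
\sum_{i=1}^d \langle\beta,\ell_i\rangle\, h_i \;=\; 0
\]
for every $\beta$ underlying a real-linear defining equation of~$M$ at $(X,\omega)$.

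To establish this identity, I would first degenerate $(X,\omega)$ inside~$\oM$ to a boundary point $p_0 \in \pMG$ whose enhanced level graph $\Gamma$ has $d$ horizontal nodes $e_1,\dots,e_d$ with vanishing cycles $\lambda_{e_i} = [\ell_i]$, obtained by proportionally shrinking the $M$-parallel circumferences $c_i$. Such $p_0$ exists in~$\oM$: the only linear constraints on the $c_i$ on~$M$ are the $M$-parallel relations $c_j = \mu_j c_1$, which are preserved along the proportional shrinking, so by real-algebraicity of~$\oM$ inside~$\LMS$ the limit of the shrinking family lies in $\oM$. By~\Cref{thm:Mrltd}, the set $\{e_1,\dots,e_d\}$ is a union of $M$-equivalence classes of nodes. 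Using~\Cref{thm:decomp}, decompose each defining equation $F = \int_\beta\omega$ at $p_0$ as $F = H_1 + \dots + H_N + G$, with $G$ crossing no horizontal node (so $\langle\beta_G,\ell_i\rangle = 0$ for all $i$, trivially fulfilling the desired identity for~$G$) and each $H_j$ primitive on one $M$-equivalence class $S_j \subseteq \{e_1,\dots,e_d\}$.

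The crucial ingredient is~\Cref{thm:localequations}: locally near $p_0$, $\oM$ is cut out in plumbing coordinates by binomial equations $\prod_{i\in S_j} q_{e_i}^{a_i} = \prod_{i\in S_j} q_{e_i}^{b_i}$, one per primitive $H_j$, and~\Cref{prop:eqsquantitative} identifies the exponent difference $a_i - b_i$ with $c_i\, n_i^{H_j}$ up to a common rescaling. Writing the plumbing coordinate as $q_{e_i} = \exp(2\pi i \tau_i)$ with $\tau_i = \theta_i + i m_i$ (twist plus $i$ times modulus $m_i = h_i/c_i$), the binomial equation is equivalent to $\sum_{i\in S_j}(a_i - b_i)\, m_i \equiv 0$ on~$\oM$, which under the identification rewrites as $\sum_{i\in S_j} n_i^{H_j}\, h_i = 0$. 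Since $(X,\omega)$ lies in the region where these binomial relations are valid, the identity holds at $(X,\omega)$ as well, and summing over $j$ gives $\sum_i n_i h_i = 0$, completing the argument.

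The main obstacle I expect is (i)~verifying that the degenerated boundary point $p_0$ actually lies in~$\oM$, which should follow from real-algebraicity of~$\oM$ in the multi-scale compactification together with the $M$-parallel structure of~$\calC$; and (ii)~carefully matching the normalizations in~\Cref{prop:eqsquantitative} so that the binomial exponents really are $c_i\, n_i^{H_j}$. Once these are in place, the rest is formal: the toric/multiplicative relation in plumbing coordinates translates into the additive real-linear relation on the heights $h_i$, and the tangent vectors $\xi^{\mathrm{hor}}, \xi^{\mathrm{ver}}$ are thereby shown to be tangent to~$M$, so integrating them gives $a_t^{\calC} u_s^{\calC}(X,\omega) \in M$ for all $s,t\in\RR$.
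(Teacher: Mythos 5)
Your reduction of the theorem to the real-linear identity $\sum_{i} \langle\beta,\ell_i\rangle\, h_i = 0$ is correct, and it matches the paper's key claim $\Im\,\beta_j(Y)=0$ (the two are equivalent by taking imaginary parts of $H_j(Y)=\beta_j(Y)+\sum_i c_{i,j}\delta_i(Y)=0$). But your proposed mechanism for establishing this identity has a genuine gap.

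First, the degeneration ``proportionally shrinking the $M$-parallel circumferences $c_i$'' is not obviously a path in $M$: at finite $(X,\omega)$, shrinking all circumferences in $\calC$ while fixing the rest of the surface is precisely $a_{-t}^{\calC}$, which is the cylinder deformation whose invariance you are trying to prove. The paper avoids this circularity by flowing with the \emph{global} $a_t\in\GL^+(2,\RR)$, which preserves $M$ for free, then rescaling by $r_n\in\CC^*$ to land near a boundary point $p_0$. You need an $M$-invariant degeneration, and your shrinking family has not been shown to be one.

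Second, and more seriously, the translation ``binomial relation $\Rightarrow$ additive identity on moduli'' is not valid as stated. \Cref{thm:localequations} does not produce pure binomials $\prod q_{e_i}^{a_i}=\prod q_{e_i}^{b_i}$ in the plumbing parameters: the equation is $e^{f(b)}s^I - s^J=0$, and the binomial form $x^I=x^J$ in \eqref{eq:binomial} is only reached after the analytic coordinate change $x_{l(k)}^{(i)}:=e^{f_{j_k}(b)}s_{l(k)}^{(i)}$. Consequently, writing $q=\exp(2\pi i\tau)$ and taking imaginary parts of $\sum(a_i-b_i)\tau_i$ does \emph{not} give $\sum(a_i-b_i)m_i=0$ on $M$; it gives $\sum(a_i-b_i)m_i = \tfrac{1}{2\pi}\Re f(b)$, and $\Re f(b)$ has no reason to vanish at the interior point $(X,\omega)$. (Also, $\Im\tau = -\tfrac{1}{2\pi}\ln|q|$ is the modulus of the plumbing annulus, which differs from the geometric modulus $h_i/c_i$ of the flat cylinder by a bounded error.) The term $\Re f(b)$ is exactly what the paper's proof has to suppress: the argument divides the relation \eqref{eq:imag} by $r_n e^{t_n}$ and sends $n\to\infty$ along the $a_t$-orbit, so the bounded contribution from $\beta_j$ (the homological incarnation of $\Re f$) washes out and one recovers the exact identity at the fixed surface $Y$. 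Your proposal replaces this careful limiting step with an asserted exact equivalence that does not hold.

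The rest of the architecture --- invoking \Cref{thm:decomp} to split $F$ into primitive horizontal pieces $H_j$ plus a non-crossing $G$, and \Cref{thm:Mrltd} to see that each $H_j$ crossing some $C_i\in\calC$ crosses only cylinders in $\calC$ --- agrees with the paper's proof. The normalization $a_i-b_i\propto c_i\,n_i^{H_j}$ you flag as a concern is in fact correct (it follows from \eqref{eq:constprop} and $A_l=\langle F,\lambda_l\rangle$), so that particular worry is unfounded. What you are missing is the asymptotic argument that converts the approximate multiplicative relation near the boundary into the exact additive relation at $(X,\omega)$; without it, the step ``the identity holds at $(X,\omega)$ as well'' does not follow.
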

The above is a generalization of \cite[Theorem 5.1]{wright}, where it is assumed that~$M$ is an affine invariant manifold (living in a \emph{holomorphic} stratum). We stress that to prove our results, we use degeneration techniques, only working near the boundary $\pM$ in $\LMS$; note, though, that the theorem above applies at {\em any} point of~$M$, not necessarily close to its boundary. Essentially what happens is that we know that~$M$ is cut out by linear equations near every of its points, and by analyzing the behavior of these equations near $\pM$ we obtain sufficiently many necessary conditions on these equations in order to control deformations at every point of~$M$.


\subsection*{ The linear equations of affine invariant manifolds}
Affine invariant manifolds are linear subvarieties of \emph{holomorphic} strata, with all equations having \emph{real} coefficients. Equivalently, by the foundational results of Eskin-Mirzakhani-Mohammadi, combined with the result of Filip, these are the (topological) closures of orbits of the $\GL^+(2,\RR)$ action on the holomorphic strata. These $\GL^+(2,\RR)$ orbits come up naturally in the study of billiards on rational polygons and the Teichm\"uller geodesic flow.

In this more restricted context of most interest we are able to obtain further information, similar to some results of Mirzakhani-Wright \cite{miwr}.  Of fundamental importance for us is the result of Avila, Eskin, M\"oller~\cite{aem} that for affine invariant manifolds, the tangent space projected to absolute homology is symplectic. This gives a way to use our precise understanding of relations among periods over vanishing cycles, given by~\Cref{thm:Mrltd} to obtain further results on \thc equations. Our strongest result in this direction is for affine invariant manifolds in the minimal stratum:

\begin{thm}\label{thm:AIM}
If $M$ is an affine invariant manifold in the minimal stratum ~$\Omega\calM_{g,1}(2g-2)$, then
\begin{enumerate}
\item The space of defining equations of~$M$ is spanned by defining equations that cross at most two horizontal nodes.
\item The space of defining equations of~$M$ that are linear combinations of periods over horizontal vanishing cycles is spanned by defining equations that are pairwise proportionalities of vanishing cycles.
\label{item:cross}
\end{enumerate}
\end{thm}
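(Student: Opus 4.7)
The starting point is that in the minimal stratum there is a single marked point $z$, so relative homology $H_1(X,\{z\};\CC)$ coincides with absolute homology $H_1(X;\CC)$. By the Avila--Eskin--M\"oller theorem the tangent space $T_pM$ is symplectic; equivalently, the space $W$ of defining equations of $M$ at $p$ is a symplectic subspace of $H_1(X;\CC)$ for the intersection pairing. The horizontal vanishing cycles $\{\lambda_e\}_{e\in \Ehor}$ are loops around pairwise disjoint nodes with zero pairwise intersection, and span an isotropic subspace $V\subset H_1(X;\CC)$. The plan is to combine these features with Theorem~\ref{thm:Mrltd}, the decomposition Theorem~\ref{thm:decomp}, and the recursive form of Theorem~\ref{thm:divisorial}.

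\textbf{Part (1).} By Theorem~\ref{thm:decomp} it suffices to show that any primitive defining equation $H$ with $\Ehor[H]=\{e_1,\dots,e_m\}$ has $m\le 2$. Assume $m\ge 3$; by Theorem~\ref{thm:Mrltd} all the $e_i$ lie in a single $M$-equivalence class at a common level. Apply Theorem~\ref{thm:transverse}---performing every vertical undegeneration and smoothing every $M$-equivalence class other than $\{e_1,\dots,e_m\}$---to produce an enhanced level graph $\Gamma'$ with $L(\Gamma')=0$, $\Ehor(\Gamma')=\{e_1,\dots,e_m\}$, and $\pM\cap D_{\Gamma'}$ non-empty. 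Then apply the recursive form of Theorem~\ref{thm:divisorial} (described in the paragraph following its statement) to the linear subvariety $\oM\cap D_{\Gamma'}$ of $D_{\Gamma'}$: a general point of a codim-$1$ boundary component of $\oM\cap D_{\Gamma'}$ lies in a purely horizontal stratum $D_{\Gamma''}$ whose nodes contain some pair $\{e_i,e_j\}\subset\{e_1,\dots,e_m\}$. The second clause of Theorem~\ref{thm:divisorial} at this general point furnishes a defining equation $F_{ij}$ with $\Ehor[F_{ij}]=\{e_i,e_j\}\subsetneq\Ehor[H]$, contradicting primitivity of $H$ and forcing $m\le 2$.

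\textbf{Part (2).} Write $F=\sum_{e\in\Ehor}a_e\lambda_e\in V\cap W$ and partition $\Ehor=\bigsqcup_{i=1}^r C_i$ into $M$-equivalence classes with chosen representatives $e_i^{(0)}\in C_i$. Theorem~\ref{thm:Mrltd} gives $f_e:=\int_{\lambda_e}\omega = c_e f_{e_i^{(0)}}$ for $e\in C_i$, so the pairwise proportionalities $\lambda_e-c_e\lambda_{e_i^{(0)}}$ are defining equations spanning a subspace $P\subset V\cap W$ of dimension $|\Ehor|-r$. The goal is to deduce $V\cap W=P$, equivalently that $\{f_{e_i^{(0)}}\}_i$ are linearly independent functions on $M$. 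Suppose for contradiction that some $\beta=\sum_i b_i\lambda_{e_i^{(0)}}\in V\cap W$ has at least two non-zero $b_i$. Apply Part~(1) to write $\beta=\sum_k c_k F_k$ with each $F_k\in W$ crossing at most two horizontal nodes. Choosing a complementary splitting $H_1(X;\CC) = V \oplus V_{\text{comp}}$ adapted to the vanishing cycles (via a dual basis to $\{\lambda_e\}$), decompose $F_k = F_k^V + F_k^{\text{comp}}$; the crossing cancellation in $\beta\in V$ translates into $\sum_k c_k F_k^{\text{comp}} = 0$, so $\beta = \sum_k c_k F_k^V$. The remaining work is to show that $\beta$ lies in $P$, using Theorem~\ref{thm:Mrltd} (for size-$2$ summands, whose two crossed nodes lie in a single $C_i$, letting us replace $F_k$ with a pairwise proportionality in $P$ modulo its crossing part) together with the symplectic non-degeneracy of $W$ (for size-$\le 1$ summands, whose $V$-content is pinned down by $W$-duals to pairwise proportionalities).

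\textbf{Main obstacle.} The hardest step is the cancellation-of-crossings argument in Part~(2), especially the handling of size-$1$ summands: one must show that their $V$-contents cannot produce ``composite'' cross-class relations. This is where the symplectic structure of $W$---available in the minimal stratum precisely because relative and absolute homology agree---plays its decisive role, through duality between the pairwise proportionalities in $P$ and elements of $W$ crossing the corresponding nodes. Without the symplectic condition, cross-class composite relations could a priori exist, which is why Theorem~\ref{thm:AIM} is restricted to the minimal stratum.
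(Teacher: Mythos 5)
Your argument for Part~(1) has a genuine gap at the step where you invoke the second clause of \Cref{thm:divisorial}. When that theorem is applied recursively to the linear subvariety $\pMG[\Gamma']=\oM\cap\DG[\Gamma']$ inside the generalized stratum, its divisorial boundary components live at strata $\DG[\Gamma'']$ where $\Gamma''$ pinches \emph{additional} edges beyond those of~$\Gamma'$, and the pairwise-crossing defining equations it produces cross pairs of these \emph{newly pinched} horizontal nodes, not pairs $\{e_i,e_j\}\subset E(\Gamma')$: the nodes $e_1,\dots,e_m$ are already pinched at~$\Gamma'$, so they are not the horizontal vanishing cycles governing the further degeneration $\Gamma'\rightsquigarrow\Gamma''$. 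Nor is the non-recursive form of \Cref{thm:divisorial} available at $\Gamma'$ itself: if $H$ is the only rref equation crossing any of $e_1,\dots,e_m$, then $c(\Gamma')=1$, so by~\eqref{eq:codim} we have $\codim_{\oM}\pMG[\Gamma']=m-c(\Gamma')=m-1\ge 2$, i.e.\ $\pMG[\Gamma']$ is not divisorial. The strongest indication that the approach cannot work as written is structural: nowhere in your Part~(1) do you use the symplecticity of $\iota^*(TM)$ from~\cite{aem}, nor the fact that $\iota$ is an isomorphism in the minimal stratum. If the argument were correct it would give \Cref{prop:pairwise-cross} in arbitrary holomorphic strata, contradicting \Cref{exa:counter-pairwise-cross}, which exhibits a local counterexample in $\Omega\calM_{3,3}(1,1,2)$ that even satisfies the symplecticity condition.

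The paper's route to Part~(1) is instead to prove \Cref{prop:pairwise-cross} via \Cref{lm:proj-def}, namely the bound $\dim\iota^*(TM\cap\Ann V^\perp)\le 1$ coming from the symplecticity of $\iota^*(TM)$; combined with \Cref{lm:hom-indep} and the isomorphism $\iota$ this yields, for any two \Mrel nodes $e_1,e_2$, a defining equation $F$ with $\Ehor[F]=\{e_1,e_2\}$. Part~(1) of the theorem then follows by a short induction on the number of crossed horizontal nodes, peeling off pairwise-crossing equations one at a time. Your Part~(2) you acknowledge as incomplete. In place of the ``cancellation-of-crossings'' you sketch, the paper proves \Cref{lm:pairs-gen} by a dimension count: the $k$ rref equations crossing a horizontal vanishing cycle account, via symplecticity of $\Ann TM$, for a $k$-dimensional $\Lambda\cap\Ann TM$, which is therefore exactly the span of $k$ explicit pairwise proportionalities coming from \Cref{thm:Mrltd}; then \Cref{prop:pairwise-circum} reduces a general relation to pairwise ones among the prescribed $\lambda_1,\dots,\lambda_k$ by a purely combinatorial swap.
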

The two statements will follow from ~\Cref{prop:pairwise-cross} and \Cref{prop:pairwise-circum}, respectively.  Note that~\eqref{item:cross} above is the same statement as that of \Cref{thm:divisorial}, for the case of horizontal divisorial degenerations --- but in the context of affine invariant manifolds of the minimal stratum we prove it for arbitrary degenerations.

This precise description does not directly generalize to the case of the general stratum~$\omodulin(\mu)$, as we demonstrate in~\Cref{exa:counter-pairwise-cross} and~\Cref{exa:counter-pairwise-circum}. The main difficulty in discovering a suitable general statement lies in the fact that while vanishing cycles are naturally elements of absolute homology group $H_1(X;\ZZ)$, the defining equations naturally lie in $H_1(X,\zeroes;\CC)$ (recall that we are in the holomorphic case, so $\poles=\emptyset$).
We will investigate this general situation and application to classification of affine invariant manifolds in further work.

We also record some special properties of the boundary stratification of affine invariant manifolds in \Cref{prop:noncompact} and \Cref{cor:hashorizontal}.

\subsection*{Outline of the paper}
\label{sec:outline-paper}

\begin{itemize}
\item In~\Cref{sec:notation} we recall the moduli space of multi-scale differentials, describe the setup and notation for our study of linear subvarieties via degenerations, and recall the relevant machinery and results of the first author from~\cite{fred}.
\item  In~\Cref{sec:lineq} we start by studying irreducible components of the boundary~$\pM$, proving~\Cref{thm:divisorial}, and then use this recursively to prove~\Cref{thm:Mrltd}. We then further study \thc and non-horizontal equations in detail, proving~\Cref{thm:vertres} and~\Cref{thm:decomp}. Our most precise result is \Cref{prop:eqsquantitative}, which gives  the coefficients of defining equations.
\item In~\Cref{sec:transverse}, we use these detailed results to focus on the non-emptiness and dimensions of the strata~$\pMG$, converting linear equations to equations in plumbing coordinates to obtain~\Cref{thm:localequations}. The form of the equations in plumbing coordinates yields~\Cref{thm:transverse}, and allows us to construct a smooth compactification of Hurwitz spaces in~\Cref{prop:HW}.
\item In~\Cref{sec:cyldef} we further analyze the linear equations and degenerations to prove~\Cref{thm:cyldeformation}, our generalization of the cylinder deformation theorem.
\item  Finally, in~\Cref{sec:AIM} we specialize to the case of affine invariant manifolds. By~\cite{aem}, the tangent space of an affine invariant manifold, projected to absolute homology, is symplectic.  We use this to prove~\Cref{prop:pairwise-cross} and \Cref{prop:pairwise-circum}, which together constitute \Cref{thm:AIM}.
\end{itemize}

\subsection*{Acknowledgments}
We are grateful to Martin M\"oller for useful discussions and for sharing with us the results of the ongoing work of M\"oller and Mullane on related topics. We would also like to thank Alex Wright for many helpful discussions and useful feedback, and special thanks to Scott Mullane for suggesting the possibility of approaching Hurwitz spaces, as we do in~\Cref{prop:HW}.

\section{Notation and setup}\label{sec:notation}
In this section we recall those aspects of the setup, construction, and results of~\cite{BCGGMmsds} that we need, and also the setup and results of~\cite{fred}.  The most technical aspect of ~\cite{fred}, log period spaces, is not necessary for our study.
\subsubsection*{Level graphs and multi-scale differentials}
For a stable Riemann surface $(X,\underline{x})\in\modulin$ we denote $\Gamma$ its dual graph. We will follow the convention of~\cite{BCGGMmsds} in always suppressing the notation for marked points, unless they are used explicitly. A {\em level graph} structure $\overline\Gamma$ on $\Gamma$ is given by a function $\ell:V(\Gamma)\surj\{0,-1,\dots,-L(\lG)\}$. For any $i\in \{0,-1,\dots,-L(\lG)\}$ the subgraphs $\overline{\Gamma}_{(i)}, \lG_{(<i)}$ and so on are defined by taking the induced subgraph on the set of all vertices $v\in V(\Gamma)$ such that $\ell(v)=i$, or $\ell(v)<i$ and so on. For example, the vertices of $\overline{\Gamma}_{(i)}$ are all vertices at level $i$, and the edges are those edges $e\in E(\Gamma)$ such that both their endpoints lie at level~$i$.

An edge $e\in E(\overline\Gamma)$ is called {\em horizontal} if it connects two vertices of the same level, and called {\em vertical} otherwise, and we write $E(\overline\Gamma)=\Ehor[\overline\Gamma]\sqcup\Ever[\overline\Gamma]$. For a vertical edge~$e\in\Ever[\overline\Gamma]$ we denote by $\lbot$ and $\ltop$ the levels of its bottom and top vertex, respectively. We denote $\Ehori\subseteq\Ehor$ the set of horizontal edges connecting vertices of level~$i$.  A multi-scale differential is the data of a stable Riemann surface $X$ together with a collection $\eta=\{\eta_v\}$ of meromorphic differentials on the irreducible components $X_v$ of~$X$ satisfying various conditions described in~\cite{BCGGMivc,BCGGMmsds} --- in particular~$\eta$ has simple poles at all horizontal nodes. An enhancement $\overline\Gamma^+$ of a level graph is a choice of a positive integer $\kappa_e$ for every vertical edge $e$. This integer prescribes the order of zero of the multi-scale differential to be $\kappa_e-1$ at the top preimage of the node~$e$, and the order of pole to be $\kappa_e+1$ at the bottom preimage of~$e$. We will always work with level graphs with a chosen and fixed enhancement, but to keep the notation manageable, from now will simply write $\lG$ for an enhanced level graph. Additionally, the data of a multi-scale differential includes a prong-matching, and great care is needed in understanding equivalence of multi-scale differentials, but as we will be working locally on $\LMS$, we will be able to mostly avoid these considerations.

\subsubsection*{Undegenerations and plumbing}
The boundary $\partial\LMS$ is stratified. It is convenient for us to denote $\DG$ the {\em open} boundary strata (note that in~\cite{euler} this notation is used for closed boundary strata) indexed by enhanced level graphs.  A stratum~$\DG$ is essentially a finite union of some finite covers of products of linear subspaces of products of some strata of meromorphic differentials; in particular~$\DG$ may be disconnected (see~\cite[Sec.~4]{euler} and \Cref{rem:genstratadivisorial} below for more discussion). All of our constructions will be performed locally in a neighborhood $U$, that we will now describe, of  a chosen fixed point $p_0=(X_0,\Gamma,\eta_0)\in\DG$.

The codimension of a stratum $\codim_{\LMS}\DG$ is equal to $H(\lG)+L(\lG)$, where ~$H(\lG):=\#\Ehor$. Fix a small open neighborhood $p_0\in W\subset\DG$. Then a neighborhood of $p_0$ in $\LMS$ can be given as $U:=W\times \Delta^{H(\lG)+L(\lG)}$, where $\Delta$ is a sufficiently small complex disk around zero. Coordinates on the second factor are called plumbing coordinates, which we denote $\{h_e\}_{e\in\Ehor}$ and $\{t_i\}_{i\in \{-1,\dots,-L(\lG)\}}$. We will denote  $U^\circ:=W\times (\Delta^*)^{H(\lG)+L(\lG)}$ the set of all smooth flat surfaces in~$U$. From now on, when we speak of~$U$ and~$W$, we will allow ourselves to further shrink the neighborhoods as necessary.

An open stratum $\DG[\lG']$ intersects $U$ if and only if the (enhanced) level graph $\lG'$ is an undegeneration of $\lG$ (which we write as $\lG'\rightsquigarrow\lG$).  Equivalently, there is a simplicial graph morphism $dg:\lG\to\lG'$, which is obtained as a composition $dg=dg^\hor\circ dg^\ver$ of the horizontal undegeneration $dg^\hor$ that only contracts some set of horizontal edges, and a vertical undegeneration that only contracts some set of level transitions. We refer to~\cite{BCGGMmsds} for a discussion of the behavior of enhancements and the (very delicate) behavior of prong-matchings under undegeneration. Explicitly, the closure $\overline{\DG[\lG']}\cap U$ is the coordinate subspace of~$U$ given by equations $h_e=0$ for all $e\in \Ehor[\lG']\subseteq \Ehor$ and $t_i=0$ for all level transitions of $\lG$ that persist in $\lG'$. From now on, whenever we speak of an undegeneration $\lG'$, we implicitly mean with a {\em given} graph morphism $dg:\lG\surj\lG'$.

Any flat surface $p=(X,\omega)\in U^\circ$ can be obtained by plumbing some $(X_b,\eta_b)\in W\subset\DG$. The plumbing procedure replaces a neighborhood of each node $e\in X_b$, which is locally a union of two disks identified at the origin, with a cylinder, suitably glued to the rest of the surface. We denote $\Lambda_e\subset X$ the pinching curve, also called the seam, which is the circumference curve of this cylinder. The {\em vanishing cycle} is the homology class $\van:=[\Lambda_e]\in\relhomZ[X][\poles][\zeroes]$. We recall that $H_1(X\setminus \zeroes;\ZZ)\inj \relhomZ[X][\poles][\zeroes]$ (thinking of the vanishing cycles in relative, rather than absolute, homology, will be essential in \Cref{sec:AIM}). The intersection pairing $\relhomZ[X][\poles][\zeroes]\times \relhomZ\to\ZZ$ then allows us to compute intersection numbers of $\van$ with elements of $\relhomC$. We note that $\lambda_e$ is only defined up to sign; most of our formulas will include $\van$ with coefficient proportional to the intersection number $\langle \gamma,\van\rangle$ for some~$\gamma\in\relhomC$, which will eliminate this sign ambiguity.
Cutting $X$ along the multicurve $\Lambda:=\{\Lambda_e\}_{e\in E(\lG)}$ decomposes the smooth Riemann surface~$X$ into the union $X_{(0)}\cup X_{(-1)}\dots\cup X_{(-L(\lG))}$ of its levelwise pieces, where the pieces intersect along the seams $\Lambda_e$ for $e\in\Ever$.

While, as discussed above, the local coordinates on $\LMS$ transverse to $\DG$ are given by $t_{-1},\dots,t_{-L(\lG)}$ and $\{h_e\}_{e\in\Ehor}$, the plumbing coordinates $s_e$ for vertical nodes are related to $t_i$ by the equation
\begin{equation}\label{eq:se}
s_e^{\kappa_e}=\prod_{i=\lbot}^{\ltop-1} t_i^{m_{e,i}},
\end{equation}
where we recall that by definition~\cite[(6.7)]{BCGGMmsds}, $a_i$ is the least common multiple of $\kappa_e$ for all $e\in\Ever$ such that $\ltop>i\ge\lbot$, and $m_{e,i}:=a_i/\kappa_e$.

\subsubsection*{The boundary neighborhood in~$M$}
Throughout the text, we will fix once and for all a linear subvariety $M\subseteq\omodulin(\mu)$ of codimension~$m$, and will consider its closure $\oM\subseteq\LMS$, so that the projectivization $\PP\oM\subseteq \PP\LMS$ is compact. Since $\oM$ is the closure of an algebraic subvariety $M\subseteq\omodulin(\mu)$ in the algebraic compactification $\LMS$ of~$\omodulin(\mu)$, it follows that~$\oM$ is algebraic~\cite[Cor.~10.1]{mumfordredbook}.

We will choose $p_0\in\pM\cap\DG$ and for any undegeneration $\lG'\rightsquigarrow\lG$ denote $\pMG[\lG']:=\pM\cap U\cap\DG[\lG']\subset\DG[\lG']$. Recall that in general~$M$ is an immersed submanifold of~$\omodulin$, and we will always want to work at a flat surface~$p=(X,\omega)$ that is a smooth point of $M\cap U$, to avoid having to deal with~$M$ having multiple local irreducible components at~$p$ (each linear in period coordinates). We use~$(X',\lG',\eta')$ to denote points $(X',\omega')\in\pMG[\lG']$ on the (local) open strata corresponding to undegenerations $\lG'\rightsquigarrow\lG$. We will  often omit $\omega$ or $\omega'$ in our notation for flat surfaces.

\subsubsection*{Top level of paths and homology classes}
In~\cite{fred} the first author determined the defining equations for $\pMG\subseteq\DG$ at~$p_0$, in (generalized) period coordinates on~$\DG$,  starting from the defining equations for $M\subseteq\omodulin(\mu)$ at a nearby point~$p\in M\cap U$. Qualitatively, the result is that $\pMG$ is given by linear equations on $\DG$, but we will need the precise description of these equations, which we now recall.

To state the results of~\cite{fred}, we need to restrict paths in~$X$ to their top level. The top level $\topl(\beta)$ of any collection of paths $\beta\subset X$ is the largest $i$ such that $\beta\cap X_{(i)}\ne\emptyset$. The {\em top level $\topl([\beta])$} of a class $[\beta]\in \relhomC$ is the minimum of $\topl(\beta)$ over all collections of paths $\beta$ representing the class~$[\beta]$. For a homology class $[\beta]$, we define its {\em top level restriction} $\toplr$ to be the element $\toplr\in\hlgr[\topl(\beta)]$ defined by choosing a collection of paths $\beta$ representing $[\beta]$ such that $\topl(\beta)=\topl([\beta])$, and restricting each path in~$\beta$ to $X_{(\topl([\beta])}$, considered as a relative homology class there. In~\cite[Prop. 4.2]{fred}, it is shown that this is well-defined. See \Cref{fig:toplvl} for an illustration.

\begin{figure}
\includegraphics[scale=0.75]{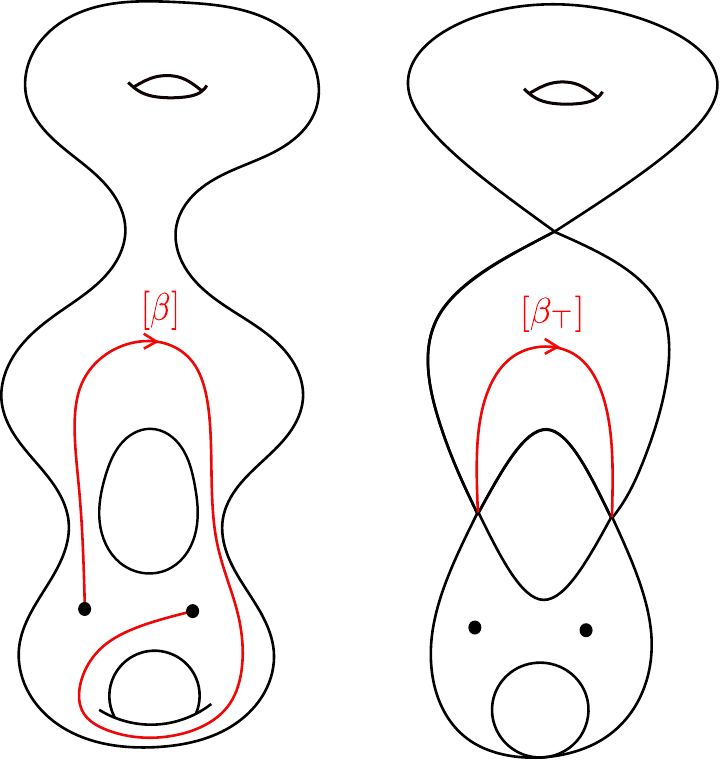}
\caption{The top level restriction of a homology class}
\label{fig:toplvl}
\end{figure}

For a smooth flat surface $p\in U^\circ$ a homology class  $[\beta]\in \relhomC$ is said to be {\em crossing} a node $e\in E(\lG)$ if $\langle [\beta],\van\rangle\neq 0$, where recall that we think of $\van$ as an element of~$\relhomZ[X][\poles][\zeroes]$. We call $[\beta]$ a {\em \hcc} if it crosses some horizontal vanishing cycle {\em at level $\topl([\beta])$}. To simplify language, we will call {\em \nhc} any class~$[\beta]$ that is not a \hcc, and emphasize that such a \nhc~$[\beta]$ may still intersect horizontal vanishing cycles at levels strictly below $\topl([\beta])$.

\subsubsection*{A $\lG$-adapted basis}
We recall from~\cite{fred} that a $\lG$-adapted basis  is a basis for $\relhomZ$ satisfying the following properties. First, all of its elements that are \hccs have intersection 1 with $\van$ for a unique $e\in\Ehor$, where these~$e$ are distinct for different \hccs in the basis, and do not cross any other horizontal nodes. Elements of a $\lG$-adapted basis such that their top level is $i$ can be listed as
$$
\left\{ \crC_1,\dots,\crC_{\lgcr},\hfC_1,\dots,\hfC_{\lghf}\right\},
$$
where each $\crC_j$ is a \hcc with $\langle \crC_j,\van[e_{j}^{(i)}]\rangle=1$ for some distinct horizontal node $e_j^{(i)}\in\Ehori$, and such that $\crC_j$ does not cross any other horizontal nodes at any level. Furthermore, the definition of being a $\lG$-adapted basis requires that each $\hfC_j$ does not cross any horizontal nodes, at any level, and that for any $i$ the top level restrictions $\left\{\toplr[(\hfC_1 )],\dots,\toplr[(\hfC_{\lghf})]\right \}$ form a basis of the quotient of $\hlgr$ by the subspace of Global Residue Conditions. The existence of a $\lG$-adapted basis for any~$\lG$ is proven in~\cite[Prop.~4.8]{fred}. Sometimes we do not need to specify the level of the homology classes and whether they cross horizontal nodes or not. In this case we write the $\lG$-adapted basis simply as
\begin{equation}\label{eq:lGadaptedbasis}
\{ \adC_1,\dots, \adC_{\lgad}\} = \bigsqcup_{i= -L(\lG)}^0 \left\{ \crC_1,\dots,\crC_{\lgcr}, \hfC_1,\dots,\hfC_{\lghf}\right\},
\end{equation}
where $\lgad:= \dim \relhomC=\dim\LMS$. We will choose and fix a $\lG$-adapted basis from now on.

\subsubsection*{Defining equations of~$M$}
The technical core of our arguments is investigating the linear equations for $\pMG$. To keep the notation manageable, we simply say that $F\in\relhomC$ is a {\em defining equation of~$M$} if $\int_{F}\omega=0$ holds identically on~$M$ in a neighborhood of a fixed chosen flat surface~$p\in M\cap U$. We will denote $N\subseteq \relhomC$ the linear space of all defining equations of~$M$ at~$p$, denoted thus because it is the normal space in period coordinates. As discussed in the introduction, the space $N$ is locally constant along~$M$ near~$p$, and thus throughout the paper we should be carefully treating irreducible components $Z$ of $M\cap U$ (which, after shrinking~$U$, are in bijection with the local irreducible components of $\oM$ at $p_0$) individually. To keep the notation and language manageable, we will just speak of defining equations, making the discussion of local irreducible components precise in~\Cref{sec:transverse}, where it is crucial.

Denote $C_l\in\CC$ the coefficients of $F$ in our fixed $\lG$-adapted basis $\{\gamma_l\}_{l=1,\ldots,K}$, so that
\begin{equation} \label{eq:Fformsimple}
F(X,\omega)= \sum_{l=1}^{\lgad} C_{l}\int_{\gamma_l} \omega.
\end{equation}
Equivalently, writing out the basis elements separately, we denote the coefficients of $F$ by $A_l^{(i)},B_l^{(i)}\in\CC$, so that
\begin{equation}\label{eq:Fform}
 F(X,\omega)=\sum_{i=-L(\lG)}^{\topl(F)}\left(\sum_{l=1}^{\lgcr}A_l^{(i)}\int_{\crC_l}\omega
 +\sum_{l=1}^{\lghf}B_l^{(i)}\int_{\hfC_l}\omega\right).
\end{equation}

Writing down all defining equations of~$M$ at~$p$ involves a choice of the basis of the vector space~$N$. We will always choose a basis of defining equations such that the matrix $C=(C_{kl})$ of the coefficients of defining equations~\eqref{eq:Fformsimple} is in {\em reduced row echelon form (rref)} with respect to our chosen $\lG$-adapted basis, and denote $F_1,\dots,F_m$ such a {\em rref basis}.

\subsubsection*{Equations of~$\pMG$ from equations of~$M$}
In~\cite{fred}, the main quantitative result is a way to read off the  equations for $\pMG\subseteq\DG$ from a rref basis:
\begin{thm}[{\cite[Thm.~1.2 and Prop.~8.2]{fred}}]\label{thm:fred}
For each $j=1,\dots,m$, if $F_j$ is a non-horizontal cycle, let $G_j:=\toplr[(F_j)]$, and if $F_j$ is a \hcc, then let $G_j:=0$. Then $G_1,\dots,G_m$ are a basis for the space of local defining equations for $\pMG$ within $\DG$.
\end{thm}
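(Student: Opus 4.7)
The plan is to realize each $G_j$ as the asymptotic limit of the defining equation $F_j=0$ as a smooth family in~$M$ converges to $p_0\in\DG$, using the period expansion in plumbing coordinates of~\cite{BCGGMmsds}. For a non-\hcc $\adC$ with $\topl(\adC)=i$ this expansion takes the form
\[
\int_{\adC}\omega \;=\; c_i(t)\cdot\left(\int_{\toplr[(\adC)]}\eta'_{(i)} \;+\; o(1)\right),
\]
where $c_i(t)$ is a monomial in the vertical plumbing parameters (with $c_0\equiv 1$) whose vanishing order at $\DG$ strictly increases as $i$ decreases. For a \hcc basis element $\crC_l$ at level $i$ crossing the unique horizontal node $e_l$,
\[
\int_{\crC_l}\omega \;=\; c_i(t)\cdot\left(\tfrac{1}{2\pi i}\,(\Res\nolimits_{e_l^+}\eta'_{(i)})\cdot\log h_{e_l} \;+\; O(1)\right),
\]
and, crucially, no other element of the $\lG$-adapted basis contributes a $\log h_{e_l}$ term.

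If $F_j$ is non-horizontal, I would divide $F_j$ by $c_{\topl(F_j)}(t)$ and let $t\to 0$. The strictly lower-level contributions vanish, and by the \rref normalization the surviving terms at level $\topl(F_j)$ are all non-horizontal, yielding the limit equation $\int_{\toplr[(F_j)]}\eta'=0$ on a neighborhood of $p_0$ in $\DG$. This identifies $G_j=\toplr[(F_j)]$ as a defining equation of $\pMG$ within $\DG$. Linear independence of the non-zero $G_j$'s follows directly from the \rref form, combined with the defining property of the $\lG$-adapted basis that $\{\toplr[(\hfC_l)]\}$ is a basis of $\hlgr$ modulo Global Residue Conditions.

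If instead $F_j$ is a \hcc with \rref-leading coefficient $1$ on some $\crC_l$ at level $i=\topl(F_j)$, the $\log h_{e_l}$ divergence is uncancellable, so the equation $F_j=0$ along $M$ forces $\Res_{e_l^+}\eta'_{(i)}=0$ together with a further finite-part condition. One must then verify that, thanks to the \rref structure, this residue-plus-finite-part condition is already a consequence of the non-horizontal $G_{j'}$'s and the Global Residue Conditions defining $\DG$, so that $F_j$ contributes no new defining equation of $\pMG$ in $\DG$ and the convention $G_j:=0$ is correct. Spanning of all defining equations of $\pMG$ within $\DG$ then follows by a dimension count: any such equation lifts through the $\lG$-adapted basis to an element of the space $N$ of defining equations of $M$ at~$p$, and is thus a combination of the $F_j$'s.

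The main obstacle is the \hcc case: tracking together the residue condition and the finite-part condition produced by a \hcc $F_j$ and showing that both are already implied by the non-horizontal $G_{j'}$'s and the ambient Global Residue Conditions of $\DG$. This in turn relies on the detailed plumbing asymptotics of~\cite{BCGGMmsds} and on the rigidity of the \rref form, which prevents accidental cancellations of logarithmic divergences among different defining equations.
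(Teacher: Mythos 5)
This statement is not proved in the present paper: \Cref{thm:fred} is cited wholesale from \cite{fred} (Thm.~1.2 and Prop.~8.2), so there is no internal proof to compare against. That said, your strategy — expanding periods via the log-period asymptotics in plumbing coordinates and normalizing by a level-scaling factor — is indeed the mechanism that \cite{fred} uses, and it is echoed in the present paper in \Cref{prop:logp} and the computation leading to \eqref{eq:convert}.

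However, your treatment of the \thc case contains a genuine error. You write that for a \hcc $F_j$ crossing a horizontal node $e_l$, the $\log h_{e_l}$ divergence "forces $\Res_{e_l^+}\eta'_{(i)}=0$." This cannot be right: a multi-scale differential is required to have a \emph{simple} pole (hence nonzero residue) at every horizontal node, so such a vanishing would say that the boundary point $p_0$ does not exist at all. What actually happens is subtler. When $F_j$ crosses a single horizontal node the divergence genuinely cannot be cancelled, and the conclusion is not that a residue vanishes but that no such $F_j$ can occur in a rref basis at a boundary point (cf.\ the remark in the proof of \Cref{thm:divisorial} that every \thc equation must cross at least two horizontal nodes). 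When $F_j$ crosses several horizontal nodes $e_1,\ldots,e_k$, the $\log h_{e_l}$ divergences can cancel among themselves, and what is forced is a \emph{proportionality relation} $\sum_l n_l \langle F_j,\lambda_{e_l}\rangle\int_{\lambda_{e_l}}\omega=0$ with nonzero coefficients (the monodromy argument, \Cref{prop:monodromy}). Crucially, each $\lambda_e$ is itself a class in $H_1(X\setminus\zeroes;\ZZ)$ with $\langle\lambda_e,\lambda_{e'}\rangle=0$ for all $e,e'$, so this residue relation is a \emph{non-\thc} defining equation of $M$ and is therefore already among the non-horizontal $F_{j'}$'s producing nonzero $G_{j'}$'s. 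That is the reason the \thc $F_j$ contributes nothing new, and it is a rather different statement from the one you make.

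Two further points. First, your expansion $\int_\gamma\omega = c_i(t)\bigl(\int_{\toplr[\gamma]}\eta'+o(1)\bigr)$ for a non-\hcc $\gamma$ suppresses the logarithmic contributions $\langle\gamma,\lambda_e\rangle r_e\ln s_e$ at \emph{lower}-level horizontal nodes and at vertical nodes, which $\gamma$ may well cross. They do disappear after division by $c_{\topl(F_j)}(t)$, but because the $r_e$ decay polynomially faster than the $\log$ grows, not because they are absent; as written the expansion would not be valid for such $\gamma$. Second, the spanning argument "any defining equation of $\pMG$ lifts through the $\lG$-adapted basis to an element of $N$" needs the lifting statement \Cref{rem:lift}, which applies only to equations contained entirely in a single level; a general defining equation of $\pMG$ must first be decomposed levelwise before each piece can be lifted, and the independence of the lifted equations from the $G_{j'}$'s then needs its own argument. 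As it stands the dimension count is a sketch of a plan rather than a proof.
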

Essentially what this says is that we represent each equation~$F_j$ by a collection of paths such that its top level is minimal possible, equal to $\topl(F_j)$; then if this collection of paths crosses any horizontal node at its top level, then on~$\DG$ we ``lose'' this defining equation~$F_j$, otherwise the equation $F_j$ on~$\DG$ yields the equation  $\toplr[(F_j)]$.

In view of this theorem, for our fixed linear subvariety~$M$ and for any undegeneration~$\lG'\rightsquigarrow\lG$ we denote by~$c(\lG')$ the number of defining equations of~$M$ at $p$ that are lost on $\DG[\lG']\cap U$. Thus the number of defining equations for $\pMG[\lG']$ inside $\DG[\lG']$ is equal to $m-c(\lG')$, and thus ~\Cref{thm:fred} implies that
\begin{equation}\label{eq:codim}
\codim_{\LMS}(\pMG[\lG'])=H(\lG')+L(\lG')+m-c(\lG'),
\end{equation}
since $\pMG[\lG']$ has codimension $m-c(\lG')$ within the open stratum $\DG[\lG']$, which itself has codimension $H(\lG')+L(\lG')$ in~$\LMS$.
For further use, we call a degeneration $dg:\lG\to\lG'$ {\em divisorial} if $\dim_\CC\pMG=\dim_\CC\pMG[\lG']-1$.
\begin{rem}
\label{rem:lift}
A consequence of \Cref{thm:fred} is that linear equations for $\pMG$ can be {\em lifted} to $M$. More precisely,
if~$F$ is a linear equation among periods which is satisfied on $\pMG$ in a neighborhood of $p_0$, and $F$ is completely contained in level $i$, i.e. $F$ can be represented by paths contained in $X_{(i)}$, then there exists a linear equation $G$ for $M$, valid in a neighborhood of a nearby point $(X,\omega)\in M$, such that $G_{\topl}=F$. In other words,~$F$ is the top level restriction of the linear equation $G$.
We stress that one can only lift an equation~$F$ for $\pMG$ if it is completely contained in a fixed level. Any linear equation defining $\pMG$ can then be written as a sum of linear equations, each of which is completely contained in some level (these levels might be different for different summands), and each of these summands can be lifted.
\end{rem}

\section{Degenerations of linear equations}\label{sec:lineq}
In what follows, given a defining equation~$F$ of~$M$ at~$p$, it will be useful to consider various associated periods. For any  undegeneration $\lG'\rightsquigarrow \lG$ and for any collection of integers $\{n_e\colon e\in E(\lG')\}$, we define the {\em $(\lG',\underline{n})$-residue of $F$} by
\begin{equation}\label{eq:residuedefine}
R(F,\lG',\underline{n})\colon= \sum_{e\in E(\lG')}n_e\langle F,\lambda_e\rangle \int_{\lambda_{e}}\omega.
\end{equation}
\begin{prop}[The monodromy argument, {\cite[Prop. 7.6]{fred}}]\label{prop:monodromy}
For any defining equation $F$ of~$M$ at~$p$, if $\pMG[\lG']$ is non-empty for some undegeneration $\lG'\rightsquigarrow\lG$, then for some collection  $\underline{n}$ of {\em positive} integers the residue $R(F,\lG',\underline{n})$ is identically zero on~$M$.
\end{prop}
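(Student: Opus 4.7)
The strategy is a monodromy argument based on the principle that the space $N$ of defining equations of~$M$ at a smooth point is a local system on~$M$, so any monodromy $\sigma$ on $\relhomC$ arising from a loop in~$M$ preserves~$N$: if $F\in N$ then $\sigma\cdot F\in N$, and hence $\sigma\cdot F-F\in N$ is an identically-vanishing linear combination of periods on~$M$. I would produce such a loop from the hypothesis $\pMG[\lG']\ne\emptyset$ by choosing an analytic arc $f\colon \Delta\to\oM$ with $f(0)\in\pMG[\lG']$ and $f(\Delta^{*})\subseteq M$; such an arc exists since $\oM$ is algebraic and $f(0)$ lies in the closure of~$M$. The pulled-back family of Riemann surfaces over~$\Delta$ degenerates at $t=0$ to a nodal surface with dual graph~$\lG'$, so every edge of~$\lG'$ is a genuine node in the central fiber and the corresponding plumbing coordinate vanishes with strictly positive order along~$f$.

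Next I would compute the monodromy $\sigma$ of this arc around $t=0$ acting on $\relhomC$ of a generic fiber. Combining the ambient local monodromy of $\LMS$---the Dehn twist $T_{\van[e]}$ for a loop around $h_{e}=0$, and the product $\prod_{e':\lbot[e']\le i<\ltop[e']}T_{\van[e']}^{m_{e',i}}$ for a loop around $t_{i}=0$ (this coming from the covering relation~\eqref{eq:se})---with the strictly positive orders of vanishing of the relevant $h_{e}$'s and $t_{i}$'s along~$f$, and using that the vanishing cycles of distinct nodes are disjoint so that the Dehn twists commute, I obtain
$$\sigma=\prod_{e\in E(\lG')}T_{\van[e]}^{n_{e}}$$
with each $n_{e}$ a strictly positive integer: every horizontal $e\in\Ehor[\lG']$ contributes from its own $h_{e}$, and every vertical $e\in\Ever[\lG']$ spans at least one level transition persisting in~$\lG'$ and therefore contributes at least~$m_{e,i}\ge 1$.

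For a smooth point $p=f(\epsilon)\in M$ and any defining equation $F\in N$ at~$p$, the loop $|t|=\epsilon$ lies in~$M$, so $\sigma\cdot F\in N$ and hence $\sigma\cdot F-F\in N$. Iterating the Picard--Lefschetz formula $T_{\van[e]}(\beta)=\beta+\langle\beta,\van[e]\rangle\van[e]$ (with no cross-terms, since distinct vanishing cycles are disjoint and hence have vanishing mutual intersection pairings), one computes
$$\sigma\cdot F-F=\sum_{e\in E(\lG')}n_{e}\,\langle F,\van[e]\rangle\int_{\van[e]}\omega=R(F,\lG',\underline{n}),$$
establishing that the residue is identically zero on~$M$. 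The main obstacle is pinning down the integer exponents $n_{e}$ and verifying their positivity, which requires carefully combining the local monodromy contribution of each plumbing coordinate with the covering relation~\eqref{eq:se} and with the positive orders of vanishing of the plumbing coordinates along~$f$; positivity ultimately follows from the fact that every edge of~$\lG'$ genuinely bounds a node in the central fiber $f(0)\in\DG[\lG']$.
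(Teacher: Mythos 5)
Your proposal is correct and takes essentially the same approach as the paper: both choose a holomorphic arc $f\colon\Delta\to\oM$ with $f(0)\in\pMG[\lG']$ and $f(\Delta^*)\subseteq M$, apply Picard--Lefschetz to the Gauss--Manin monodromy of the loop $|z|=\epsilon$ (which preserves the local system $N$ of defining equations), and subtract to obtain the vanishing residue with positive exponents $n_e$. The only cosmetic difference is that you assemble the monodromy from the ambient contributions of each $h_e$ and $t_i$ via the relation~\eqref{eq:se}, whereas the paper packages this directly by defining $n_e$ as the vanishing order of $s_e\circ f$; these are equivalent.
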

For the convenience of the reader, we quickly recall from~\cite{fred} the outline of the proof.
\begin{proof}
Let $f:\Delta\to \oM$ be a holomorphic map from a disk such that $f(\Delta^*)$ is contained in the smooth locus of~$M$, $p\in f(\Delta)$, and $f(0)=p_0$. We define $n_e$ to be the (positive, since $s_e(p_0)=0$) vanishing order of $s_e\circ f$ at $z=0\in\Delta$, where we recall that $s_e$ is the plumbing parameter for the corresponding node. Using Picard-Lefschetz, the monodromy of any cycle $[\beta]\in\relhomZ$ for the Gauss-Manin connection along $f$ can be computed in terms of vanishing cycles as
\[
[\beta]\mapsto [\beta] + \sum_{e\in E(\lG)}n_e\langle \gamma,\lambda_e\rangle\lambda_e.
\]
Thus parallel transport along the generator of $\pi_1(\Delta^*)$ transforms the equation $F$ written in the form~\eqref{eq:Fformsimple} into
\[
\sum_{l=1}^{\lgad} C_l\left (\int_{\adC_l}\omega + \sum_{e\in E(\lG)}n_e\langle \gamma_l,\lambda_e\rangle\int_{\lambda_e}\omega\right),
\]
which must then also be a defining equation of~$M$ at $p$, and then subtracting $F$ in the form~\eqref{eq:Fformsimple} from this equation yields the Proposition.
\end{proof}

We now use this monodromy argument to quickly prove the necessary conditions for boundary strata of $\LMS$ to contain irreducible components of the boundary~$\pM$. Since $\oM\subseteq\LMS$ is an algebraic subvariety, its intersection~$\pM$ with the boundary of $\LMS$, which is a divisor, is an equidimensional variety of dimension $\dim_\CC\pM=\dim_\CC M-1$. For~$U\ni p_0$ sufficiently small, each irreducible component~$Y$ of $\pM\cap U$ must contain~$p_0$. Thus the generic point of~$Y$ must be contained in the open stratum $\DG[\lG']$ for some undegeneration $\lG'\rightsquigarrow \lG$.

\begin{proof}[Proof of~\Cref{thm:divisorial}]
Denote $Y^o:=Y\cap {\DG[\lG']}$ the open part of a divisorial boundary component, so that $\dim_\CC Y^o=\dim_\CC M-1$. Substituting the dimension of $Y^o$ from~\eqref{eq:codim} yields
\begin{equation}\label{eq:codimcompute}
 H(\lG')+L(\lG')+m-c(\lG')=m+1.
\end{equation}
Recall that $c(\lG')$ is the number of equations~$F_j$ in the rref basis that are ``lost'' on~${\pMG[\lG']}$, which are those where $F_j$ is \thc.
\Cref{prop:monodromy} shows that every  equation $F_j$ crosses at least two horizontal nodes (otherwise if it only crossed one horizontal node, then the period of~$\omega$ over the corresponding vanishing cycle would vanish, which is impossible, since the twisted differential must have a simple pole at every horizontal node).  By definition, for each \thc equation $F_j$ of the rref basis the pivot corresponds to a horizontal node $\lambda_e$ crossed by $F_j$.  Thus either $H(\lG')=0$ or $c(\lG')\leq H(\lG')-1$. In the first case we conclude from~\eqref{eq:codimcompute} that $L(\lG')=1$.

In the latter case substituting $H(\lG')\ge c(\lG')+ 1$ into the left-hand-side of ~\eqref{eq:codimcompute} yields $c(\lG')+1+L(\lG')+m-c(\lG') \ge m+1$, which is only possible if $L(\lG')=0$, and moreover if $H(\lG')=c(\lG')+1$. In that case the set of equations of the rref basis of defining equations can cross only one additional horizontal node in addition to the pivots, and the matrix, in rref, of defining equations of~$M$ must have the form
\[
 \left(\begin{array}{@{}c|c@{}}
    \begin{matrix}
    1 & 0 &\dots &0 & d_1   \\
    0 & \ddots & \vdots& \vdots&\vdots \\
     \vdots& 0 & 1& 0 & d_{H(\lG')-1} \\
   0& 0 & 0& 1 & d_{H(\lG')}
   \end{matrix} &\makebox[7em]{\BigStar}\\
	 \hline
\raisebox{-1.5em}{\makebox[7em]{ \BigZero}}& \raisebox{-1.5em}{\makebox[7em]{\BigStar}}\\ [3em]
 \end{array}\right)
\]
where the upper rows correspond to \thc equations for $\lG'$, the $d_l$ are non-zero, and the lower rows correspond to non-horizontal equations for $\lG'$.
\end{proof}

\subsection{Generalized strata, and constructing degenerations recursively}
In the proof of~\Cref{thm:Mrltd} below, and for potential applications of our machinery to classifying or ruling out existence of linear subvarieties of a given stratum, one needs to apply \Cref{thm:divisorial} recursively. Starting from a non-complete linear subvariety $M\subseteq \omodulin(\mu)$, we consider a divisorial boundary component $M':=\pMG\subseteq \DG$, for which~\Cref{thm:divisorial} gives necessary conditions on the graph $\Gamma$. By the results of~\cite{fred} $M'$ is locally given within $\DG$ by linear equations, and we would like to apply \Cref{thm:divisorial} again to yield a further divisorial degeneration of $M'$, assuming again that $M'$ is non-complete. However, \cite[Thm.~1]{fred} as stated does not apply to show that $\partial M'_{\lG'}$ inside of $\overline{\DG[\lG']}$ is a linear subvariety, because in general $\overline{\DG[\lG']}$ will be singular.
\begin{rem}[Generalized strata of differentials]\label{rem:genstratadivisorial}
The stratification of the boundary of $\LMS$ is discussed in detail in~\cite[Sec.~4]{euler}. The boundary strata are called there {\em generalized} strata of differentials. We now recall their geometric description and explain how our results can be adapted to this generality.

Let $\bm{g}=(g_1,\ldots,g_k)$ be  a tuple of genera, $\bm{n}=(n_1,\ldots,n_k)$ a tuple of positive integers, and $\bm{\mu}=(\mu_1,\ldots,\mu_k)$ a tuple of types of differentials, i.e.~$\mu_i$ is a partition of $2g_i-2$ into (not necessarily positive) integers, of length $n_k$.
The disconnected stratum is defined to be
\[
\Omega\calM_{\bm{g},\bm{n}}(\bm{\mu}):=\prod_{i=1}^{k} \Omega\calM_{g_i,n_i}(\mu_i),
\]
and the projectivization $\PP\Omega\calM_{\bm{g},\bm{n}}(\bm{\mu})$ is the quotient of $\Omega\calM_{\bm{g},\bm{n}}(\bm{\mu})$ by the diagonal $\CC^*$-action. A residue subspace $\mathfrak{R}$ is a set of linear equations on residues, modeled on the global residue conditions and matching residue conditions, see~\cite[Sec.~4.1]{euler} for the precise definition. The  generalized stratum $\Omega\calM_{\bm{g},\bm{n}}^{\mathfrak{R}}(\bm{\mu})$ modeled on a residue subspace $\mathfrak{R}$ is the subspace of $\Omega\calM_{\bm{g},\bm{n}}(\bm{\mu})$ consisting of all surfaces with residues lying in $\mathfrak{R}$.
In \cite[Prop. 4.2]{euler} the authors construct a compactification $\PP\Omega\overline{\calM}_{\bm{g},\bm{n}}^{\mathfrak{R}}(\bm{\mu})$ of $\PP\Omega\calM_{\bm{g},\bm{n}}^{\mathfrak{R}}(\bm{\mu})$ similar to the moduli space of multi-scale differentials.
For an enhanced level graph $\lG$  and for each level $i$, let
$(\bm{g}^{[i]},\bm{n}^{[i]},\bm{\mu}^{[i]},\mathfrak{R}^{[i]})$ be the tuple consisting of the genera, number of points, types of differentials and residues conditions at each irreducible component of level $i$.
The {\em generalized stratum} associated to $\lG$ is
\[
B_{\lG}:= \Omega\calM_{\bm{g}^{[0]},\bm{n}^{[0]}}^{\mathfrak{R}^{[0]}}(\bm{\mu}^{[0]}) \times\prod_{i=-L(\lG)}^{-1} \PP\Omega\calM_{\bm{g}^{[i]},\bm{n}^{[i]}}^{\mathfrak{R}^{[i]}}(\bm{\mu}^{[i]})
\]
and, by replacing  $\PP\Omega\calM_{\bm{g}^{[i]},\bm{n}^{[i]}}^{\mathfrak{R}^{[i]}}(\bm{\mu}^{[i]})$ with $\PP\Omega\overline{\calM}_{\bm{g}^{[i]},\bm{n}^{[i]}}^{\mathfrak{R}^{[i]}}(\bm{\mu}^{[i]})$, we define $\overline{B}_{\lG}$ similarly.
The generalized stratum $B_{\lG}$ admits a system of {\em generalized period coordinates} as described in \cite[Sec. 2.6]{fred}, with transition functions that are linear on the top level, and projective-linear on lower levels.

In~\cite{euler} the following diagram is constructed
\[
\begin{tikzcd}
&&D_{\lG}^s\arrow{dl}[swap]{p_{\Gamma}} \arrow{dr}{c_{\Gamma}} & &\\
\overline{B}_{\Gamma} & B_{\Gamma}\arrow[hook']{l}& & D_{\Gamma}\arrow[hook]{r} & \overline{D}_{\Gamma}
\end{tikzcd}
\]
where $c_{\Gamma},p_{\Gamma}$ are covering maps. We do not give the precise definition of $D^s_{\gamma}, c_{\Gamma},p_{\Gamma}$ and instead refer the reader to Section $4$ in (loc.cit.).
\end{rem}

The main theorem of \cite{fred} can then be rephrased as
\begin{prop}Let $M\subseteq \Omega\calM_{g,n}(\mu)$ be a linear subvariety. Then
\[
p_{\lG}(c_{\lG}^{-1}(\pMG))\subseteq B_{\lG}
\]
is a {\em levelwise} linear subvariety for the linear structure on $B_{\lG}$.
\end{prop}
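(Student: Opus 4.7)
The plan is to deduce the statement directly from \Cref{thm:fred} by transporting the linear structure along the two covering maps $c_\Gamma$ and $p_\Gamma$ in the diagram of \Cref{rem:genstratadivisorial}. First I would fix a point $p_0 \in \pMG$ and apply \Cref{thm:fred} to obtain that $\pMG \subseteq \DG$ is locally cut out by a basis of linear equations of the form $G_j = \toplr[(F_j)]$, where $F_1,\dots,F_m$ is a rref basis of defining equations of $M$ at a nearby smooth point $p \in M \cap U$. The key structural feature of each $G_j$ is that it is a relative homology class supported entirely at a single level $i = \topl(F_j)$ of the degeneration, so as a linear functional on periods it involves only the periods of the level-$i$ piece of the multi-scale differential.

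Next I would lift these equations through the covering $c_\Gamma : D_\Gamma^s \to D_\Gamma$. Since $c_\Gamma$ is \'etale, generalized period coordinates pull back to generalized period coordinates on $D_\Gamma^s$, and the preimage $c_\Gamma^{-1}(\pMG)$ is locally the vanishing locus of the same collection of linear functionals $G_1,\dots,G_m$, now regarded as periods on $D_\Gamma^s$. Because $p_\Gamma : D_\Gamma^s \to B_\Gamma$ is also a covering map and hence a local isomorphism, the image $p_\Gamma(c_\Gamma^{-1}(\pMG))$ is, in a neighborhood of the image point, the vanishing locus of the $G_j$ transported to $B_\Gamma$ via the local inverse of $p_\Gamma$. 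The point of interposing $D_\Gamma^s$ is precisely that it is the space on which both the period coordinates from $B_\Gamma$ and the prong-matching data from $\DG$ are simultaneously defined, which is what makes the transfer of equations unambiguous.

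Finally I would verify that the resulting locus is \emph{levelwise} linear in the sense of \cite[Sec.~2.6]{fred}. By the first paragraph, each $G_j$ involves only the periods of a single level~$i = \topl(F_j)$, and so, when viewed via period coordinates on $B_\Gamma = \Omega\calM_{\bm g^{[0]},\bm n^{[0]}}^{\mathfrak R^{[0]}}(\bm\mu^{[0]}) \times \prod_{i<0} \PP\Omega\calM_{\bm g^{[i]},\bm n^{[i]}}^{\mathfrak R^{[i]}}(\bm\mu^{[i]})$, it is a linear functional on the period coordinates of the level-$i$ factor alone. At level $0$ this is genuinely linear, while at negative levels the factor is projectivized and an equation homogeneous in the level-$i$ periods descends to a projective-linear equation; both are compatible with the $\CC^*$-scaling and with the transition functions of generalized period coordinates. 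Thus the locus is linear level by level.

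The main technical issue is bookkeeping around the covering $D_\Gamma^s$: one must check that the particular choice of representatives of the classes $\gamma_l$ in a $\Gamma$-adapted basis, together with the prong-matching equivalence built into $\LMS$, is compatible with the residue subspace $\mathfrak{R}^{[i]}$ defining each generalized stratum factor of $B_\Gamma$, so that the equations $G_j$ descend from $D_\Gamma^s$ to well-defined linear equations on $B_\Gamma$ and not merely on the cover. Once this compatibility is recorded, the proposition is essentially a restatement of \Cref{thm:fred} in the generalized stratum language of \cite{euler}.
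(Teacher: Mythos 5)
Your proposal is essentially correct and unwinds exactly what the paper leaves implicit: the paper does not give a standalone proof of this proposition, but presents it as a direct rephrasing of the main theorem of \cite{fred} (quoted here as \Cref{thm:fred}) in the language of generalized strata from~\cite{euler}, and your argument carefully spells out why that rephrasing is legitimate. The key observation — that each $G_j=\toplr[(F_j)]$ is by construction a relative homology class supported in a single level $X_{(\topl(F_j))}$, hence a linear (at top level) or projective-linear (at lower levels) functional on the corresponding factor of $B_\Gamma$, and that the covering maps $c_\Gamma$, $p_\Gamma$ allow these to be transported to $B_\Gamma$ — matches the intent of the paper precisely. One small imprecision to fix: \Cref{thm:fred} sets $G_j := 0$ when $F_j$ is a \hcc, so not every $G_j$ is of the form $\toplr[(F_j)]$; the defining system of $\pMG$ is spanned by the $\toplr[(F_j)]$ for the non-horizontal $F_j$ only, while the remaining equations are ``lost.'' This does not affect the conclusion, since zero equations contribute nothing, but the phrase ``basis of linear equations of the form $G_j=\toplr[(F_j)]$'' should be amended accordingly.
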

By abuse of notation we will just write $\pMG$ for $p_{\lG}(c_{\lG}^{-1}(\pMG))$.
Levelwise here means that the linear equations defining $\pMG$ only restrict periods of the same top level.
Now given a linear subvariety $M'$ of $B_{\lG}$ and a boundary stratum $D_{\lG'}$ of $\overline{B_{\lG}}$ we proceed in the same way and can thus consider $\partial M'_{\lG'}\subseteq \DG[\lG
']$ also as a linear subvariety, by the same abuse of notation as above.

\begin{rem}[Constructing chains of divisorial degenerations]\label{rem:chain}
By the previous Remark, we can now use \Cref{thm:divisorial} to construct chains of undegenerations, where each is divisorial in the next.
Let~$M$ be a linear subvariety in a (possibly generalized) stratum $B$ and $p_0\in \pMG$ a boundary point. The undegenerations $\lG'\rightsquigarrow\lG$ corresponding to boundary divisors $\DG[\lG']\subseteq B$ are those where  $\lG'$ either has only two levels and no horizontal nodes or such that $\lG'$ has a unique edge, which is horizontal. In the former case the undegeneration $\lG'\rightsquigarrow\lG$ corresponds to keeping only those edges of $\lG$ that cross some level transition (i.e.~such that  $\ltop>i\ge\lbot$). Since $p_0\in\pMG\subseteq\overline{\pMG[\lG']}$, in either case the intersection of $\oM$ with $\overline{\DG[\lG']}$ is non-empty. Since it is an intersection with a divisor, it follows that $\dim \overline{\pMG[\lG']}=\dim M-1$.

If $D_{\lG'}$ is a purely vertical divisorial stratum, by \Cref{thm:divisorial}, a generic point of the intersection of $\oM$ with  $\overline{\DG[\lG']}$ is contained in the open boundary stratum $\DG[\lG']$.

On the other hand, if $D_{\lG'}$ is a purely horizontal boundary stratum, again by \Cref{thm:divisorial} there exists an intermediate undegeneration $\lG'\rightsquigarrow\lG''\rightsquigarrow\lG$ such that some irreducible component of $\partial M$ is generically contained in $\pMG[\lG''],\dim_\CC\pMG[\lG'']=\dim_\CC M-1$ and furthermore $\lG''$ is a purely horizontal level graph.

We can thus construct chains of undegenerations
\[
pt= \Gamma_0 \rightsquigarrow  \Gamma_1 \rightsquigarrow \cdots \rightsquigarrow \Gamma_d= \Gamma,
\]
such that each boundary stratum $\pMG[\lG_i]$ is non-empty, each undegeneration \[\Gamma_j \rightsquigarrow\Gamma_{j+1}\] is either purely vertical or purely horizontal and furthermore $\dim_\CC \pMG[\Gamma_{j+1}]=\dim_\CC \pMG[\Gamma_{j}]-1$. Note that $pt$, a single vertex and no edges, corresponds to the open stratum $B$ itself. Such a chain of divisorial degenerations is determined by prescribing at each step $j$ whether the undegeneration $\Gamma_j \rightsquigarrow\Gamma_{j+1}$ smoothes some given level transition (and nothing else), or by requiring $\Gamma_j \rightsquigarrow\Gamma_{j+1}$ to smooth a given horizontal edge. However, in that case the undegeneration may also have to smooth some further collection of horizontal edges.

Sometimes it will be more convenient to think of degenerations rather than undegenerations. When thinking of $\Gamma_j \rightsquigarrow\Gamma_{j+1}$  as a degeneration of $\Gamma_j$, instead of smoothing out a level transition or a collection of nodes we will then say that the degeneration pinches a level transition or a collection of nodes.
\end{rem}

\begin{proof}[Proof of \Cref{thm:vertres}]
Consider the closed boundary divisor $\overline{\DG[{\lG [i]}]}:=\{t_i=0\}\subseteq \partial\LMS$. In other words $\lG [i]$ is the undegeneration of $\lG$ opening up all horizontal nodes and all level passages except the one between level $i+1$ and level $i$. In particular $E(\lG [i]):=\{ e\in E(\lG)\,:\, \ltop> i\ge\lbot\}$. Since the intersection $\overline{\DG[{\lG [i]}]}\cap \oM$ is non-empty because it contains~$p_0$, this intersection is a divisor in $\oM$. Let~$Y$ be an irreducible component of $\pM$ contained in $\overline{\pMG[\lG [i]]}$. By \Cref{thm:divisorial}, $Y$ is generically contained in $\DG[{\lG [i]}]$. Let then $F$ be any defining equation of $M$, written in the form~\eqref{eq:Fformsimple}.  Since $\pMG[{\lG [i]}]$ is non-empty, it follows from \Cref{prop:monodromy} that there exist positive integers $n_e$ for $e\in E(\lG [i])$ such that
\begin{equation}\label{eq:Ri}
\sum_{e\in E(\lG [i])}n_e\langle F,\lambda_e\rangle\int_{\lambda_{e}}\omega  =0.
\end{equation}
We recall that the integers $n_e$ are computed as vanishing orders of the plumbing parameter $s_e$ along a degenerating family and thus  by \eqref{eq:se} there exists an integer $d$ such that
\[
n_e=d\cdot m_{e,i}
\] and thus \eqref{eq:Ri} is equivalent to
\begin{equation}\label{eq:vereq}
R_i(F):=\sum_{e\in E(\lG [i])}m_{e,i}\langle F,\lambda_e\rangle\int_{\lambda_{e}}\omega =0.
\end{equation}
\end{proof}

\subsection{\Mrel nodes, and proofs of~\Cref{thm:Mrltd} and~\Cref{thm:decomp}}
We now further investigate the form of linear equations crossing horizontal nodes, setting up the notation and preliminary results for the proof of~\Cref{thm:Mrltd} and ~\Cref{thm:decomp}.
\begin{df}\label{df:Mrelated}
For a defining equation $F$ we let
\[
\Ehor[F]:= \{ e\in \Ehor \,|\, \langle F,\van\rangle \ne 0\}.
\]
be the set of horizontal nodes crossed by~$F$. A set $S\subseteq\Ehor$ is called {\em $M$-correlated} if $S= \Ehor[F]$ for some defining equation~$F$ of~$M$ at~$p$. An $M$-correlated set $S$ is called {\em $M$-primitive} if no proper subset of $S$ is $M$-correlated.
We let $\sim$ be the equivalence relation on $\Ehor$ generated by $M$-primitive subsets, and if $e\sim e'$ we say $e,e'$ are {\em \Mrel}. In words, two nodes $e,e'\in \Ehor$ are \Mrel  if there exist $M$-primitive collections $S_1,\dots,S_k$ of horizontal nodes, and a sequence $e=e_0,\dots, e_k=e'$ of horizontal nodes such that $\{e_i,e_{i+1}\}\subseteq S_{i+1}$ for $i=0,\dots, k-1$. The relation $\sim$ partitions $\Ehor$ into {\em \Mequiv classes}.
\end{df}
\begin{rem}\label{rem:limres}
The purpose of this definition is to formalize the notion that there is a defining equation~$F$ that crosses both nodes, and that cannot be written as a sum of two defining equations that each cross a strictly smaller collection of horizontal nodes. Note that the definition does not require \Mrel nodes to be of the same level, but as periods of~$\omega$ over horizontal vanishing cycles of different levels go to zero at different rates, we will see below in~\Cref{cor:Mrltdlvl} that \Mrel horizontal nodes must in fact have the same level.
\end{rem}

We now show that \Mequiv classes can be computed using the rref basis $F_1,\dots,F_m$. We say two  nodes $e,e'\in \Ehor$ are {\em \rrefrel}, if there exists a chain of elements $F_{l_1},\dots, F_{l_k}$ of the rref basis, and a sequence $e=e_0,\dots, e_k=e'$ of horizontal nodes such that $\lbrace e_i,e_{i+1}\rbrace\subseteq\Ehor[F_{l_{i+1}}]$, for each $i=0,\dots, k-1$. Said differently, rref-cross-equivalence is the equivalence relation generated by $\Ehor[F_1],\dots,\Ehor[F_m]$.

\begin{lm}\label{lm:Mrref}
Two horizontal nodes are \Mrel if and only if they are \rrefrel.
\end{lm}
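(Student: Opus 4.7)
The plan is to identify both equivalence relations with a common structure coming from linear algebra on the space of defining equations. Set $V:=\pi(N)\subseteq\CC^{\Ehor}$, where $\pi:N\to\CC^{\Ehor}$ is the projection $F\mapsto (\langle F,\lambda_e\rangle)_{e\in\Ehor}$. Then $\Ehor[F]=\operatorname{supp}(\pi(F))$, so $M$-primitive subsets of $\Ehor$ are precisely the minimal non-empty supports of vectors in $V$. A standard cocircuit elimination argument from matroid theory (see e.g.~Oxley, \emph{Matroid Theory}, Ch.~4) shows that the relation ``lying in a common minimal non-empty support'' is an equivalence relation on $\Ehor$, with equivalence classes $C_1,\dots,C_s$ coinciding with the $M$-classes, and furthermore $V$ decomposes as a direct sum $V=\bigoplus_{k=1}^s V_k$ with $V_k=V\cap\CC^{C_k}$.

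For the direction ``$M$-related $\Rightarrow$ rref-related'', I would use that each $\Ehor[F_j]$ lies in a single rref-class by the very definition of rref-equivalence. Writing any defining equation $F=\sum_j c_j F_j$ and grouping by the rref-class $D_\kappa$ containing $\Ehor[F_j]$, with a separate group for indices $j$ having $\Ehor[F_j]=\emptyset$, yields $F=G+\sum_\kappa F^{(\kappa)}$ with $\Ehor[G]=\emptyset$ and $\Ehor[F^{(\kappa)}]\subseteq D_\kappa$. Since the $D_\kappa$ are disjoint, $\Ehor[F]=\bigsqcup_\kappa \Ehor[F^{(\kappa)}]$. If $S=\Ehor[F]$ were $M$-primitive but met more than one rref-class, then some $F^{(\kappa)}$ would supply a defining equation with non-empty $\Ehor$ support strictly contained in $S$, contradicting primitivity. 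Hence every $M$-primitive set lies in a single rref-class, which gives the implication.

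For the converse, I would show that for each rref basis element $F_j$, the support $\Ehor[F_j]$ lies in a single $M$-class. Set $v_j:=\pi(F_j)$ and choose $k$ so that the pivot column $c_j$ of $F_j$ belongs to $C_k$. Using the decomposition of $V$ above, write $v_j=\sum_l v_j^{(l)}$ with $v_j^{(l)}\in V_l$. The component $v_j^{(k)}$ inherits from $v_j$ all the rref-defining properties at row $j$: leading coefficient $1$ at $c_j$ (since $c_j\in C_k$), vanishing at every other pivot column $c_{j'}$ (either because $v_j$ itself vanishes there, or because $c_{j'}$ lies outside $C_k$ and thus outside the support of $v_j^{(k)}$), and vanishing strictly to the left of $c_j$. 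Uniqueness of reduced row echelon form then forces $v_j^{(k)}=v_j$, so $v_j^{(l)}=0$ for $l\ne k$ and $\Ehor[F_j]=\operatorname{supp}(v_j)\subseteq C_k$. Consequently, rref-related nodes lie in the same $M$-class, completing the proof.

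The main obstacle is the matroid-theoretic input: both the transitivity of ``lying in a common minimal support'' and the linear decomposition $V=\bigoplus V_k$ rely on the cocircuit elimination axiom, and one has to check carefully that the resulting matroid components really coincide with the $M$-classes as defined in the text. Once this structural input is in place, both directions reduce to bookkeeping with the rref conditions, with the converse using essentially only uniqueness of rref and the disjointness of the supports of the pieces $v_j^{(l)}$.
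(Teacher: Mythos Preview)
Your first direction ($M$-related $\Rightarrow$ rref-related) is correct and is essentially the paper's argument for that direction, repackaged: both show that every $M$-primitive set lies in a single rref-class by grouping the rref basis according to rref-classes and using primitivity to rule out a nontrivial split.

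Your second direction has a genuine gap. You write ``choose $k$ so that the pivot column $c_j$ of $F_j$ belongs to $C_k$'', but the pivot of $F_j$ need not lie in $\Ehor$ at all. In the $\Gamma$-adapted basis the $\alpha$-cycles at a given level come after the $\delta$-cycles at that level but \emph{before} all cycles at lower levels, so an rref row can have an $\alpha$-pivot while still crossing horizontal nodes at lower levels. When this happens your choice of $k$ is undefined; and even if you try to use the first $\delta$-column appearing in $F_j$ instead, the projected vectors $\pi(F_l)$ are then no longer in rref inside $\CC^{\Ehor}$, so the uniqueness-of-rref step collapses. Concretely, take basis order $\delta_1,\delta_2,\alpha_1,\delta_3$ (with $\delta_1,\delta_2,\alpha_1$ at level $0$ and $\delta_3$ at level $-1$) and rref rows
\[
F_1=\delta_1+\delta_3,\qquad F_2=\delta_2+\delta_3,\qquad F_3=\alpha_1+\delta_3.
\]
Then $V=\pi(N)=\CC^{\{e_1,e_2,e_3\}}$, so every singleton is a minimal support and the $M$-classes are $\{e_1\},\{e_2\},\{e_3\}$, whereas the single rref-class is $\{e_1,e_2,e_3\}$. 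Thus the lemma is \emph{false} as pure linear algebra on an arbitrary rref matrix; your matroid decomposition is fine, but it cannot by itself force $v_j$ into a single $V_k$.

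The paper's proof of this direction takes a different route (it argues that each $\Ehor[F_j]$ is itself $M$-primitive), but it rests on the same unstated hypothesis: the sentence ``$\Ehor[F]$ must contain the horizontal node corresponding to the pivot of $F_l$'' presupposes that the pivot of $F_l$ is a $\delta$-column. What actually makes the lemma true in the paper's setting is the geometric input recorded later as \Cref{prop:eqsquantitative}(1): an rref equation $F_j$ crosses no horizontal nodes below $\topl(F_j)$. Granting this (its proof goes through \Cref{thm:decomp} and \Cref{lm:non-horiz} and does not use the present lemma), any $F_j$ with $\Ehor[F_j]\neq\emptyset$ must cross a horizontal node at its top level, and since $\delta$'s precede $\alpha$'s there, its pivot is a $\delta$-column. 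With that in hand, both your uniqueness-of-rref argument and the paper's primitivity argument go through verbatim.
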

\begin{proof}
We first claim that the set $\Ehor[F_j]$ is $M$-primitive for each $j=1,\dots,m$.
Assume for contradiction that there exists a defining equation $F$ with $\Ehor[F]\subsetneq \Ehor[F_j]$. If we write $F=\sum_{l=1}^{m} a_lF_l$ in terms of the rref basis, then assume for contradiction $a_l\ne 0$ for some $l\ne j$. But then $\Ehor[F]$ must contain the horizontal node corresponding to the pivot  of $F_l$, which is not contained in $\Ehor[F_j]$. This gives a contradiction, and thus \rrefequiv implies \Mequiv.

For the other direction, assume that $\Ehor[F]$ is $M$-primitive and $e,e'\in \Ehor[F]$.
Denote $E_0$ the rref-equivalence class of $e$, and reorder the rref basis as $F_1,\dots, F_{u}, F_{u+1},\dots, F_m$ so that $\Ehor[F_j]\subseteq E_0$ for $j\leq u$ and $\Ehor[F_j] \cap E_0=\emptyset$ for $j>u$, and again write $F$ as $F=\sum_{l=1}^{m} a_lF_l$.

We claim that $\Ehor[\sum_{l=1}^{u} a_lF_l]\subseteq \Ehor[F]$. Note that by construction $\Ehor[\sum_{l=1}^{u} a_lF_l]\subseteq E_0$. If the claim were  false,  there would exist a node crossed by  $\sum_{l=1}^{u} a_lF_l$ but not by $F$. But then one of the  $F_j$ with $j>u$ has   to cross a node in $E_0$, which is impossible.

Since $\Ehor[F]$ is $M$-primitive we conclude that $\Ehor[F]=\Ehor[\sum_{l=1}^{u} a_lF_l]$ and by construction any pair of nodes in $\Ehor[\sum_{l=1}^{u} a_lF_l]$ is rref-related.
\end{proof}

We now have the tools to prove \Cref{thm:Mrltd}, which says that \Mrel horizontal vanishing cycles have proportional periods.  Before doing this, we illustrate what this result means in a simple example.

\begin{exa}[$M$-cross-related nodes]
  \label{exa:cross}
Suppose that we have a linear submanifold~$M$ in the genus $2$ stratum $\Omega\calM_{2,2} (1,1)$, and that  $p_0\in\pM$ is as shown in \Cref{fig:Mrltd}, with three horizontal nodes $e_1,e_2,e_3$, and no other nodes.   We suppose that one of the defining equations of~$M$ at~$p$ is
$$
  c_ 1\int_{\delta_1}\omega + c_2 \int_{\delta_2}\omega + c_3 \int_{\delta_3}\omega = 0,
$$
where the $c_i$ are non-zero complex numbers.  And suppose that there is no other defining equation that crosses a proper subset of the horizontal vanishing cycles $\{\lambda_{e_1},\lambda_{e_2},\lambda_{e_3}\}$.  Then the nodes $e_1,e_2,e_3$ are \Mrel, and \Cref{thm:Mrltd} implies that the periods of~$\omega$ over $\lambda_{e_i}$ are all proportional on~$M$.

There  in fact exist affine invariant manifolds that locally have the above description near certain boundary points.  For instance, one can take the eigenform locus $E_D$ (discovered, independently, by Calta~\cite{ca_veech} and McMullen~\cite{mc_bill}), for $D\equiv 0,1 \bmod 4$ a non-square positive integer.
\end{exa}

\begin{figure}[h]
\includegraphics[scale=0.75]{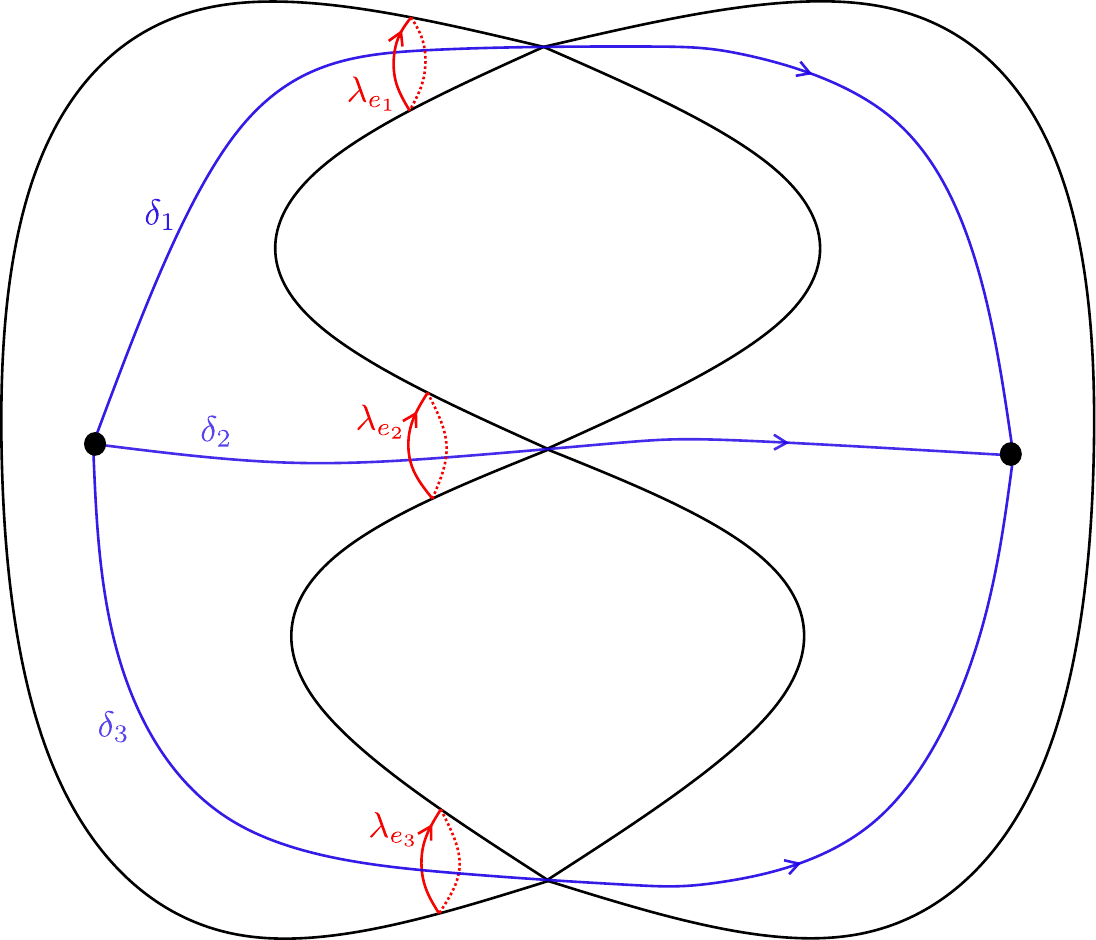}
\caption{A point in the boundary of $\Omega\calM_{2,2} (1,1)$, with three horizontal nodes.}
\label{fig:Mrltd}
\end{figure}
We are now ready to prove proportionality of periods over vanishing cycles for \Mrel nodes.
\begin{proof}[Proof of~\Cref{thm:Mrltd}]
The idea of the proof is to use \Cref{thm:divisorial} recursively to construct a suitable chain of divisorial undegenerations
\[
pt= \Gamma_0 \rightsquigarrow  \Gamma_1 \rightsquigarrow \cdots \rightsquigarrow \Gamma_d = \Gamma.
\]
As explained in \Cref{rem:chain}, many such chains can be constructed, and to specify a chain we need to specify at each step either a level transition that is smoothed or a horizontal node that is smoothed (in which case smoothing some other horizontal nodes may be required).

The main technical issue to deal with is choosing the appropriate chain of degenerations so that the monodromy argument can be applied. Recall that the results of~\cite{fred} apply to a rref basis of defining equations with respect to a chosen $\Gamma$-adapted homology basis. For an arbitrary chain of divisorial degenerations, it may happen that a homology basis that is $\Gamma$-adapted is not $\Gamma_j$-adapted for some $1\le j <d$. We will thus choose the chain of divisorial degenerations such that a homology basis can be chosen that is $\Gamma_j$-adapted simultaneously for all $0\le j\le d$. The chain of undegenerations that we choose is the following. Starting from $\Gamma_0=pt$, we first take degenerations that pinch horizontal nodes of top level in~$\Gamma$, i.e.~we take some $e_1\in \Ehori[0]$ and take the divisorial degeneration $\Gamma_0 \rightsquigarrow \Gamma_1$ that pinches $e_1$ --- which may possibly pinch some other horizontal nodes. If some top level horizontal node is not yet pinched, we take $e_2\in\Ehori[0]\setminus E(\Gamma_1)$, and take the divisorial degeneration $\Gamma_1\rightsquigarrow\Gamma_2$ that pinches $e_2$, and continue in such a way until $\Ehori[0]=E(\Gamma_k)$. We then let $\Gamma_{k+1}$ be the degeneration of $\Gamma_k$ that pinches the level transition between levels $0$ and $-1$, and then start pinching the horizontal nodes of level $-1$ in $\Gamma$ until all of $\Ehori[-1]$ is pinched, then pinch the level transition between levels $-1$ and $-2$, and so on.  We will want to track levels when we do this to eventually deal with a $\Gamma$-adapted basis.
We now choose a chain $pt= \Gamma_0 \rightsquigarrow  \Gamma_1 \rightsquigarrow \cdots \rightsquigarrow \Gamma_d = \Gamma$ of undegenerations satisfying all the conditions mentioned above and fix it for the rest of the proof.

The first issue is that a priori requiring some horizontal node to be pinched may lead to some other horizontal nodes at a different level beinf pinched. We prove that this is not the case. (In order to avoid confusion, in this proof the top level of a cycle always refers to the level with respect to $\lG$ and {\em not} with respect to some intermediate undegeneration).

{\bf Claim:} If $\Gamma_{i}\rightsquigarrow \Gamma_{i+1}$ is a purely horizontal divisorial degeneration appearing in the chain of degenerations $pt=\Gamma_0 \rightsquigarrow  \Gamma_1 \rightsquigarrow \cdots \rightsquigarrow \Gamma_d=\Gamma$ constructed above, then all horizontal nodes that are pinched in the degeneration $\Gamma_{i}\rightsquigarrow \Gamma_{i+1}$ have the same top level (in $\Gamma$).
\begin{proof}[Proof of claim]
Let $H(i,i+1)$ be the collection of horizontal nodes pinched in the degeneration $\Gamma_{i}\rightsquigarrow \Gamma_{i+1}$. By the discussion after the statement of \Cref{thm:divisorial}, we  know that for any pair of nodes  in $H(i,i+1)$ there exists a defining relation crossing exactly this pair of nodes. We then  apply the monodromy argument \Cref{prop:monodromy} to $\pMG[\lG_i]\subseteq \DG[\lG_i]$ and conclude that the vanishing cycles  of nodes in $H(i,i+1)$ are pairwise proportional on $\pMG[\lG_i]$.  By taking the limit of this proportionality relation to the boundary point $p_0\in \pMG$ we see that both horizontal nodes are of the same level in $\Gamma$, since otherwise the period over the vanishing cycle of one of the horizontal nodes would become zero which is impossible. This proves the claim.
\end{proof}
We now claim that there exists a $\Gamma$-adapted homology basis that is $\Gamma_j$-adapted for all $0\le j\le d$. Indeed, we first choose a $\Gamma$-adapted basis where the elements are ordered by level, with homology classes of higher top level appearing first. We then reorder the set of basis elements crossing horizontal nodes, so that they appear in the order in which the corresponding horizontal nodes are pinched in our chain of degenerations. That is, if $j< i$ then any cycle of the $\lG$-adapted basis crossing a node  in $\Ehor[\Gamma_j]$ is listed before any cycle crossing a node in $\Ehor[\Gamma_i]\setminus\Ehor[\Gamma_j]$. Note  that because of the claim proved above the reordering only changes the position of cycles of the same level so that cycles of higher levels still appear first. The resulting ordered $\lG$-adapted basis has the desired property that it is $\lG_j$-adapted for every $j$.
We then choose this ordered $\Gamma$-adapted homology basis, and take the rref basis of defining equations with respect to it, which will thus be a rref basis for every $\Gamma_j$ at once.

Since by \Cref{lm:Mrref}, \Mequiv and \rrefequiv are the same, it suffices to show that for any equation $F$ that is an element of the rref basis, the vanishing cycles of all horizontal nodes in $\Ehor[F]$ have proportional periods on~$M$.  Moreover, our proof will in fact give a certain rationality result: for any two nodes $e,e'\in\Ehor[F]$ there exist non-zero integers $n(e,e'), n'(e,e')$ such that
\begin{equation}\label{eq:constprop}
n(e,e')\lrra{F,\lambda_e}\int_{\lambda_e}\omega=n'(e,e')\lrra{F,\lambda_{e'}}\int_{\lambda_{e'}}\omega
\end{equation}
holds locally on~$M$ near~$p$. Let $H_i$ be the subset of $\Ehor[F]$ pinched at $\Gamma_i$, i.e. $H_i:=\Ehor[F]\cap E(\Gamma_i)$.

We will prove by induction on $i$ that the periods of~$\omega$ over all the vanishing cycles corresponding to nodes in $H_i$ are pairwise proportional on~$M$.  Since $H_d= \Ehor[F]$, the theorem will then follow.  For the base case, we take the smallest $j>0$ such that $H_j\ne\emptyset$.   Applying the monodromy argument,  ~\Cref{prop:monodromy}, to the equations lost at the degeneration $\Gamma_{j-1} \rightsquigarrow \Gamma_j$ shows then that for any two nodes in $H_j$, the periods over the corresponding vanishing cycles are proportional on $\pMG[\lG_{j-1}]$. Denote $G$ this proportionality, considered as one of the defining linear equations of $\pMG[\lG_{j-1}]$. Note that $G$ is completely contained in the bottom level, since it is a relation between vanishing cycles contained in the bottom level. Thus by \Cref{rem:lift} we can lift the linear equation $G$ for $\pMG[\lG_{j-1}]$ to a local defining equation $G'$  for $M$. A priori, $G'$ could have additional terms of lower levels that disappear when restricting to $\pMG[\lG_{j-1}]$. However, by construction of the chain of degenerations and by our choice of~$j$, $\topl(F)=\topl(G)=-L(\Gamma_{j-1})$ is equal to the bottom level of the graph $\lG_{j-1}$. Thus there are simply no levels below that level, and thus $G'$ cannot have any additional summands at lower level. Thus it follows that $G'$ is  a proportionality on $M$ of these two nodes in~$H_j$, which is thus one of the defining equations of~$M$. This concludes the proof of the base case of the induction.

For the inductive step, we will prove the proportionality of periods of $\omega$ over the vanishing cycles for any two nodes in $H_{i+1}$, assuming that this holds for any pair of nodes in $H_i$. This statement is vacuously true if $H_i=H_{i+1}$, so the inductive step holds automatically unless $i\ge 1$ and $\Gamma_i\rightsquigarrow \Gamma_{i+1}$ is a purely horizontal degeneration. For such an undegeneration, denote $\lbrace h_1,\dots,h_k\rbrace:=E(\Gamma_{i+1})\setminus E(\Gamma_{i})$ the new nodes that are pinched, all of which are horizontal. Since the degeneration $\Gamma_i \rightsquigarrow \Gamma_{i+1}$ is divisorial, the number of defining equations of~$M$ lost on $\Gamma_{i+1}$ in addition to those lost on~$\Gamma_{i}$ must be equal to $k-1$, one less than the number of horizontal nodes pinched.  Each of the lost equations has a pivot variable in the \rref, and by \Cref{thm:divisorial} there exists exactly one horizontal node among the $h_1,\ldots,h_k$ (which by renumbering the $h_j$ we will assume to be $h_k$) that is not a pivot for any of the lost equations. Then $h_1,\ldots,h_{k-1}$ all correspond to pivots of the lost equations, and since all the equations are in \rref, we see that $F$ cannot cross any of these, since~$F$ was lost at the base case degeneration.  On the other hand, $H_i\subsetneq H_{i+1}$ means that $\Ehor[F]$ must contain some $h_j$, and thus we must have $h_k\in\Ehor[F]$, and $H_{i+1}=H_i\sqcup \lbrace h_k\rbrace$.

We now apply the monodromy argument to $F$ with respect to the boundary stratum $\DG[\Gamma_{i+1}]$ considered within the closure boundary stratum $\overline{\DG[\Gamma_{j-1}]}$, where $j$ was the index introduced in the base case. This is to say, we consider monodromy around the vanishing cycles for the nodes in $E(\Gamma_{i+1})\setminus E(\Gamma_{j-1})$, obtaining in this way a defining equation $G$ of $\pMG[\lG_{j-1}]$ that is a linear combination of periods over the vanishing cycles in $H_i \sqcup \{h_k\}$ and possibly also periods over some vertical vanishing cycles crossed by $F$, with all the coefficients non-zero. Note that all these vanishing cycles are of level $-L(\Gamma_{j-1})$ when considered in $\Gamma_{j-1}$ and thus the equation $G$ can be lifted to a linear equation on $M$; which by abuse of notation we will denote $G$ again.
First consider the case where all the degenerations $\Gamma_{j-1}\rightsquigarrow \cdots\rightsquigarrow\Gamma_{i+1}$ are purely horizontal. In this case $\Gamma_{i+1}$ has the same number of levels as $\Gamma_{j-1}$ and $F$ does not cross any vertical vanishing cycles.
By induction, we already know that the periods of~$\omega$ over the vanishing cycles of nodes in $H_i$ are pairwise proportional on~$M$, and substituting this into~$G$ implies that the period of $\omega$ over the remaining vanishing period $\van[h_k]$  is also proportional to these, on $\pMG[\lG_i]$. The coefficient of $\int_{\van[h_k]}\omega$ in~$G$ is given by monodromy, and is thus non-zero, so the period $\int_{\van[h_k]}\omega$ is a non-zero multiple of the period over the vanishing cycle for any node in $H_i$. Proceeding exactly as in the base case of induction, we conclude that this proportionality is actually satisfied on~$M$, and not only on $\pMG[\lG_i]$. Tracing through the argument, we see that the proportionality relations are of the form \eqref{eq:constprop}.

It remains to treat the case where some undegeneration in the chain $\Gamma_{j-1}\rightsquigarrow \cdots\rightsquigarrow\Gamma_{i+1}$ is vertical. In this case the application of the monodromy argument to $F$ might pick up additional contributions from the vertical vanishing cycles that $F$ crosses.
Note that the contributions from vertical vanishing cycles are (up to multiplication by a constant) independent of the order in which undegenerations are performed and which boundary point is chosen to apply the monodromy argument; this follows from  the computation leading to \eqref{eq:vereq} and is incorrect for horizontal vanishing cycles.  Thus to see
to see that  contribution from vertical vanishing cycles vanishes we can, starting from $\Gamma_{j-1}$, only perform vertical degenerations and then the contributions vanish by \eqref{eq:vereq}.
\end{proof}
\begin{cor}\label{cor:Mrltdlvl}
Any pair of \Mrel nodes has the same level.
\end{cor}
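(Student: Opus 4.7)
The plan is to derive the statement directly from \Cref{thm:Mrltd}, whose proof established not just proportionality of periods but non-triviality of the proportionality constants. By transitivity of the equivalence relation generated by primitive $M$-correlated sets, it suffices to prove the claim for two nodes $e,e'$ lying in a common primitive subset of $\Ehor$. In that case, the rationality statement in the proof of \Cref{thm:Mrltd} (see~\eqref{eq:constprop}) yields an identity
$$
n\,\langle F,\lambda_e\rangle\int_{\lambda_e}\omega \;=\; n'\,\langle F,\lambda_{e'}\rangle\int_{\lambda_{e'}}\omega
$$
on $M$ in a neighborhood of $p$, with non-zero integer constants $n,n'$ and non-zero intersection numbers (since both nodes lie in $\Ehor[F]$).

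Next, I would pass to a holomorphic disk $f\colon\Delta\to\oM$ with $f(0)=p_0$ and $f(\Delta^\ast)\subset M\cap U^\circ$, as in the monodromy argument of \Cref{prop:monodromy}. Along such a family I would compare the vanishing orders of $\int_{\lambda_e}\omega$ and $\int_{\lambda_{e'}}\omega$ as $z\to 0$. For a horizontal node $e$ at level $i$ in $\lG$, the period equals, up to a non-zero factor, the residue of the component differential $\eta_v$ at $e$, multiplied by the rescaling of $\omega$ on $X_{(i)}$ induced by plumbing; by the structure of multi-scale differentials and the relation~\eqref{eq:se}, this rescaling is a monomial in the level parameters $t_j$ for $j>i$ whose vanishing order along the family is strictly larger for deeper (more negative) levels $i$. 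Since the residues of the $\eta_v$ at horizontal nodes are non-zero (simple poles), the vanishing orders of $\int_{\lambda_e}\omega$ and $\int_{\lambda_{e'}}\omega$ are determined by, and strictly monotone in, the respective levels of $e$ and $e'$.

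If the levels of $e$ and $e'$ were different, the two sides of the displayed proportionality would have different vanishing orders at $0$, which is impossible for a non-trivial identity with non-zero constants $n\langle F,\lambda_e\rangle$ and $n'\langle F,\lambda_{e'}\rangle$. Hence $e$ and $e'$ must lie at the same level, and the general case follows by transitivity.

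The only genuinely delicate point is the level-dependent vanishing order of horizontal periods near $p_0$; this is, however, a direct consequence of the explicit plumbing description of multi-scale differentials, so no new technical input is needed beyond what is already recalled in \Cref{sec:notation}.
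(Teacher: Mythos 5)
Your proof is correct and uses essentially the same core idea as the paper: a non-trivial proportionality between periods over horizontal vanishing cycles forces them to degenerate at the same rate near $p_0$, and since that rate is governed by the level (via the plumbing rescaling factor $t_{\lceil i\rceil}$ and the non-vanishing of the residues of the twisted differential at horizontal nodes), the two nodes must share a level. The paper states this more briefly by passing directly to the rescaled limits of the periods, which are the residues of the twisted differential, and observing that if the levels differed the ratio of periods would tend to zero, contradicting the constant non-zero coefficient of proportionality; you instead fix a holomorphic disk and compare vanishing orders, which is a slightly more explicit packaging of the same comparison. Your preliminary reduction to two nodes in a common $M$-primitive set is unnecessary given that \Cref{thm:Mrltd} already yields proportionality for an arbitrary \Mrel pair, but it is harmless and correctly argued.
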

\begin{proof}
Let $e,e'\in \Ehor$ be a pair of \Mrel nodes, so that by~\Cref{thm:Mrltd} the periods over the corresponding vanishing cycles are proportional on~$M$. The rescaled limits of these periods are the (non-zero) residues of the twisted differential at the corresponding nodes. If one node is lower than the other, by definition of a multi-scale differential compatible with a level graph this means that the limit of the ratio of these residues must be equal to zero. Since the residues are proportional with a constant coefficient, this means that both residues must be identically zero, which is impossible, as the multi-scale differential must have simple poles at all horizontal nodes by definition.
\end{proof}
We now investigate defining equations that are  \nhces. We recall that an equation $F$ is called \nhc if it does not cross any horizontal nodes of level $\topl(F)$. This allows the possibility that $F$ might cross horizontal nodes of levels below $\topl(F)$.
\begin{lm} \label{lm:non-horiz}
Let $F$ be a defining equation of~$M$ that does not cross any horizontal nodes at level $\topl(F)$, but crosses some horizontal node $e\in\Ehori$ at level $i<\topl(F)$.  Then $F$ can be written as the sum $F=H+G$ of defining equations such that $\topl(G)=\topl(F)>\topl(H)$, where $G$ crosses no horizontal nodes, at any level, and $H$ is \thc with $\topl(H)$ being the maximal level of a horizontal node crossed by $F$.
\end{lm}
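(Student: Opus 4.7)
I would argue by induction on the gap $d := \topl(F) - j$, where $j := \max\{\ell(e) : e \in \Ehor[F]\}$ is the maximal level of a horizontal node crossed by $F$; by hypothesis $d \geq 1$. The strategy is to first construct a defining equation $G \in N$ with $\topl(G) = i_0 := \topl(F)$ and no horizontal crossings at any level, then set $H := F - G$, invoking the induction hypothesis on $H$ if needed.

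Construction of $G$. Since $F$ is a \nhcc at level $i_0$, its top-level restriction $\toplr[F]$ is a well-defined element of $\hlgr[i_0]$ with no horizontal crossings at level $i_0$, and by \Cref{thm:fred} it is a defining equation of $\pMG$ inside $\DG$. Being concentrated in the single level $i_0$, by \Cref{rem:lift} it lifts to a defining equation $G_0 \in N$ of $M$ at a point near $p$, with $(G_0)_\top = \toplr[F]$ and $\topl(G_0) = i_0$. I then modify $G_0$ to eliminate its horizontal crossings level by level, from $i_0 - 1$ down to $-L(\lG)$: at each level $i$, the horizontal crossings of the current $G_0$ at level $i$ distribute among the \Mequiv classes of horizontal nodes at level $i$, and for each such class \Cref{thm:Mrltd} supplies (via pairwise proportionality of periods over its vanishing cycles) \thc defining equations at level $i$ supported only on that class; a suitable linear combination of these cancels the $i$-th horizontal content of $G_0$. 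Each such correction has top level $\leq i < i_0$, so neither the top-level restriction nor the horizontal crossings at levels above $i$ are affected. After the downward sweep the resulting $G \in N$ has $\topl(G)=i_0$, $G_\top = \toplr[F]$, and no horizontal crossings at any level.

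Setting $H := F - G$ and concluding. One has $H \in N$, $H_\top = \toplr[F] - \toplr[F] = 0$, so $\topl(H) < i_0$; and $\Ehor[H] = \Ehor[F]$ since $G$ has no horizontal crossings, so $H$ crosses a horizontal node at level $j$ and $\topl(H) \geq j$. If $\topl(H) = j$, then $H$ is a \hcc with $\topl(H)=j$ and $F = H + G$ is the desired decomposition. Otherwise $j < \topl(H) < i_0$, and at its top level $H$ is a \nhcc (since neither $F$ nor $G$ has horizontal crossings strictly above level $j$); the gap $\topl(H) - j$ is strictly less than $d$, so by the induction hypothesis $H = H' + G'$ with $H'$ a \hcc satisfying $\topl(H')=j$ and $G'$ having no horizontal crossings. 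Then $F = H' + (G + G')$ satisfies the conclusion.

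The main obstacle is the level-by-level cancellation in the construction of $G$. Its essential input is \Cref{thm:Mrltd}: the \Mequiv-class proportionalities yield exactly the \thc defining equations needed to absorb arbitrary horizontal content of a lift. A secondary subtlety is verifying that the corrections at level $i$ do not re-introduce horizontal crossings at higher levels, but this is automatic since each correction has top level $\leq i$.
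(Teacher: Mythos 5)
Your proof correctly identifies the target ($G$ with $(G)_\topl = F_\topl$, no horizontal crossings) and the first step (lift $F_\topl$ via \Cref{rem:lift}), but the ``downward sweep'' to strip horizontal crossings from the arbitrary lift $G_0$ contains a genuine gap. You assert that at each level $i$ the horizontal content of $G_0$ within each \Mequiv class lies in the span of the pairwise proportionality relations $\lambda_e - c\lambda_{e'}$ supplied by \Cref{thm:Mrltd}, so that a suitable linear combination cancels it. This is not justified and not obviously true: for a class $\{e_1,\dots,e_r\}$, those proportionalities span only an $(r-1)$-dimensional subspace of the $r$-dimensional space of possible crossing coefficients, and nothing said so far pins the crossing coefficients of $G_0$ into that hyperplane. (It would follow from \Cref{prop:eqsquantitative}(1) together with the rref structure, but that proposition is proved \emph{from} \Cref{thm:decomp}, which in turn rests on the very lemma you are proving -- so invoking it here would be circular.) Also note that $G_0$ could cross only a \emph{single} horizontal node at some level $i$, a configuration that no linear combination of two-node proportionalities can cancel; ruling this out again requires constraints on $G_0$ that are not available at this point.

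The paper's argument avoids this difficulty by never producing an arbitrary lift and then attempting to sanitize it. Instead it builds, via \Cref{rem:chain}, a chain of divisorial undegenerations
\[
pt=\lG'_0\rightsquigarrow\dots \rightsquigarrow\lG'_k\rightsquigarrow\lG'\rightsquigarrow\lG
\]
that first pinches all horizontal nodes of $\Ehor[F]$ one batch at a time (each $\lG'_j$ purely horizontal), then closes the single level transition between $\topl(F)$ and $\topl(F)-1$ to form $\lG'$. Since $F$ is non-top-horizontal-crossing for $\lG'$, its top-level restriction is a defining equation of $\pMG[\lG']$, and by iterating \Cref{thm:fred} back down the chain -- where at each horizontal step only equations crossing \emph{no} horizontal nodes of $\lG'_j$ can survive -- one sees that this equation is induced from a defining equation $G_0$ of $M$ that crosses no horizontal nodes at all. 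The lift is therefore clean by construction, and one only needs to note $\topl(F - G_0) < \topl(F)$ and iterate. If you want to salvage your version, you would need to replace the proportionality-cancellation step with an argument of this flavor that controls where the lift can cross, rather than trying to fix the lift after the fact.
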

\begin{proof}
Using \Cref{rem:chain}, we construct the following chain of (divisorial except for the last two) undegenerations:
\[
  pt=\lG'_0\rightsquigarrow\dots \rightsquigarrow\lG'_k\rightsquigarrow\lG'\rightsquigarrow\lG
\]
Here each $\lG'_i\rightsquigarrow\lG'_{i+1}$ is a divisorial degeneration pinching some horizontal node in $\Ehor[F]$, and we perform such divisorial degenerations until all nodes in $\Ehor[F]$ are pinched, i.e. $E(\lG'_k)=\Ehor$. Then $\Gamma_k'\rightsquigarrow\Gamma'$ is the purely vertical degeneration that closes the level passage between $\topl(F)$ and $\topl(F)-1$. Finally, $\lG'\rightsquigarrow\lG$ is the remaining degeneration, that closes all other level passages and all other horizontal nodes of~$\lG$.

Then~$F$ is \thc for all $\lG'_j$ but not for $\lG'$. Since every defining equation for $\pMG[\lG']$ is induced by an equation of $\pMG[\lG'_k]$, and every defining equation of $\pMG[\lG'_{j+1}]$ is induced by an equation of $\pMG[\lG'_j]$, it follows that each defining equation of $\pMG[\lG']$ is induced from a defining equation of~$M$ at~$p$ that does not cross {\em any} horizontal nodes. Thus we can find an equation $G_0$ with the same top level restriction as $F$, i.e. $(G_0)_{\topl}=F_{\topl}$, but such that $G_0$ crosses {\em no} horizontal nodes. In particular then $\topl(F-G_0)<\topl(F)$. Now either $F-G_0$ is \thc, or we can proceed inductively and find $G$ as desired.
\end{proof}

We can now prove the decomposition of the linear equations.
\begin{proof}[Proof of~\Cref{thm:decomp}]
We proceed by induction on $\#\Ehor[F]+\topl(F)$. If $F$ crosses no horizontal nodes, we set $G:=F$ and are done. Otherwise, if all nodes in $\Ehor[F]$ are at  levels strictly below $\topl(F)$, we write $F=H+G$ as provided by \Cref{lm:non-horiz}, and apply the induction hypothesis on $H$.

We are thus left with the case that $F$ crosses some horizontal node $e\in \Ehor[F]\cap\Ehori[\topl(F)]$.  Given any defining equation of~$M$, by the definition of primitivity, there exists some primitive equation $P$ that crosses a subset of the horizontal nodes crossed by the original equation, i.e. $\Ehor[P]\subseteq \Ehor[F]$ and $\topl(P) \ge \topl(F)$. By \Cref{lm:non-horiz}, we can further assume that $\topl(P)=\topl(F)$.  Then there exists a constant $c\in\CC^*$ such that either $F=c P$ or $\Ehor[F-cP]\subsetneq \Ehor[F]$. We can then apply the induction to $F-c P$, and since $\Ehor[P],\Ehor[F-c P]\subseteq \Ehor[F]$, condition $(2)$ of the statement of the Theorem will be satisfied.
\end{proof}

\smallskip
While all the above statements were for arbitrary defining equations, for the rref basis $F_1,\dots,F_m$ we can obtain more precise results, determining the coefficients of the equations explicitly. While this, our most precise, result, is more technical, it will be crucial in enabling us to compute the analytic equations of~$M$ in plumbing coordinates in~\Cref{sec:transverse}, and in particular prove~\Cref{thm:transverse}.

\begin{prop} \label{prop:eqsquantitative}
Let $F_1,\dots,F_m$ be the rref basis, written as in~\eqref{eq:Fformsimple}. Then
\begin{enumerate}
\item each $F_l$ does not cross any horizontal node at level below $\topl(F_l)$.
\item If $F_l$ crosses $e,e'\in\Ehori[\topl(F)]$, then there exist two non-zero integers $n_1,n_2\in\ZZ$ such that the equation
\[n_1\lrra{F_l,\lambda_e}\int_{\van[e]}\omega=n_2\lrra{F_l,\lambda_{e'}}\int_{\van[e']}\omega\] holds on~$M$ in a neighborhood of $p$.
\item For any level $i$, the equation \[
R_i(F_l)=\sum_{\substack{e\,:\,\ltop\ge i>\lbot}}m_{e,i}\langle F_l,\lambda_e\rangle\int_{\lambda_{e}}\omega =0
\] holds on $M$ in a neighborhood of $p$, where we recall $m_{e,i}$ defined in \eqref{eq:se}.
\end{enumerate}
\end{prop}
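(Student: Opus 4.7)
The plan is to address the three parts separately. Parts (3) and (2) are essentially direct corollaries of results already proved in the paper, while part (1) will require an induction that couples the \rref structure with \Cref{lm:non-horiz}.

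For part (3), the identity $R_i(F_l)=0$ on $M$ near $p$ is exactly equation \eqref{eq:vereq}, which was derived in the proof of \Cref{thm:vertres} by applying the monodromy argument \Cref{prop:monodromy} to the undegeneration $\lG[i]$ and using \eqref{eq:se} to pin down the coefficients $m_{e,i}$; specializing to $F=F_l$ gives the claim, and the sum is vacuous for levels $i$ at which no vertical node of $\lG$ satisfies $\ltop\ge i>\lbot$. For part (2), any two nodes $e,e'\in\Ehor[F_l]$ are trivially \rrefrel via $F_l$, hence \Mrel by \Cref{lm:Mrref}, and the integer proportionality \eqref{eq:constprop} established in the proof of \Cref{thm:Mrltd} produces the required non-zero integers $n_1,n_2$.

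For part (1), part (2) together with \Cref{cor:Mrltdlvl} implies that all nodes of $\Ehor[F_l]$ share a common level $i^*=i^*(F_l)$ (if $\Ehor[F_l]\ne\emptyset$), and clearly $i^*\le \topl(F_l)$ since any representing path must reach that level. I would prove $i^*=\topl(F_l)$ by induction on $\topl(F_l)$. Suppose $\topl(F_l)=t>i^*$. Since $F_l$ has no level-$t$ $\delta$-coefficients, \Cref{lm:non-horiz} applies and decomposes $F_l=H+G$ with $\Ehor[G]=\emptyset$, $\topl(G)=t$, and $H$ \thc with $\topl(H)=i^*<t$. The pivot of $F_l$ sits at some $\alpha$-position at level $t$, and because $H$ is supported in levels $\le i^*$, the equation $G$ has the same pivot position. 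Expanding $G=\sum_k c_k F_k$ and examining pivot columns: (i) $c_l=1$ from the pivot of $F_l$; (ii) $c_{k_0}=0$ whenever $\topl(F_{k_0})>i^*$, since both $F_l$ (for $k_0\ne l$) and $H$ vanish at $p_{k_0}$; (iii) $c_{k_0}=0$ whenever $p_{k_0}$ is a $\delta$-position, since $\Ehor[G]=\emptyset$ forces $G$ to vanish there. Consequently
\[
H=F_l-G=-\sum_{\substack{k\ne l,\; \topl(F_k)\le i^*,\\ p_k \text{ at an }\alpha}} c_k F_k.
\]
By the inductive hypothesis each $F_k$ in this sum satisfies $\Ehor[F_k]\subseteq\Ehori[\topl(F_k)]$; since its pivot is at an $\alpha$-position and $\delta$'s precede $\alpha$'s in the $\lG$-adapted ordering within each level, $F_k$ has vanishing $\delta$-coefficients at level $\topl(F_k)$, so $\Ehor[F_k]=\emptyset$. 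Therefore $H$ crosses no horizontal nodes, contradicting that $H$ is \thc at level $i^*$.

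The main obstacle is the pivot bookkeeping in part (1): one must carefully distinguish $\delta$- from $\alpha$-pivots at each level and use the rref constraints simultaneously at positions strictly above level $i^*$ (to kill $c_k$ for high-pivot equations) and at all $\delta$-pivots (to kill $c_k$ for horizontal-\thc equations). The induction has no genuine base case issue: when $\topl(F_l)$ is the smallest top level among rref equations, the sum expressing $H$ is empty, so $H=0$, directly contradicting that $H$ is \thc.
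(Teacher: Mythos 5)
Your handling of parts (2) and (3) matches the paper exactly: both are direct citations of \eqref{eq:constprop} and \eqref{eq:vereq} respectively, which is also what the paper does.

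For part (1), your argument is correct but genuinely different in logical structure from the paper's. The paper applies \Cref{thm:decomp} to write $F_l = H_1+\dots+H_k+G$, picks $H_1$ with $\topl(H_1)<\topl(F_l)$, expands $H_1=\sum a_k F_k$, and then locates some $F_j$ with $a_j\ne 0$ whose pivot is a $\delta$-coordinate; the corresponding node then lies in $\Ehor[H_1]\subseteq\Ehor[F_l]$, which is impossible because a pivot column of $F_j$ must vanish in $F_l$. This is a direct (non-inductive) contradiction, but it requires a careful choice of $j$ (one with $\topl(F_j)=\topl(H_1)$ contributing to the $\delta$-coordinates there) so that the pivot really is a $\delta$-position. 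You instead induct on $\topl(F_l)$, use \Cref{lm:non-horiz} directly to get the two-term split $F_l=H+G$, expand $G=\sum c_kF_k$, and show via rref pivot bookkeeping that $H=-\sum c_kF_k$ ranges only over $F_k$ with $\alpha$-pivot and $\topl(F_k)\le i^*$; the inductive hypothesis then forces each such $F_k$ to cross no horizontal nodes, so $H$ crosses none, contradicting that $H$ is \thc. Your route buys a cleaner logical skeleton (the induction makes the dependence on smaller top levels explicit, avoiding the implicit selection the paper's proof requires), at the cost of somewhat more pivot bookkeeping. Both proofs tacitly rely on the $\lG$-adapted basis being ordered by descending top level with $\delta$'s preceding $\alpha$'s within each level, and on the fact that a class of top level $\le i^*$ has vanishing coordinates on all basis elements of level $>i^*$; neither of you states this, but it is the intended convention.

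One small imprecision in your write-up: you cite ``part (2) together with \Cref{cor:Mrltdlvl}'' to conclude that all of $\Ehor[F_l]$ lies at a single level $i^*$. As stated, part (2) already assumes $e,e'\in\Ehori[\topl(F)]$, so it does not by itself give this. What you actually need (and implicitly use) is that $\Ehor[F_l]$ is $M$-primitive — this is the first claim established in the proof of \Cref{lm:Mrref} — so that any two nodes in $\Ehor[F_l]$ are \Mrel and hence at the same level by \Cref{cor:Mrltdlvl}. Replacing the citation fixes this; the rest of the argument goes through unchanged.
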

\begin{proof}
We first prove $(1)$. If $F_l$ crosses any horizontal nodes of level below $\topl(F)$, then consider the decomposition $F_l=H_1+\dots+ H_k+G$ provided by \Cref{thm:decomp}. After reordering the $H_i$ we can assume that $\topl(H_1)<\topl(F)$. Writing $H_1=\sum_{k}a_kF_k$, it follows then that $\topl(F_k)<\topl(F_l)$ whenever $a_k\neq 0$. Furthermore, there must exist $j$ such that $a_j\neq 0$ and $F_j$ crosses some horizontal nodes. Let $e$ be the horizontal node corresponding to the pivot of $F_j$. Then $e\in \Ehor[H_1]\subseteq \Ehor[F_l]$, which is a contradiction since the pivot node can only appear in $F_j$, and in no other equation of the rref basis.

The proof of $(2)$ was the content of \eqref{eq:constprop}, while the statement $(3)$ was proved already in the  proof of \Cref{thm:vertres}.
\end{proof}

\section{Equations near the boundary in plumbing coordinates}\label{sec:transverse}
Using the restrictions on linear equations obtained in \Cref{prop:eqsquantitative}, we can now convert the linear equations in period coordinates into analytic equations in plumbing coordinates and thus prove~\Cref{thm:localequations}. The most precise technical result that we prove in this direction is \Cref{prop:locirred}.

\subsection{Converting equations from period to plumbing coordinates}

While periods of the differential are not globally well-defined on~$U$, recall that in~\cite{fred} the so-called \emph{log periods} were defined (these are related to perturbed periods of \cite{BCGGMmsds}, and to the expressions for periods used in \cite[Lemma 3.8]{ben}). These are well-defined analytic functions on~$U$, obtained by subtracting logarithmic terms, as we now recall. As always, we work in the neighborhood $U$ of $p_0\in\pM$, and consider defining equations of~$M$ at a smooth point $p=(X,\omega)\in M\cap U$; now we will also fix a class $\gamma\in \relhomZ$. Recall that coordinates on $U$ are given by $b:=(\eta,\underline{t},\underline{h})$ where $\eta\in\DG$ can be thought of as a twisted differential, and thus local coordinates for $\eta$ are given by its periods, $\underline{t}=\lbrace t_{-1},\dots,t_{-L(\lG)}\rbrace$ are the level scaling parameters, and $\underline{h}=\lbrace h_e\rbrace_{e\in\Ehor}$ are the plumbing parameters at horizontal nodes. The {\em log period} of $\omega$ along $\gamma$ is defined as
\[
\logP(\omega):= \dfrac{1}{\scl[{\topl(\gamma)}]}\left[ \int_{\gamma} \omega - \sum_{e\in E} \lrra{\gamma,\lambda_e} r_e(\omega)\ln(s_e)\right ]\,,
\]
where $r_e(\omega):= \dfrac{1}{2\pi i}\int_{\lambda_e}\omega$ and, as in~\cite{BCGGMmsds}, we denote
\[
\scl[{i}]:= \prod_{k=-i}^{-1} t_i^{a_i},
\]
with the $a_i$ defined by \eqref{eq:se}. Here $\gamma$ is extended smoothly to nearby curves using the Gauss-Manin connection.  A priori, this might not be well-defined because of the monodromy of the Gauss-Manin connection, but the logarithmic terms are chosen exactly to cancel out this monodromy, which yields

\begin{prop}[{\cite[Thm. 5.2]{fred}}]\label{prop:logp}
The log period $\logP$ is a well-defined analytic function on $U$. Furthermore, if $\gamma$ is non-horizontal, then
\[
\logP(b)= \int_{\gamma}\eta+ H(b)\,,
\]
where $H$ is an analytic function on $U$ that vanishes identically on $\DG\cap U\subset U$.
\end{prop}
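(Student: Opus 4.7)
The plan is threefold: establish single-valuedness via a Picard--Lefschetz monodromy cancellation, extend the bracketed expression analytically across the boundary using the plumbing description of the family, and identify the leading term in the non-horizontal case.

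First, $\int_\gamma\omega$ is a priori multi-valued on $U^\circ$ because of Gauss--Manin monodromy. By Picard--Lefschetz, transporting $\gamma$ once around the divisor $\{s_e=0\}$ sends $[\gamma]\mapsto[\gamma]+\langle\gamma,\van\rangle\van$, so $\int_\gamma\omega$ acquires the additive term $\langle\gamma,\van\rangle\int_{\van}\omega=2\pi i\langle\gamma,\van\rangle r_e(\omega)$. Since $\ln(s_e)$ transforms by $+2\pi i$ under the same loop, the subtracted term $\langle\gamma,\van\rangle r_e(\omega)\ln(s_e)$ has exactly the same monodromy, and the identical argument applies at horizontal nodes with $h_e$ in place of $s_e$. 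Therefore the bracketed expression in the definition of $\logP$ is single-valued and holomorphic on $U^\circ$.

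Next, to extend across the boundary, I would use the plumbing construction of~\cite{BCGGMmsds}. Near each node $e$ the family is modeled on $uv=s_e^{\kappa_e}$ (after a prong-matching), and in these coordinates $\omega=r_e(\omega)\,du/u+(\text{terms analytic in }u\text{ and the moduli})$. Choosing a representative of $\gamma$ supported on the complement of small neighborhoods of the nodes, together with a short arc traversing each node the cycle crosses, one obtains a decomposition of $\int_\gamma\omega$ into an analytic smooth piece and, at each node, a nodal contribution equal to $\langle\gamma,\van\rangle r_e(\omega)\ln(s_e)$ plus a function analytic at $s_e=0$. Summing these contributions, subtracting the logarithms as in the definition of $\logP$, and dividing by $\scl[\topl(\gamma)]$---which is the natural scaling of $\omega$ on level $\topl(\gamma)$ induced by the $t_i$---produces a function that extends holomorphically to all of~$U$.

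Finally, if $\gamma$ is non-horizontal, then no horizontal vanishing cycle at level $\topl(\gamma)$ is crossed, and when restricted to the top-level component, the differential $\omega$ equals $\scl[\topl(\gamma)]\,\eta_v$ plus subleading terms in the plumbing parameters. Integrating along $\gamma$ and dividing by $\scl[\topl(\gamma)]$ then gives $\logP(b)=\int_\gamma\eta+H(b)$, where $H(b)$ collects the subleading analytic corrections, all of which vanish when $\underline{t}=0$ and $\underline{h}=0$, i.e.\ on $\DG\cap U$. The main obstacle is the careful bookkeeping of the plumbing expansion at vertical nodes across several levels, where the relation $s_e^{\kappa_e}=\prod_i t_i^{m_{e,i}}$ entangles several level parameters and a prong matching; verifying that after subtracting \emph{all} logarithmic terms and dividing by $\scl[\topl(\gamma)]$ the remainder is genuinely holomorphic (and not merely continuous) at the deepest boundary strata requires a uniform estimate on the analytic remainders of the plumbing expansion, which is the technical core of the argument in~\cite{fred}.
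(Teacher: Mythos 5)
The paper does not prove this proposition; it is cited verbatim from the first author's earlier work \cite[Thm.~5.2]{fred}, so there is no in-text proof to compare against. Judged on its own merits, your sketch identifies the correct three-step structure (monodromy cancellation, plumbing expansion, rescaling by $t_{\lceil \ell(\gamma)\rceil}$) and the Picard--Lefschetz cancellation argument for single-valuedness is the right idea. Two places, however, are imprecise enough to count as gaps in what you have written, and a third is a deferral you acknowledge.

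First, for vertical nodes $s_e$ is \emph{not} a local coordinate: the actual coordinates transverse to $\DG$ are the level parameters $\underline{t}$ and the horizontal parameters $\underline{h}$, and $s_e$ is a $\kappa_e$-th root of the monomial $\prod_i t_i^{m_{e,i}}$ from~\eqref{eq:se}. So ``monodromy around $\{s_e=0\}$'' is not well defined; what you must compute is the monodromy around each divisor $\{t_i=0\}$, which by Picard--Lefschetz adds $\sum_{e\colon \ltop>i\ge\lbot} m_{e,i}\langle\gamma,\lambda_e\rangle\lambda_e$ to $\gamma$, and then verify that the chosen branch of $\ln(s_e)=\frac{1}{\kappa_e}\sum_i m_{e,i}\ln t_i$ shifts by exactly the matching amount. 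Getting the $\kappa_e$ and $m_{e,i}$ factors to line up is not automatic from the naive ``$\ln s_e$ shifts by $2\pi i$'' statement, and is precisely where the specific normalization $r_e(\omega)=\frac{1}{2\pi i}\int_{\lambda_e}\omega$ is needed. Second, the local expansion ``$\omega=r_e(\omega)\,du/u+(\text{analytic})$'' is only correct at horizontal nodes, where the differential has simple poles. At a vertical node the modification differential of \cite[(12.7)]{BCGGMmsds} also contains the terms $u^{\kappa_e-1}du$ and a term of the form $s_e^{\kappa_e \cdot(\text{something})}u^{-\kappa_e-1}du$, which are not of the claimed shape; it is the \emph{integral} over a crossing arc, not the one-form itself, that decomposes as $r_e\ln s_e$ plus a function analytic in $(s_e,\eta)$. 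Your proof writes the decomposition at the level of the form, which is not literally true, though the conclusion you draw from it is.

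Finally, you explicitly defer the hardest step — that after subtracting the logarithmic terms and dividing by $\scl[\topl(\gamma)]$, the result extends \emph{holomorphically} (not merely continuously) across the full boundary, including the deepest strata where several $t_i$ vanish simultaneously. This requires showing that the bracketed quantity vanishes to order $\scl[\topl(\gamma)]$ along each $\{t_k=0\}$ with $\topl(\gamma)\le k\le -1$, which involves the order-of-vanishing analysis of the plumbed differential level by level. You are right that this is the technical heart of the argument in \cite{fred}, but as it stands your proposal states the conclusion rather than proving it. To close the gap you would need to carry out the levelwise comparison: express $\omega\vert_{X_{(j)}}$ for $j\le\topl(\gamma)$ as $\scl[j]$ times a bounded analytic family of differentials, decompose the representing chain for $\gamma$ by level, and bound each contribution to the bracket by $\scl[j]$, so that division by $\scl[\topl(\gamma)]$ leaves a genuine holomorphic function whose restriction to $\{\underline{t}=0,\underline{h}=0\}$ is $\int_\gamma\eta$.
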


We will now rewrite the  defining equations of~$M$ at~$p$ in terms of log periods, and then express them in plumbing coordinates.
We briefly recall our setup for writing in linear equations.

 Let $F_1,\ldots, F_m$ be the rref basis of defining equations for $M$,  with respect to a fixed $\lG$-adapted basis.

To lighten the notation we focus on one of the equations $F:=F_k$ for now.

Then we can write
 \[
  F(X,\omega)=\sum_{i=-L(\lG)}^{\topl(F)}\left(\sum_{l=1}^{\lgcr}A_l^{(i)}\int_{\crC_l}\omega
 +\sum_{l=1}^{\lghf}B_l^{(i)}\int_{\hfC_l}\omega\right)
 \]
as in ~\eqref{eq:Fform}. Here we recall that $\crC$ are the  \hccs of level $i$ and $\hfC$ are the \nhccs  of level $i$ for a fixed $\lG$-adapted basis.

By \Cref{prop:eqsquantitative}(1) all coefficients $A_l^{(j)}$ in~\eqref{eq:Fform} are zero for $j<\topl(F)$. We thus denote $i:=\topl(F)$ from now on, and denote $A_l:=A_l^{(i)}$. In terms of log periods, we compute
\[
\begin{split}
F=&\sum_{l=1}^{c(i)} A_{l}\left(\scl[i] \logP[\delta_l^{(i)}]+\sum_{e\in E}\lrra{\delta_l^{(i)}, \lambda_e}\cdot\int_{\lambda_e}\omega \cdot\ln(s_e)\right) \\
&+\sum_{j=-L(\lG)}^{i}\sum_{l=1}^{d(j)}B^{(j)}_l \left(\scl[j] \logP[\alpha_l^{(j)}]+\sum_{e\in E}\lrra{\alpha_l^{(j)}, \lambda_e}\cdot\int_{\lambda_e}\omega \cdot\ln(s_e)\right)\\
=&
\sum_{l=1}^{c(i)} A_{l}\scl[i] \logP[\delta_l^{(i)}]+\sum_{j=-L(\lG)}^{i}\sum_{l=1}^{d(j)}B^{(j)}_l \scl[j] \logP[\alpha_l^{(j)}]+ \sum_{l=1}^{c(i)} A_{l}\cdot\int_{\lambda_l^{(i)}}\omega \cdot\ln(s_l^{(i)})
\end{split}
\]
where we used \Cref{prop:eqsquantitative} (3) to substitute $R_j(F)=0$ to obtain the cancellation of terms for the second equality. For future use, denote
\[
L(b):=\sum_{l=1}^{c(i)} A_{l}\scl[i] \logP[\delta_l^{(i)}](b)+\sum_{j=-L(\lG)}^{i}\sum_{l=1}^{d(j)}B^{(j)}_l \scl[j] \logP[\alpha_l^{(j)}](b).
\]
If $F$ is a \nhce then all coefficients $A_l$ are zero and we define
\begin{equation}\label{eq:G}
G(b):= \dfrac{1}{\scl[\topl(F)]}L(b)= \dfrac{1}{\scl[\topl(F)]}F(b)
\end{equation}
which is then a holomorphic function on~$U$.

If $F$ is a \hcc, then by~\Cref{thm:Mrltd}, the periods over vanishing cycles for all nodes in $\Ehor[F]$ are proportional. Since $F$ is an element of the rref basis, we have $A_1 =1$. Since the coefficients of proportionality determined explicitly in~\eqref{eq:constprop} are rational, it follows that there exist numbers $q_l\in{\mathbb Q}\setminus\lbrace 0\rbrace$ such that
\[
A_l\cdot\int_{\lambda_l^{(i)}} \omega= q_l \cdot\int_{\lambda_1^{(i)}}\omega.
\]
We can thus write
\[
F(b)=L(b)+  \left(\int_{\lambda_1^{(i)}}\omega \right) \cdot \left(\ln(s_1^{(i)})+\sum_{l=2}^{c(i)}  q_{l} \ln(s_l^{(i)})\right)
\]
By clearing denominators (this is where the rationality of~$q_l$ is used), the equation $F(b)=0$ is then equivalent to
\[
  \dfrac{n_1}{\int_{\lambda_1^{(i)}}\omega}L(b)+  \sum_{l=1}^{c(i)}  n_l\ln(s_l^{(i)}) = 0
\]
for some non-zero integers $n_l$, and without loss of generality we can assume $\gcd(n_l)=1$. By exponentiating, this is in turn equivalent to
\begin{equation}\label{eq:toric}
e^{f(b)}\prod_{l=1}^{c(i)} (s_l^{(i)})^{n_l}=1
\end{equation}
where we set
\[
f(b):= \dfrac{n_1}{\int_{\lambda_1^{(i)}}\omega}L.
\]
Since the point $p_0$ for which all $s_l^{(i)}=0$ is contained in $\pM$, we must have $f(0,0,0)=0$. Thus it cannot happen that all $n_i$ have the same sign. By separating terms with $n_l$ positive and negative, we can rewrite~\eqref{eq:toric} as
\begin{equation}\label{eq:H}
0=H(b):= e^{f(b)}s^{I}-s^{J}
\end{equation}
for some monomials $s^{I}$ and $s^{J}$ in the plumbing parameters $s_l^{(i)}$. We have now converted an element~$F$ of the rref basis for defining equations of~$M$ at~$p$ to plumbing coordinates and can now repeat the same process for all remaining equations in the rref basis. Before proceeding with the general setup, we give an example of how this works in practice.
\begin{exa}
Consider a boundary point $p_0$ such that the corresponding stable curve is irreducible --- and in particular the level graph has one vertex and some horizontal edges. Let $\lambda_1,\lambda_2$ be two horizontal vanishing cycles, and let $\delta_1,\delta_2$ be crossing curves for them, as shown in \Cref{fig:eqns}.
  \begin{figure}
\includegraphics[scale=0.75]{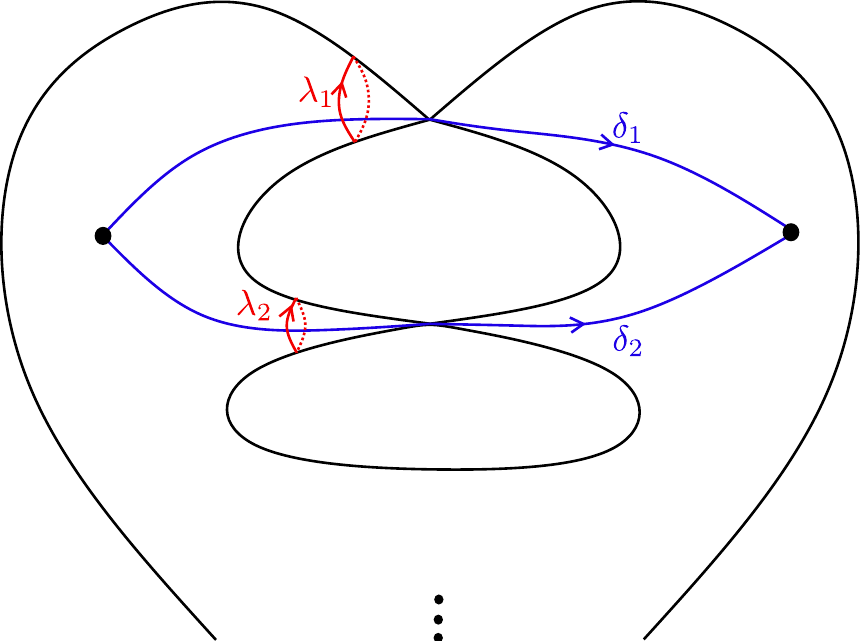}
\caption{A boundary point with two horizontal node vanishing cycles and two curves crossing these}
\label{fig:eqns}
\end{figure}
Suppose that $M$ is locally cut out by the two equations in period coordinates given in the table below, where we note that~\Cref{thm:Mrltd} applied to the first equation implies an equation relating the two periods in the second --- so that our machinery so far does not automatically produce any further equations. We will now demonstrate the procedure to convert these into equations in plumbing coordinates $b=(\eta,\underline{h})$ (since $\Gamma$ has only one level, there are no scaling parameters $\underline{t}$). The result is given in the table below
\renewcommand{\arraystretch}{1.7}
\begin{table}[h]
  \begin{tabular}{|l|l|} \hline
      {\bf Period equations} & {\bf Plumbing equations}  \\ \hline
$\int_{\delta_1}\omega - \int_{\delta_2}\omega = 0$ & $e^f s_1 - s_2 = 0$ \\ \hline
      $\int_{\lambda_1}\omega - \int_{\lambda_2} \omega = 0$  & $\int_{\lambda_1}\omega - \int_{\lambda_2} \omega = 0$    \\ \hline
    \end{tabular}
\end{table}

To convert the first period equation, we first express the first equation in terms of log periods:
\begin{equation*}
    \left(\logP[\delta_1] + \int_{\lambda_1} \omega \cdot \ln s_1\right) - \left( \logP[\delta_2] + \int_{\lambda_2} \omega \cdot \ln s_2\right) = 0.
\end{equation*}
Substituting here the second period equation yields
\begin{equation*}
    \logP[\delta_1]-\logP[\delta_2] + \left(\int_{\lambda_1}\omega\right)\left(\ln s_1 -\ln s_2\right) = 0.
\end{equation*}
Next we divide through and set $L= \logP[\delta_1]-\logP[\delta_2]$, which gives
\begin{equation*}
  \frac{L}{\int_{\lambda_1} \omega} + (\ln s_1 -\ln s_2)= 0,
\end{equation*}
Exponentiating and setting $f=\frac{L}{\int_{\lambda_1} \omega}$, we finally arrive at
\begin{equation*}
  e^f s_1 s_2^{-1} = 1,
\end{equation*}
so the first equation becomes $e^fs_1 - s_2 =0$ in plumbing coordinates, as claimed.

The second period equation extends holomorphically to the boundary. Indeed, it does not involve any \thc cycles, and thus no manipulations are necessary.
\end{exa}

\subsection{Rearranging equations.}
Going back to the general case, suppose that among the rref basis $F_1,\dots, F_m$, there are $u$ equations that are non-horizontal, and $u'=m-u$ that are \hces. We denote by $G_1,\dots,G_u$ the results of converting the \nhces to plumbing coordinates according to \eqref{eq:G}, and denote by $H_1,\dots,H_{u'}$ the results of converting the \hces to plumbing coordinates according to \eqref{eq:H}. These can be then written as
\begin{equation}\label{eq:convert}
\begin{split}
G_k(b)&=\dfrac{1}{\scl[\topl(F_k)]} L_k(b),\\
H_k(b)&= e^{f_k(b)}s^{I_k}-s^{J_k}
\end{split}
\end{equation}
for some monomials $s^{I_k},s^{J_k}$ in the variables $s_{l}^{(\topl(F_k))}$. Note that as $\underline{t}$ and $\underline{h}$ tend to zero, the equations $G_k$ tend to the defining linear equations for $\pMG$.

If we now define
\[
  V:= \{b\in U\,:\, G_1(b)=\dots=G_u(b)=H_1(b)=\dots=H_{u'}(b)=0\},
\]
then the defining linear equations of~$M$ can be rewritten analytically in plumbing coordinates as the equations defining~$V$ in plumbing coordinates.

We have thus proven
\begin{prop}\label{prop:locirred}
The local irreducible component~$\overline{Z}$ of $\oM$ at $p_0$ containing $p$ is a local irreducible component of $V$.
\end{prop}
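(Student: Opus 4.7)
The plan is to show that $\overline{Z}\subseteq V$ as analytic germs at $p_0$ and then use a dimension count at the smooth point $p$ to identify $\overline{Z}$ as a local irreducible component of $V$.

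First, I would verify that on the smooth locus $U^\circ$ the vanishing of the $G_k$ and $H_k$ is equivalent to the vanishing of the $F_k$. For non-horizontal $F_k$ this follows directly from $G_k=L_k/\scl[{\topl(F_k)}]$ together with the non-vanishing of $\scl$ on $U^\circ$ and the cancellation of the residual log-terms supplied by \Cref{prop:eqsquantitative}(3). For horizontal $F_k$, the derivation ending in \eqref{eq:H} is reversible on $U^\circ$ since $s^{I_k},s^{J_k}$ are non-zero there; note that \Cref{thm:Mrltd} provides the rationality of the proportionality constants $q_l$, which is what lets us exponentiate to a polynomial identity. Next I would note that the $G_k, H_k$ extend holomorphically to all of $U$: each ratio $\scl[j]/\scl[{\topl(F_k)}]$ with $j\le\topl(F_k)$ is polynomial in the $t_i$, and the period $\int_{\lambda_1^{(i)}}\omega$ dividing $f_k$ is bounded away from zero near $p_0$ by non-vanishing of the simple-pole residue at a horizontal node. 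Hence $V$ is a closed analytic subvariety of $U$, and the inclusion $Z\subseteq V\cap U^\circ$ combined with closedness of $V$ yields $\overline{Z}\subseteq V$.

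Being an irreducible analytic germ at $p_0$, $\overline{Z}$ lies inside some local irreducible component $V_0$ of $V$ at $p_0$, necessarily the unique one whose germ contains $p\in U^\circ$. For the reverse inclusion $V_0\subseteq\overline{Z}$, I would argue by dimension: the equivalence of equations at the smooth point $p\in U^\circ$ shows that $V$ coincides with the local branch of $M$ as germs at $p$, so $V$ is smooth at $p$ of dimension $\dim M$, giving $\dim V_0=\dim M$. Since $\overline{Z}$ is an irreducible local component of the algebraic variety $\oM$ (which has dimension $\dim M$), we also have $\dim\overline{Z}=\dim M$, and an irreducible analytic germ contained in another of equal dimension must equal it. I expect the main point to keep straight is the split of domains — the period equations $F_k$ live only on $U^\circ$ while $G_k, H_k$ are defined on all of $U$, and their equivalence is guaranteed only on $U^\circ$ — but this is harmless because $\overline{Z}$ is the $U$-closure of a subset of $U^\circ$, so the interior inclusion transports to the full closure once $V$ is known to be closed analytic.
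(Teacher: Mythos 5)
Your proposal follows essentially the same route the paper takes: the paper treats the proposition as immediate once the equations $G_k, H_k$ are obtained, and you fill in the intermediate steps (equivalence on $U^\circ$, holomorphic extension to $U$, closedness gives $\overline{Z}\subseteq V$, then a dimension/irreducibility count at the smooth point $p$). That structure is correct.

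One point deserves a fix. You justify holomorphic extension of the horizontal equations $H_k$ by claiming that the period $\int_{\lambda_1^{(i)}}\omega$ dividing $f_k$ is \emph{bounded away from zero near $p_0$} ``by non-vanishing of the simple-pole residue at a horizontal node.'' This is only true when the pivot horizontal node lies at level $i=\topl(F_k)=0$. For $i<0$ the residue of the plumbed differential $\omega$ at a level-$i$ horizontal node is the (non-zero) residue of the twisted differential \emph{rescaled} by $\scl[i]$, so $\int_{\lambda_1^{(i)}}\omega\to 0$ as $(X,\omega)\to p_0$. The analyticity of $f_k$ is nevertheless true, because $L_k$ also carries an overall factor of $\scl[i]$: every term in $L_k$ has a factor $\scl[j]$ with $j\le i$, hence is divisible by $\scl[i]$, and $\int_{\lambda_1^{(i)}}\omega/\scl[i]=\logP[\lambda_1^{(i)}]$ is analytic and bounded away from zero. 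So you should phrase the argument as a cancellation of the common factor $\scl[i]$ in numerator and denominator, not as a lower bound on the period itself. With that adjustment the argument is sound and matches the paper.
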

We have thus proven a big part of \Cref{thm:localequations}: we have converted the defining equations into plumbing coordinates, and have given their explicit form in~\eqref{eq:convert}.  The rest of the proof is a matter of organizing the equations.
\begin{proof}[Proof of \Cref{thm:localequations}]

We now rearrange the equations to reveal some of the underlying structure. Let $l(1),\dots,l(u')$ be the pivots of those equations $F_{j_1},\dots,F_{j_{u'}}$ that are \thc. After a change of coordinates
\[
x_{l}^{(i)}:=\begin{cases} e^{f_{j_k}(b)}s_{l(k)}^{(\topl(F_{j_k}))} & \text{ if } l=l(k), i=\topl(F_{j_k}),\\
s_l^{(i)} & \text{ otherwise},
\end{cases}
\]
the equations $H_k$ take the form
\begin{equation}
  \label{eq:binomial}
  H_k= x^{I_k}-x^{J_k}
\end{equation}
where $I_k,\, J_k$ are the monomials from \eqref{eq:convert}.

After this change of coordinates, we write the coordinates on $U$ as $(\underline{y},\underline{x})$ where $\underline{y}$ are all coordinates not involving horizontal nodes, and $\underline{x}$ is the set of plumbing parameters at the horizontal nodes that we just defined. Note that $\underline{y}$ can be separated further into the rescaling parameters $\underline{t}$ and  the periods $\int_{\alpha_l^{(j)}} \eta$. We furthermore separate $\underline{x}$ into sets of coordinates corresponding to individual \Mequiv classes, writing $\underline{x}=(\underline{x}_1,\ldots,\underline{x}_a)$. In these coordinates  the  local irreducible component $\overline{Z}$ of $\oM\cap U$ containing $p$ is an irreducible component of the product
\begin{equation}\label{eq:prod}
V= \{\underline{y}: G_1(\underline{y})=\dots =G_u(\underline{y})=0\} \times \mathop{\prod}_{l=1}^{a} \lbrace \underline{x}_l : \underline{H}_l(\underline{x}_l)=(0,\ldots,0)\rbrace
\end{equation}
where each $\underline{H}_l$ is the vector of all equations $H_k$ crossing nodes in the \Mequiv class~$\underline{x}_l$; this is possible since all nodes crossed by $H_k$ lie in the same \Mequiv class.

We now show that locally in the analytic topology $\overline{Z}$ is isomorphic to a product of~$\CC^n$ and varieties defined by binomial equations.
Since in the coordinates given by $(\underline{y},\underline{x})$ each equation $H_k$ is a difference of two monomials, it remains to show that the factor $ \{\underline{y}: G_1(\underline{y})=\dots =G_u(\underline{y})=0\} $ is smooth and thus locally isomorphic to $\CC^n$. This follows in particular from the proof of \Cref{cor:smooth}.

To finish the proof of the Theorem, we recall the relation between binomial equations and toric varieties. Recall that by definition a toric variety $X$ contains an algebraic torus $(\CC^*)^n$ as an open dense subset, so that the action of $(\CC^*)^n$ extends to $X$ (note that here we do not require $X$ to be a normal variety). By \cite[Lemma 1.1]{SturmfelsI} (see also \cite{SturmfelsII}) the zero locus of a binomial prime ideal in $\CC[x_1,\ldots,x_n]$ is an irreducible toric variety. The ideal generated by the equations for $V$ is generated by binomials but in general is not a prime ideal. Using the special form \eqref{eq:convert} of the equations for $V$ we will explicitly construct an embedding of $(\CC^*)^n$ in $Z$ and thus show that $\overline{Z}$ is locally a toric variety.

Since $V$, and hence also $Z$, is a product  in $(\underline{y},\underline{x})$-coordinates, it suffices to define the algebraic torus on each component cut out by $\underline{H}_l$. The crucial observation is that for each equation $H_l$ the pivot variable only appears in $H_l$, and in no other equations. Thus we define the $(\CC^*)^n$-action explicitly in coordinates $z_1,\ldots,z_c$, where $c$ is the number of non-pivots for $\underline{H}_l$, by sending $z_k$ to $x_k$ if $x_k$ corresponds to a non-pivot, and for a pivot variable we define $x_k$ as a function of $z_1,\ldots,z_c$ by solving the equation $H_k$ for $x_k$.
\end{proof}

\begin{figure}
\includegraphics[scale=0.6]{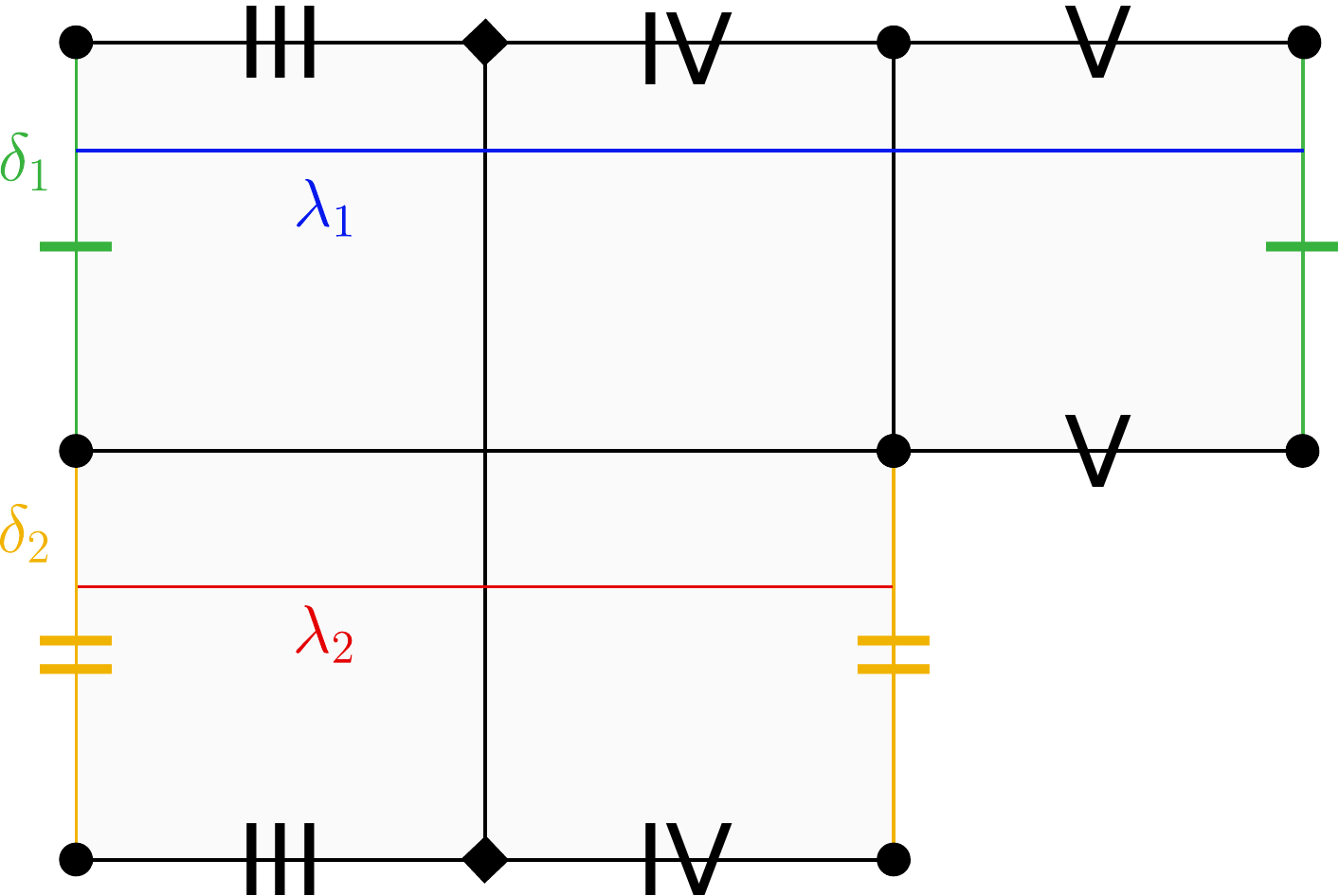}
\caption{The square tiled surface from \Cref{ex:squaretiled}}
\label{fig:squaretiled}
\end{figure}
\begin{exax}\label{ex:squaretiled}
The following example shows that the local irreducible component~$\overline{Z}$ of the linear subvariety may not be normal, already for Teichm\"uller curves. Indeed, every $2$-dimensional affine invariant submanifold contains {\em completely periodic surfaces} $(X,\omega)$, by which we mean that $(X,\omega)$ is a union of horizontal cylinders. Furthermore, for such $(X,\omega)$ the moduli of all horizontal cylinders on it are pairwise commensurable, by the Veech dichotomy (and also as easily follows from \Cref{thm:divisorial}). Furthermore, all core curves of horizontal cylinders are pairwise proportional on $M$ and  there exists a choice of cross curves of the horizontal cylinders such that all cross curves are pairwise proportional as well. For two horizontal cylinders~$C_1$ and~$C_2$ on~$X$ let~$e_1$ and~$e_2$ be the resulting horizontal nodes on the nodal curve obtained by applying $\begin{pmatrix} e^t&0\\0&e^{-t}\end{pmatrix}$ to the horizontal cylinders, while fixing the rest of the surface. Then we can convert a linear relation $\int_{\delta_1}\omega=c\int_{\delta_2}\omega$ among the periods of core curves into the analytic equation
\[
x^a=y^b
\]
where we denote $\dfrac{a}{b}:=\dfrac{m(C_1)}{m(C_2)}$, with $\gcd(a,b)=1$, the ratio of the moduli of the two cylinders. For example we consider the square-tiled surface from \Cref{fig:squaretiled} in the stratum $\Omega\calM_{2,1}(2)$ with a horizontal cylinder of modulus $3$ stacked on top of a horizontal cylinder of modulus $2$.
In this case the two linear relations defining $M$ are
\[
\int_{\delta_1}\omega=\int_{\delta_2}\omega;\quad 2\int_{\lambda_1}\omega=3\int_{\lambda_2}\omega,
\]
Converting them into analytic equations, we see that the local irreducible component $\overline{Z}$ is isomorphic to
\[
\CC\times \{(x,y)\,|\, x^2=y^3\}.
\]
which is a product of $\CC$ and a cusp, which is not normal, for example since the singular locus has codimension $1$.
\end{exax}

\begin{cor}\label{cor:smooth}
If all defining equations of $M$ are non-horizontal, then each local irreducible component $\overline{Z}$ of $\oM$ is smooth and transverse to the vertical boundary stratum given by $\{t_{-1}=\dots=t_{-L(\lG)}=0\}$.
\end{cor}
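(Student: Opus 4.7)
\textbf{Proof proposal for \Cref{cor:smooth}.} The strategy is to prove smoothness and transversality simultaneously by exhibiting an explicit full-rank $(m+L(\lG))\times(m+L(\lG))$ minor of the combined Jacobian of $G_1,\dots,G_m,t_{-1},\dots,t_{-L(\lG)}$ at $p_0$. Since by hypothesis no $F_k$ is \thc, the decomposition~\eqref{eq:prod} degenerates to $V=\{G_1=\dots=G_m=0\}$, and by~\Cref{prop:locirred} the local component $\overline Z$ is a component of $V$; a Jacobian of rank $m$ at $p_0$ will force $V$ to be smooth of codimension $m$ and to coincide with $\overline Z$ locally, while the appended rows and columns for the $t_j$'s will witness transversality.

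Recalling from~\eqref{eq:convert} the formula
$$
G_k(b)=\sum_{j=-L(\lG)}^{i_k}\sum_l B^{(j)}_{k,l}\,\frac{\scl[j]}{\scl[i_k]}\,\logP[\alpha_l^{(j)}](b),\qquad i_k:=\topl(F_k),
$$
I will compute partial derivatives at $p_0$ using three ingredients. First, the ratio $\scl[j]/\scl[i_k]$ is a monomial in $t_j,\dots,t_{i_k-1}$ that equals $1$ when $j=i_k$ and vanishes at $p_0$ whenever $j<i_k$. Second, by~\Cref{prop:logp}, $\logP[\alpha_l^{(j)}](b)=\int_{\alpha_l^{(j)}}\eta+H(b)$ with $H$ vanishing on $\DG$, so the period-direction partials of $\logP[\alpha_l^{(j)}]$ at $p_0$ are Kronecker deltas. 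Third, by~\Cref{prop:eqsquantitative}(1), the rref pivot of each $F_k$ is a non-horizontal cycle $\alpha_{l(k)}^{(i_k)}$ at the top level $i_k$, with $B^{(i_k)}_{k,l(k)}=1$ and $B^{(i_k)}_{k,l(k')}=0$ whenever $k\ne k'$ and $i_{k'}=i_k$. Combining these gives
$$
\frac{\partial G_k}{\partial\int_{\alpha_{l(k')}^{(i_{k'})}}\eta}\bigg|_{p_0}=\delta_{kk'},
$$
since the $j<i_k$ contributions die through the monomial ratio, the $j=i_k$, $k\ne k'$ contribution dies by rref, and $i_{k'}>i_k$ never occurs in the sum for $G_k$.

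This identity $m\times m$ minor establishes that $V$, and hence $\overline Z$, is smooth at $p_0$. For transversality to $D^\ver:=\{t_{-1}=\dots=t_{-L(\lG)}=0\}$, the key observation is that the pivot period coordinates on $\DG$ are independent of the $t_j$'s, so appending the rows and columns $dt_{-1},\dots,dt_{-L(\lG)}$ yields a block upper-triangular matrix
$$
\begin{pmatrix} I_m & * \\ 0 & I_{L(\lG)} \end{pmatrix},
$$
which is invertible. This is equivalent to $T_{p_0}\overline Z+T_{p_0}D^\ver=T_{p_0}U$, the transversality claim.

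The main care point is verifying the monomial vanishing of $\scl[j]/\scl[i_k]$ at $p_0$ for $j<i_k$: this is precisely what decouples distinct top levels, converts the rref structure of the $F_k$ into an identity minor for the differentials $dG_k$, and keeps the period partials clean of any $t_j$-contamination so that the final block-triangular structure is invertible.
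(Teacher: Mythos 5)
Your proof is correct and takes essentially the same approach as the paper's: compute the Jacobian of the $G_k$'s at $p_0$, use \Cref{prop:logp} together with the vanishing of the monomials $\scl[j]/\scl[i_k]$ for $j<\topl(F_k)$ to show the rref structure of the linear equations survives as an identity block at the pivot periods, and then conclude transversality from the independence of the $t_j$'s. The paper is more terse (``the Jacobian \ldots is in rref and has the same pivots as the original linear equations'') and phrases transversality as the $t_j$'s extending to a coordinate system on $V$, whereas you package both conclusions into a single block-triangular minor, but the underlying computation is identical.
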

\begin{proof}
Indeed, in this case of only non-horizontal equations we only have the first factor present in~\eqref{eq:prod}, and thus
\[
V=  \{\underline{y}:G_1(\underline{y})=\dots =G_u(\underline{y})=0\}
\]
For each equation $G_k$ the pivot corresponds to a period $\int_{\alpha_{l(k)}^{(\topl(G_k))}} \eta$, since $G_k$ is non-horizontal.
By \Cref{prop:logp}, the Jacobian of the set of equations $G_1,\dots,G_u$ with respect to coordinates $\underline{y}$ is in \rref and has the same pivots as the original linear equations $F_{j_1},\dots,F_{j_u}$ corresponding to non-horizontal equations.
In particular $V$ is smooth and irreducible, and since it contains $\overline{Z}$, it must coincide with $\overline{Z}$. Furthermore, the normal space to $V$ is generated by $\int_{\alpha_{l(k)}^{(\topl(G_k))}} \eta$ for $k=1,\ldots,u$  and thus we can choose $t_{-1},\dots,t_{-l(\lG)}$ as part of a local coordinate system on $V$, which shows that $Z$ is transverse to $\{t_{-1}=\dots=t_{-L(\lG)}=0\}$.
\end{proof}

The condition of this Corollary is satisfied for example if $\oM$ is disjoint from the closed boundary divisor of $\LMS$ that corresponds to graphs that have a horizontal edge. In \Cref{sec:HW} we will apply this Corollary to obtain a compactification of Hurwitz spaces. We are now ready to prove our result about smoothing a collection of nodes of $\lG$.

\begin{proof}[Proof of~\Cref{thm:transverse}]
First note that the variety defined by equations $G_1(\underline{y})=\dots =G_u(\underline{y})=0$ is smooth and irreducible; we denote it $Y$.
As the local irreducible component $\overline{Z}$ of $M$ is an irreducible component of $V$, which is a direct product, it follows that $\overline{Z}$ is a product of irreducible components of the factors, and we write is as $\overline{Z}=Y\times\prod_{l=1}^a X_l$, $X_l$ denote the individual factors, which are given by equations in variables $\underline{x}_l$. As in the proof of~\Cref{cor:smooth} above, we know that there is a local coordinate system on $Y$ including $(t_{-1},\dots,t_{-L(\lG)})$. Thus for any sufficiently small collection of $t_i$, there exists a point in $Y$ with these $\underline{t}$-coordinates, which is to say that any collection of level passages in $\Gamma$ can be smoothed, while remaining in $\oM$.

To show that any \Mequiv class of horizontal nodes can be smoothed while remaining in~$\oM$, we simply observe that since $\overline{Z}$ is a product, and contains the flat surface $p\in\overline{Z}\cap M$, it means that the coordinates $\underline{x}_l(p)$ of this point are all non-zero, while $\underline{x}_l(p)\in X_l$. But then the point with all the same $\underline{y}$ and $\underline{x}$ coordinates as $p_0$, except with  coordinates $\underline{x}_l(p)$, lies in the product $\overline{Z}=Y\times\prod_{l=1}^a X_l$, which is exactly to say the $l$-th \Mequiv class of nodes has been smoothed.
\end{proof}

\subsection{Application: a smooth compactification of Hurwitz spaces}\label{sec:HW}
As an application of our study of the local analytic equations of linear subvarieties, we construct a smooth compactification of Hurwitz spaces.

Recall that Hurwitz spaces are moduli spaces of rational functions on Riemann surfaces with prescribed ramification multiplicities. By associating to a rational function $f:X\to\PP^1$ its exact differential $df$ we can consider Hurwitz spaces as subvarieties of meromorphic strata. Being an exact differential is characterized by the vanishing of all absolute periods, which are linear conditions in period coordinates. We can thus realize Hurwitz spaces as linear subvarieties of strata. The Hurwitz spaces we consider here are a ``rigidified" version of the standard Hurwitz spaces where we mark all points lying over a branch point. If we only mark the points over two fibers, for example the fiber over $0$ and $\infty$, then we arrive at the definition of double ramification cycles instead. In \cite{fredDR} the first author will use a similar approach to describe the closure of double ramification loci inside $\calM_{g,n}$.

We now briefly define the Hurwitz spaces that we consider. Let $f:X\to\PP^1$ be a degree $d$ map, which we think of as a rational function, branched over $x_1,\dots,x_n\in\PP^1$, with local ramification indices $\left(e_{1}^{(i)},\dots,e^{(i)}_{k_i}\right)$ over $x_i$. We call the tuple $\underline{d}=\left(d;\left(e^{(1)}_{1},\dots,e^{(1)}_{i_1}\right),\dots, \left(e^{(n)}_{1},\dots,e^{(n)}_{i_n}\right)\right)$ the {\em branching profile} of $f$. For a fixed branching profile $\underline{d}$, we define the {\em Hurwitz space}
\begin{equation}\label{eq:Hurwdefine}
\Hur(\underline{d}):=\left\{
(X,\underline{z},f:X\to \PP^1)\,:\, \begin{array}{l} f \text{ has  branching profile } \underline{d},\\\mult_{z_k^{(i)}} f=e_k^{(i)},\,\\ f(z_k^{(i)})=f(z_{k'}^{(i)})\,\forall k,k',\,\\
\end{array}
\right \}/\sim
\end{equation}
where $\underline{z}=\left(z_1^{(1)},\dots,z_{i_1}^{(1)},\dots, z^{(n)}_{1},\dots,z^{(n)}_{i_n}\right)\subset X$ is a collection of distinct labeled points, and $\mult_{z} f$ denotes the ramification index of $f$ at $z$. Two such covers $(X,\underline{z},f),\, (X',\underline{z'},f')$ are considered equivalent if there exists an isomorphism $\phi:(X,\underline{z})\to (X',\underline{z'})$ of pointed Riemann surfaces, and an isomorphism $\psi:\PP^1\to\PP^1$ such that the diagram
\[
\begin{tikzcd}
(X,\underline{z})\ar[r,"\phi"] \ar[d,"f"]& (X',\underline{z}') \ar[d,"f'"]\\
\PP^1\ar[r,"\psi"] & \PP^1
\end{tikzcd}
\]
commutes. Note that for every $(X,\underline{z},f)$, after composition with an automorphism of~$\PP^1$, we can assume that $f^{-1}(\infty)=\left\{z_1^{(n)},\dots,z_{i_n}^{(n)}\right \}$. After this normalization we can still translate and rescale $f$. Since $df$ is unchanged when~$f$ is translated, a rational function up to automorphisms is the same as an exact differential up to rescaling.

Given a branching profile $\underline{d}$, we define a partition
\[
\mu=\left(\mu_1^{1},\dots,\mu^{(1)}_{i_1},\dots, \mu^{(n)}_1,\dots,\mu_{i_n}^{(n)}\right )
\]
of $2g-2$ by setting $\mu_k^{(i)}:=\ord_{z_k^{(i)}} df$, where we normalize as above, so that $f$ is assumed to have poles exactly at $z_1^{(n)},\dots,z_{i_n}^{(n)}$. Thus thinking of the triple $(X,\underline{z},df)$ instead of $(X,\underline{z},f)$ gives a map of the Hurwitz space to the projectivized stratum $\PP\Omega\calM_{g,n}(\mu)$, and we thus see that $\Hur(\underline{d})$ is isomorphic to the linear subvariety
\[
\PP\Omega\!\Hur(\underline{d}):=\left\{(X,\underline{z},\omega)\in \PP\calM_{g,n}(\mu)\,:\,\begin{array}{l} \int_{\gamma} \omega=0\,\text{   \ \ \  }\forall\gamma\in H_1(X;\ZZ),\,\\ \int_{p_k^{(i)}}^{p_{k'}^{(i)}} \omega =0\, \text{   }\forall k,k', i\neq n
\end{array} \right\}
\]
We can thus compactify $\Hur(\underline{d})$ by taking the closure of $\PP\Omega\!\Hur(\underline{d})$ inside $\PP\LMS$.
\begin{prop}\label{prop:HW}
The closure $\overline{\PP\Omega\!\Hur(\underline{d})}\subseteq\PP\LMS$ is a {\em smooth} compactification of $\PP\Omega\!\Hur(\underline{d})$.
\end{prop}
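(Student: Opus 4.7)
The plan is to use the exactness of $df$ to show that $\oM$, for $M := \PP\Omega\!\Hur(\underline{d})$, cannot meet any boundary stratum whose enhanced level graph has a horizontal edge, and then invoke \Cref{cor:smooth} to conclude smoothness.

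First, observe that the defining equations for $M$ at any point $(X,\underline{z},\omega)$ include $\int_\gamma \omega = 0$ for every absolute cycle $\gamma \in H_1(X;\ZZ)$ --- this is precisely the condition that $\omega$ be exact. In particular, for any node $e$, the vanishing cycle $\lambda_e$ is an absolute homology class, so $\int_{\lambda_e}\omega$ vanishes identically on $M$, and hence by continuity on $\oM$. Now suppose for contradiction that some $p_0 \in \pM$ lies in an open boundary stratum $\DG$ for which $\Ehor[\lG]$ is non-empty, and pick any horizontal edge $e \in \Ehor[\lG]$. By the definition of a multi-scale differential, the twisted differential $\eta_0$ has a simple pole with a non-zero residue $r_e$ at the horizontal node~$e$. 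Since $\int_{\lambda_e}\omega$ extends continuously across the horizontal boundary of $\LMS$ to $2\pi i \, r_e \neq 0$ at $p_0$ (this is the standard fact underlying the residue calculation used in the proof of \Cref{thm:divisorial}), the period $\int_{\lambda_e}\omega$ does not vanish in a neighborhood of $p_0$, contradicting the identical vanishing on $\oM$.

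Therefore every boundary point $p_0 \in \pM$ lies in a stratum $\DG$ with $\Ehor[\lG] = \emptyset$. At such a point, vacuously none of the defining equations of $M$ crosses a horizontal node (there are none to cross), so \Cref{cor:smooth} applies: the local model $V = \{\underline{y} : G_1 = \dots = G_u = 0\}$ is smooth and irreducible, and its unique local irreducible component coincides with $\oM$ near $p_0$, and is transverse to the vertical boundary. Combined with smoothness of $M$ itself (cut out locally by linear equations in period coordinates on the smooth variety $\omodulin(\mu)$), this shows that $\oM \subseteq \LMS$ is smooth everywhere.

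Finally, $\oM$ is $\CC^*$-invariant since all defining equations are linear and homogeneous, and the $\CC^*$-action on $\LMS$ descends to the smooth orbifold $\PP\LMS$; hence $\overline{\PP\Omega\!\Hur(\underline{d})} = \oM / \CC^*$ is smooth in $\PP\LMS$. Compactness is automatic from compactness of $\PP\LMS$. The main subtle point in this argument is the non-vanishing of $\int_{\lambda_e}\omega$ at a horizontal-node boundary point, which ultimately relies on the definition of the multi-scale compactification forcing simple poles (with non-zero residues) at horizontal nodes --- without this, one could not rule out boundary points with horizontal nodes and the smoothness could fail.
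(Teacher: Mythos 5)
Your argument correctly rules out horizontal nodes on the boundary and correctly applies \Cref{cor:smooth} to get smoothness of each local irreducible component of the closure, and this first half matches the paper. But there is a genuine gap: \Cref{cor:smooth} only says that \emph{each} local irreducible component $\overline{Z}$ of $\oM$ at $p_0$ is smooth. It does not say there is a unique such component. The local model $V=\{\underline{y}\colon G_1=\dots=G_u=0\}$ depends on a choice of smooth point $p\in Z$ near $p_0$: different local branches $Z_1,Z_2$ of $M$ (accessed from different smooth points $p_1,p_2$) would in principle yield different equations and hence different local models $V_1,V_2$, each smooth, whose union is not smooth at $p_0$. When you write ``its unique local irreducible component coincides with $\oM$ near $p_0$,'' you are asserting local irreducibility of $\oM$ without proving it, and this is exactly the step where smoothness could fail for a general linear subvariety cut out by non-horizontal equations.

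The paper's proof devotes its second half to exactly this point: it assumes two local components $\overline{Z}_1,\overline{Z}_2$ and smooth points $p_i\in Z_i$, parallel-transports a $\lG$-adapted basis from $p_1$ to $p_2$ along a path $\gamma$, observes that the transported basis is well-defined up to adding multiples of vanishing cycles (which are absolute classes), and then uses that the defining equations of the Hurwitz locus are ``all absolute periods vanish'' plus ``certain relative periods vanish'' --- a description that is invariant under Gauss--Manin transport precisely because all absolute periods are forced to be zero. So $N$ and $\GM(N)$ produce the same analytic equations via~\eqref{eq:convert}, the two local components coincide, and smoothness follows. You would need to supply some version of this monodromy/Gauss--Manin argument (or another proof of local irreducibility) to complete the proof.
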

\begin{proof}
For any boundary point $p_0\in\partial{\PP\Omega\!\Hur(\underline{d})}$ we first claim that $\Gamma$ has no horizontal nodes. Indeed, at a horizontal node, the residue of a twisted differential is non-zero, but since the twisted differential is the rescaled limit of exact differentials $df$, all of whose absolute periods are zero on all flat surfaces in $\PP\Omega\!\Hur(\underline{d})$, this is impossible.

Thus by~\Cref{cor:smooth} every local irreducible component of $\overline{\PP\Omega\!\Hur(\underline{d})}$ at $p_0$ is smooth. Note that \Cref{cor:smooth} has been stated only for unprojectivized strata but the proof applies also for projectivized strata, as the extra local factor of $\CC^*$ does not make any difference.

It remains to show that $\overline{\PP\Omega\!\Hur(\underline{d})}$ is locally irreducible at $p_0$. Assume that $\overline{Z}_1$ and $\overline{Z}_2$ are two  local irreducible components  of $\overline{\PP\Omega\!\Hur(\underline{d})}$ near $p_0$, and choose smooth points $p_i=(X_i,\omega_i)\in Z_i$. Given a $\lG$-adapted basis at $p_1$ and a path $\gamma$ from $p_1$ to $p_2$, we can transport it to a $\lG$-adapted basis at $p_2$ using the Gauss-Manin connection along $\gamma$. The resulting homology basis at $p_2$ depends on $\gamma$, but the bases at $p_2$ obtained by translating along different paths will only differ by adding multiples of the vanishing cycles. Since every vanishing cycle is  contained in absolute homology, the resulting analytic equations for $\overline{Z_2}$ will be independent of the choice of~$\gamma$. Let $N$ be the defining linear equations for $\overline{Z}_1$ at $p_1$ in the chosen $\lG$-adapted basis, and let $\GM(N)$ be the result of transporting them along $\gamma$ using the Gauss-Manin connection. Then $\GM(N)$ are defining equations for $\overline{Z}_2$ at $p_2$, simply because they are again the equations of vanishing of all absolute periods, and the vanishing of relative periods (which is a condition that is independent of the choice of the path, as all absolute periods are zero).
Using \eqref{eq:convert} we can convert~$N$ and~$\GM(N)$ into the analytic equations of $\overline{Z}_1$ and $\overline{Z}_2$ near $p_0$ in plumbing coordinates. Since $N$ and $\GM(N)$ induce the same analytic equations near $p_0$,
 Thus at every boundary point there can be only one local irreducible component, and it finally follows that the closure is smooth.
\end{proof}

\section{Cylinder deformation theorem}\label{sec:cyldef}
In this section we use the restrictions on linear equations that we have obtained (in particular, \Cref{thm:decomp} and \Cref{thm:Mrltd}) to give a new proof of the Cylinder Deformation Theorem.  The key point is that we can decompose the defining equations in such a way that all the cylinders crossed by any particular equation are $M$-parallel.  It then follows that~$M$ admits some deformation changing just the cylinders in an $M$-parallel equivalence class, and in fact we show that stretching/shearing all these cylinders by the same matrix remains in~$M$.

Below, we will need to consider a \emph{cross-curve} $\delta$ of a cylinder $C$ on a flat surface $p_0$,  in the sense of Wright~\cite{wright}.  This is defined to be a curve represented by a saddle connection that lies in the cylinder, crosses the cylinder, and has one endpoint at a zero on the bottom boundary of the cylinder, and the other endpoint at a zero on the top boundary  (note that a cross-curve can cease to be a cross-curve under a small perturbation, for instance if the cylinder contains multiple zeroes on each of its boundary components).

We start with a Lemma that gives a connection between horizontal nodes and Euclidean cylinders for the flat metric of large modulus.
\begin{lm}  \label{lm:cyl-node}
For any $p_0\in\DG\subset\partial\LMS$, there exists a sufficiently small neighborhood $U\ni p_0$ and a sufficiently large $R>0$,  such that for any flat surface $p=(X,\omega)\in U\cap \omodulin(\mu)$, and for any flat Euclidean cylinder $C\subset X$ of modulus greater than $R$, the circumference curve~$\lambda$ of~$C$ is a horizontal vanishing cycle.
\end{lm}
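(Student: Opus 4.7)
The plan is to exploit the explicit plumbing description of surfaces near $p_0$ from~\cite{BCGGMmsds}. Any $p=(X,\omega)\in U\cap\omodulin(\mu)$ is obtained by plumbing a twisted differential $(X_b,\eta_b)\in W\subset\DG$, and topologically $X$ decomposes into a \emph{thick part} (a small neighborhood of a compact subset of the components $X_v$, cut off away from the nodes) and \emph{plumbing fixtures} at each node $e\in E(\lG)$. I would analyze flat-geometric cylinders contributed by each of these three types of regions separately.

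First, I would verify that the plumbing fixture at a \emph{horizontal} node $e$ is a genuine flat Euclidean cylinder $A_e\subset X$, whose core is the vanishing cycle $\van$ and whose modulus behaves like $\tfrac{1}{2\pi|r_e|}\log(1/|h_e|)$, diverging as $h_e\to 0$. This is direct from the standard plumbing construction at a simple-pole/simple-pole node with matching residues.

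Next, I would show that after possibly shrinking $U$ there is a uniform bound $R_0$ on the moduli of all flat cylinders contained in the thick part or in any vertical plumbing fixture. For the thick part, the flat metric on a fixed compact subset of $X_v$ is $C^0$-close to that of the rescaled limit differential $\eta_v$; as $p_b$ varies in the relatively compact $W$, the set of cylinders on such components has uniformly bounded moduli. For a vertical fixture at an edge $e\in\Ever$ with enhancement $\kappa_e\ge 1$, the top side carries a zero of order $\kappa_e-1$ (a flat cone point), while the bottom side carries a pole of order $\kappa_e+1\ge 2$ whose neighborhood, after the multi-scale rescaling by the level parameter $t_{\lbot[e]}$, has area $O(|t_{\lbot[e]}|^{2})$; in either situation the fixture does not contain any flat cylinder of unbounded modulus as $p\to p_0$. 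Quantitatively, the area–modulus inequality $\mathrm{area}(C)\ge\mathrm{mod}(C)\cdot\ell(C)^2$, combined with a uniform lower bound on the length of any essential simple closed curve that stays in the fixture, gives the required bound.

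Finally, choose $R>2R_0$. Given a cylinder $C\subset X$ with $\mathrm{mod}(C)>R$, I claim its core is homotopic to $\van$ for some $e\in\Ehor$. Indeed, if the core of $C$ were isotopic into the complement of the horizontal plumbing fixtures $\bigsqcup_{e\in\Ehor}A_e$, it would sit entirely in the thick part together with the vertical fixtures, contradicting the bound $R_0$. Otherwise, the core of $C$ crosses some $A_e$; by the standard extremal-length comparison (two disjoint simple closed curves whose cores cross transversally cannot both realize large modulus), the only way for $C$ to have modulus exceeding $R$ is that its core is isotopic to that of $A_e$, namely to $\van$.

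The main obstacle is making the vertical-fixture bound rigorous: a vertical node has non-standard local flat geometry (a cone on top, a higher-order pole on the bottom), and one must check uniformly over $U$ that no long flat cylinder can be embedded there. The cleanest route I see is the area/length estimate above, applied separately to the top and bottom halves of the vertical fixture after rescaling by the appropriate $t_i$, using that an essential closed curve in the fixture is either the vanishing cycle (whose flat length enters the estimate through the plumbing parameter) or a curve bounded below in length by the diameter of the local model.
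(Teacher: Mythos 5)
Your overall strategy (identify long Euclidean cylinders with the horizontal plumbing fixtures, and bound moduli everywhere else) is genuinely different from the paper's. The paper first works entirely with the \emph{hyperbolic} metric and the Schwarz lemma to show that the core of any essential annulus of sufficiently large modulus must be in the isotopy class of some vanishing cycle; only then does it switch to flat geometry to rule out the vertical vanishing cycles. You try to carry out both steps directly in the flat metric. That's a reasonable idea, but there are two genuine gaps in the way you execute the vertical case.

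First, your bound $R_0$ is established only for cylinders \emph{contained in one region at a time} (the thick part, or a single vertical fixture). The final step needs a modulus bound for cylinders whose core is isotopic into the \emph{union} of the thick part and all vertical fixtures, which is a strictly stronger statement: a cylinder may span a vertical fixture and touch the thick parts at both levels. No argument is given to extend the bound to such spanning cylinders, and the area--modulus inequality you invoke does not cover them since you only estimate the area of the fixture, not of the whole region the cylinder occupies.

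Second, and more fundamentally, the cylinder you need to rule out is never contained in a vertical fixture. By the plumbing construction (\cite[Sec.~12]{BCGGMmsds}) the differential $\Omega_e$ on a vertical fixture has no zeroes or poles there, so a flat cylinder $C$ whose core is isotopic to $\lambda_e$ with $e\in\Ever$ must have its boundary saddle connections, and hence a cross-curve, ending at zeroes that lie outside the fixture; indeed the cross-curve necessarily enters the thick parts of both $X_{(\ltop)}$ and $X_{(\lbot)}$. So estimating the modulus of annuli inside the fixture simply does not address the object in question. The paper's actual mechanism is a scale comparison: choose a closed geodesic of $C$ passing through a fixed point in the thick part of $X_{(\ltop)}$; its flat length, which equals $|\int_{\lambda_e}\omega|$, is then bounded below by $c\,|\scl[\ltop]|$. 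On the other hand, $\lambda_e$ is homotopic into the thick part of $X_{(\lbot)}$, so $|\int_{\lambda_e}\omega|\le c'\,|\scl[\lbot]|$. These are incompatible because $|\scl[\ltop]|\gg|\scl[\lbot]|$ on a sufficiently small $U$. Your write-up never compares scales at the two ends of a vertical edge, and without that comparison the vertical case does not close.

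If you want to keep your all-flat route, the fix is to replace the area estimate on vertical fixtures by exactly this two-scale comparison for the circumference of any flat cylinder crossing a vertical fixture, and to handle spanning cylinders explicitly rather than regionwise.
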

\begin{proof}
We first show that the core curve of every essential annulus of sufficiently large modulus must be homotopic to a vanishing cycle. While this is an easy standard argument, we have not been able to pinpoint a precise reference in the literature. For this, we forget the flat structure, and work in a neighborhood $U$ of a nodal curve $X_0\in\partial\overline{\calM}_{g,n}$, where every smooth curve has a thick-thin decomposition, where we think of $U$ in terms of standard plumbing coordinates near the boundary of $\overline{\calM}_{g,n}$. Let $\lambda$ be the core curve of an essential annulus on~$X$ of sufficiently large modulus $R$. Then by the Schwarz lemma the homotopy class of $\lambda$ contains a short closed geodesic $\lambda'$ for the hyperbolic  metric on $X$ (where short means of length going to zero, as $R\to\infty$). We claim that $\lambda'$ cannot intersect the thick part of~$X$.

To this end, observe that the hyperbolic length of all closed geodesics on the thick part of all $X\in U$ is bounded below by a non-zero constant, and thus by increasing $R$ if necessary, we can ensure that it cannot happen that $\lambda'$ is contained in the thick part. If $\lambda'$ intersects both the thin and thick parts of~$X$, consider a ``shortened'' plumbing annulus, where collars of hyperbolic width 1 are fixed at both ends. Then by using this smaller plumbing neighborhood to start with, we can ensure that $\lambda'$ must intersect both the thin part in the shortened plumbing annulus, and the thick part for the original longer plumbing annulus. In particular, $\lambda'$ must cross from one boundary of the collar to the other, but then the hyperbolic length of $\lambda'$ must be at least 1, so $\lambda'$ cannot be short. Thus finally $\lambda'$ must be contained in the thin part, but then it must be contained in one plumbing annulus, and finally it must be homotopic to the corresponding vanishing cycle.

We now switch from this general discussion to the situation of essential annuli that are Euclidean cylinders for the flat metric. Choose a neighborhood~$U$ of $p_0\in \LMS$ sufficiently small so that every $p=(X,\omega)\in U$ is obtained by plumbing some $p'\in\DG$ under the plumbing construction of~\cite[Sec.~12]{BCGGMmsds}, and so that the above argument applies, for some chosen large~$R$. We claim that (possibly after further increasing~$R$ and shrinking $U$) the core curve of any Euclidean cylinder~$C$ is homotopic to a horizontal vanishing cycle.

Suppose for contradiction that the core curve of~$C$ is homotopic to some vertical vanishing cycle~$\van$, for $e\in\Ever$. Recall that the plumbing construction for flat differentials glues in a plumbing annulus ${\mathbb V}_e$ around a vertical node, such that $\omega$ on it has the standard form $\Omega_e$ given by~\cite[(12.7)]{BCGGMmsds}. In particular $\omega$ has no zeroes or poles on ${\mathbb V}_e$. The cross-curve $\delta$ of $C$ connects two zeroes of $\omega$ and thus must cross into the thick part of both $X_{(\lbot)}$ and $X_{(\ltop)}$.
In particular we can choose a geodesic $\lambda'$ for the flat metric on~$C$ that is in the isotopy class of $\van$ and passes through some fixed point $x$ in the thick part of $X_{\ltop}$.
Let $D$ be a small disk of fixed radius around $x$, contained in the thick part of~$X$. Then the length $|\int_\lambda\omega|=\int_{\lambda'}|\omega|$ of $\lambda'$ in the flat metric is bounded below by
\[
\int_{\lambda'\cap D}|\omega|= c\cdot |\scl[\ltop]|,
\] where $\scl[\ltop]$ is the scaling parameter for $\omega$ on $X_{(\ltop)}$ and $c$ is a constant independent of $\omega$, which depends on the size of $D$ and the choice of the thick-thin decompositions. Note that $c$ depends on which zeros are connected by the cross-curve $\delta$, but since there are only finitely many zeros we can choose $c$ to be the minimum.

On the other hand, $\lambda_e$ is homotopic to a path contained in the thick part of $X_{(\lbot)}$ and thus the length of $\lambda_e$ can be bounded above by $c'\cdot |\scl[\lbot]|$ for some constant $c'$ independent of $\omega$. This contradicts the fact that $\left| \scl[\ltop] \right| \gg \left| \scl[\lbot] \right|$ on $U$ (after possibly further shrinking $U$).
\end{proof}

We are now ready to prove our generalization of the cylinder deformation theorem.
\begin{proof}[Proof of~\Cref{thm:cyldeformation}]
We begin by using the $\GL^+(2,\RR)$-action to get to a boundary point of~$M$ where we can apply our results restricting the defining equations of~$M$. Recall that elements $a_t,u_s\in\GL^+(2,\RR)$ can be applied to any flat surface, and that $a_t$ and $u_s$ preserve~$M$, since~$M$ is given by linear equations with real coefficients. Recall that transformations $a_t^\calC$ and $u_s^\calC$ only act on cylinders in the class $\calC$, leaving the rest of the flat surface unchanged, and our goal is to show that they also preserve~$M$.

For any given $(X,\omega)$ the forward orbit $\{a_t (X,\omega)\}_{t\ge 0}$ is contained in~$M$. Since all cylinders in $\calC$ are stretched unboundedly by $a_t$ as $t\to+\infty$, the underlying Riemann surfaces in this orbit degenerate as $t\to +\infty$.  Thus the image of this forward orbit in the projectivization $\PP\omodulin(\mu)$ cannot be compact, and there must exist a boundary point $\PP p_0=\PP(X_0,\lG,\eta_0)\in \partial\PP\LMS$
and a sequence $\{ t'_n\}$ of positive numbers such that $t'_n\to+\infty$ and $ \PP a_{t'_n} (X,\omega)\to \PP p_0$. Here by $\PP p_0$ we mean the image in $\PP\LMS$ of a point $p_0\in\LMS$, as throughout the paper, under the quotient map $\LMS\to\PP\LMS$.  This implies that there exist complex numbers $r_n$ such that $r_n a_{t'_n}(X,\omega) \to p_0$.  By taking a subsequence, we can assume that the angles of the $r_n$ converge to some $\alpha\in S^1$.  We now rotate each $r_n$ so that it is positive and real, and we replace $p_0$ with $-\alpha p_0$.   In the end we get a sequence $r_n$ of positive reals so that $r_n a_{t'_n}(X,\omega) \to p_0$.

By throwing away some of the beginning terms of the sequence, we can assume that all $r_n a_{t'_n}(X,\omega)$ lie in $U$ (recall that $U$ is a small neighborhood of the boundary point $p_0$).  Denote $(Y,\omega_Y):=a_{t'_1} (X,\omega)$.  It suffices to prove the statement for $(Y,\omega_Y)$ instead of $(X,\omega)$.

We now subdivide $U$ into a finite number of simply connected sets (as in ~\cite[Section 3.1]{ben} or \cite [Section 8]{chwr}).  By passing to a subsequence, we can assume that $r_n a_{t'_n}X$ all lie in one of these sets $W$.  Let $\{t_n:=t'_n-t'_1\}$ be the sequence such that $a_{t_n}Y=a_{t'_n}X$.
Below we will not need to think of the $1$-form separately from the Riemann surface, so we will drop $\omega_Y$ from the notation; we will denote $\beta(Y):=\int_\beta \omega_Y$ the period over a relative homology class~$\beta$.

By \Cref{lm:cyl-node}, the circumference curve of each horizontal cylinder $C_i$ on $a_{t_n}Y$ must be the vanishing cycle for some horizontal node of $p_0$ (note that we are abusing notation by thinking of $C_i$, initially defined to be a cylinder on $X$, as a cylinder on $a_{t_n}Y$; this creates no issues since all of these surfaces are in the $a_t$ orbit of $X$).  By passing to a further subsequence of $t_n$, we can assume that the horizontal node $e_i^{(n)}\in\Ehor$ whose vanishing cycle is the circumference curve of $C_i$ on $a_{t_n}Y$ does not in fact depend on $n$.

By~\Cref{thm:decomp}, any defining equation~$F$ of~$M$ can be decomposed as $F=H_1+\cdots+H_k +G$, where each $H_j$ crosses a primitive collection of horizontal nodes, all at level $\topl(H_j)$, and $G$ does not cross any horizontal nodes.  To show that $a_t^\calC u_s^\calC Y\in M$, it is thus enough to show that any such defining equations $H_1,\ldots, H_k, G$ vanish also at the point $a_t^\calC u_s^\calC Y$.

We will express this deformation in terms of periods of cross-curves (see the discussion at the beginning of this section for the definition).  Let $\delta_i$ be a cross-curve of the cylinder $C_i$ on the surface $Y$.  Since $C_i$ is horizontal, its height $h_i(Y)$ on the surface $Y$ equals $\Im\, \delta_i(Y)$.   Note that $\delta_i$ can be thought of as a relative homology class on all surfaces in the simply connected set $W$.  For sufficiently small $t,s$, the deformation $a_t^\calC u_s^\calC$ changes periods by
\begin{equation}\label{eq:def}
  \delta_i(Y) \mapsto a_t u_s  (\delta_i(Y)),
\end{equation}
for any~$i$, while preserving the period over any curve that does not cross any cylinder $C_i$. Since the class $G$ does not cross any horizontal nodes, and in particular does not cross the circumference curve of any $C_i$, it follows that $G(a_t^\calC u_s^\calC Y)=G(Y)=0$ for any sufficiently small $t,s$.

For a defining equation $H_j$, first note that if it does not cross any of the cylinders in $\calC$, then it is similarly preserved under the deformation~$a_t^\calC u_s^\calC$. Suppose now that $H_j$ crosses some $C_i\in\calC$. Since the collection of horizontal nodes that $H_j$ crosses is primitive, all of these nodes are \Mrel.  Hence by~\Cref{thm:Mrltd}, the periods over all the vanishing cycles crossed by $H_j$ are proportional on~$M$, and hence all the cylinders crossed by $H_j$ are $M$-parallel.  Since $\calC$ is a full equivalence class of $M$-parallel cylinders, all of the cylinders crossed by $H_j$ must lie in $\calC$.

We can thus write
\begin{align}\label{eq:horiz-eqns}
   H_j = \beta_j+ \sum_{i=1}^d  c_{i,j} \delta_i,
\end{align}
where $\beta_j$ is a relative homology class that does not cross any horizontal nodes, $\topl(\beta_j) \le \topl(H_j)$, and $c_{i,j}$ are some real numbers. Furthermore, $\topl(\delta_i) = \topl(H_j)$ for all~$i$, since $H_j$ has the same top level as the cylinders in $\calC$, and $\delta_i$ is a cross-curve of such a cylinder.

Without the $\beta_j$ term, $a_t^\calC u_s^\calC$ would act on $H_j$ in exactly the same way that $a_tu_s$ does, and $H_j(a_t^\calC u_s^\calC Y) =0$ would follow from the fact that~$M$ is defined by linear equations with real coefficients, which are preserved by the $\GL^+(2,\RR)$ action.   The presence of the $\beta_j$ term makes the proof more complicated.  We will use our sequence $a_{t_n}Y$ to prove the following

{\bf Claim:} The imaginary part $\Im\, \beta_j(Y) $ is zero.

Assuming the claim, the fact that $H_j(a_t^\calC u_s^\calC Y)=0$ follows easily. Indeed, we first compute the difference
\begin{align*}
  H_j&(a_t^\calC u_s^\calC Y) - a_tu_s H_j(Y)\\ &= \left(\beta_j(a_t^\calC u_s^\calC Y) + \sum_i c_{i,j} \delta_i(a_t^\calC u_s^\calC Y) \right) - \left(a_tu_s\beta_j(Y) + a_tu_s\sum_i c_{i,j}\delta_i(Y)\right) \\
  &=\left(\beta_j(Y) + a_tu_s\sum_i c_{i,j} \delta_i(Y) \right) - \left(a_tu_s\beta_j(Y) + a_tu_s\sum_i c_{i,j}\delta_i(Y)\right) \\
  &=\beta_j(Y) - a_tu_s(\beta_j(Y)) = 0,\\
\end{align*}
where in the last equality we used the Claim: since $\Im\,\beta_j(Y)=0$,  one computes $a_t u_s(\beta_j(Y))=\beta_j(Y)$.  Now since $H_j(Y)=0$, we have $a_tu_sH_j(Y)=0$, hence the above implies $H_j(a_t^\calC u_s^\calC Y)=0$, as desired.

To complete the proof of the Theorem, it thus remains to prove the Claim, for which we will use the convergent sequence $r_na_{t_n}Y \to p_0$ constructed in the beginning of the proof.  Since $r_na_{t_n} Y\in M$, we know that $H_j(r_na_{t_n} Y)=0$. Taking the imaginary part and using the expression \eqref{eq:horiz-eqns} gives
\begin{equation}  \label{eq:imag}
    \Im (\beta_j(r_n a_{t_n} Y)) + \sum_i c_{i,j} \Im(\delta_i(r_na_{t_n} Y))  =0
\end{equation}
(note that this again uses the fact that we are working with a linear variety defined by equations with real coefficients, so that $c_{i,j}$ are real).

The curve~$\delta_i$ on $r_na_{t_n}Y$ is a curve that crosses the cylinder~$C_i$. While~$\delta_i$ on~$r_na_{t_n}Y$ is not necessarily a cross-curve in the sense above, we claim that it has the same top level as the vanishing cycle of $C_i$. Indeed, to see this one argues as in the proof of \Cref{lm:cyl-node}: if $\delta_i$ had higher top level, then one could choose a closed geodesic representing the circumference curve of~$C_i$ that would cross the thick part of the surface at this higher level, which would then have length much larger than the magnitude of the period over the vanishing cycle; on the other hand, since $\delta_i$ crosses  $C_i$, its top level is at least the level of~$C_i$.   Since $C_i$ is a horizontal cylinder, its height $h_i(r_na_{t_n}Y)=r_ne^{t_n}h_i(Y) =r_ne^{t_n} \Im\, \delta_i(Y)$ on the surface~$r_na_{t_n}Y$ is approximated by $\Im\, \delta_i(r_na_{t_n}Y)$; in fact
\begin{align*}
  \Im\, \delta_i(r_na_{t_n}Y) - r_ne^{t_n} \Im\, \delta_i(Y) = o(r_n e^{t_n}),
\end{align*}
  as $n\to\infty$.
Substituting this into \eqref{eq:imag} gives
\begin{equation*}
  \Im\, \beta_j(r_na_{t_n} Y) + \sum_i c_{i,j}\left( r_ne^{t_n} \Im\, \delta_i(Y)+ o(r_n e^{t_n})\right)  = 0,
\end{equation*}
and dividing through by $r_ne^{t_n}$ gives
\begin{equation*}
  \frac{\Im(\beta_j(r_na_{t_n} Y))}{r_ne^{t_n}} + \sum_i c_{i,j}\Im\, \delta_i(Y) = 0.
\end{equation*}
Recall that $\beta_j$ is a curve with top level $\topl(\beta_j) \le \topl(H_j)$, while $\topl(H_j)$ is the level of the circumference curve of each of the cylinders $C_i$.  Hence, on surfaces in $W$, the magnitude of the period of $\beta_j$ is less than a constant multiple of the circumference of $C_i$.   The height of each $C_i$ on $r_na_{t_n}Y$ is within a constant factor of $r_ne^{t_n}$.  Each cylinder $C_i$ is degenerating as $n\to\infty$, so its modulus is going to infinity.  It follows that the left-hand term in the above goes to $0$ as $n\to \infty$.
Taking the limit, we get
\begin{equation*}
  \sum_i c_{i,j} \Im\, \delta_i(Y) = 0.
\end{equation*}
Combining this with the fact that at $Y$ the imaginary part of \eqref{eq:horiz-eqns} is $0$, we get $\Im\,\beta_j(Y))=0$, as claimed.
\end{proof}

\section{The linear equations of affine invariant submanifolds}\label{sec:AIM}
In this section we specialize our study of linear subvarieties to the case of affine invariant submanifolds. In our language, this is simply to say that we are talking about linear subvarieties of holomorphic strata (so all $m_i>0$) such that furthermore all the defining linear equations have real coefficients.
Avila, Eskin and M\"oller \cite{aem} show that for any affine invariant manifold~$M$ in a holomorphic stratum the image of the tangent space $T_{(X,\omega)}M\subset H^1(X,\zeroes;\CC)$ in $H^1(X;\CC)$ is symplectic under the natural symplectic pairing. We first carefully set up notation for all this.
\subsection{General setup}
\label{sec:holom-strata}
Denote
\begin{align*}
  \iota: H_1(X;\CC) \mathop{\inj} H_1(X,\zeroes;\CC),\\
  u: H_1(X\setminus \zeroes; \CC) \mathop{\surj} H_1(X;\CC)
\end{align*}
the natural maps, and by abuse of notation denote by $\langle,\rangle$ both natural intersection pairings
\begin{align*}
  H_1(X;\CC) \times H_1(X;\CC) \to \CC, \\
  H_1(X,\zeroes;\CC) \times H_1(X\setminus \zeroes; \CC) \to \CC,
\end{align*}
which satisfy the adjunction property
\begin{align*}
  \langle x, u(v) \rangle = \langle \iota(x), v\rangle
\end{align*}
for any $x\in H_1(X;\CC)$, $v\in H_1(X\setminus \zeroes;\CC)$.
Given a subspace $V\subset H_1(X;\CC)$ (or of $H_1(X,\zeroes;\CC)$, respectively $H_1(X\setminus \zeroes;\CC)$), we denote by $V^{\perp}$ the perp space with respect to $\langle,\rangle$ in $H_1(X;\CC)$ (respectively, of $H_1(X\setminus \zeroes;\CC)$ or $H_1(X,\zeroes;\CC)$). For a subspace $V$ of homology, we denote by $\Ann V$ its annihilator in cohomology.

The following result controls the space of deformations in~$M$ supported on an equivalence class of $M$-parallel cylinders, modulo purely relative deformations.  In the below we will study small deformations, which are elements of the tangent space $TM\subset H^1(X,\zeroes;\CC)$ at the given point $(X,\omega)$.
\begin{lm}
  \label{lm:proj-def}
  Let $M$ be an affine invariant manifold, and let $\calC$ be an equivalence class of $M$-parallel cylinders on some $(X,\omega)\in M$.  Let $V\subseteq H_1(X\setminus \zeroes;\CC)$ be the span of the circumference curves of the cylinders in $\calC$.  Then
  $$ \dim \iota^*\left(T M \cap \Ann V^{\perp}\right) \le 1.$$
\end{lm}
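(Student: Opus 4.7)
The plan is to reduce the statement to a dimension bound in absolute homology and then combine the Avila--Eskin--M\"oller symplecticity theorem with the $M$-parallelism relations via an explicit projection.

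First, I would use the perfect Poincar\'e--Lefschetz pairing $H_1(X,\zeroes;\CC)\otimes H_1(X\setminus\zeroes;\CC)\to\CC$ together with Poincar\'e duality $H^1(X;\CC)\cong H_1(X;\CC)$ (under which cup product becomes the intersection form $\langle\cdot,\cdot\rangle$) to translate the statement into absolute homology. Under these identifications $\iota^*\colon H^1(X,\zeroes)\to H^1(X)$ corresponds to $u\colon H_1(X\setminus\zeroes)\to H_1(X)$, so that $W:=\iota^*(TM)$ corresponds to some $W^\vee\subset H_1(X;\CC)$. Writing $\bar V:=u(V)$, the adjunction property $\langle\iota(x),y\rangle=\langle x,u(y)\rangle$ yields $\iota^{-1}(V^\perp)=\bar V^\perp$, whence $\iota^*(\Ann V^\perp)\subseteq\Ann\bar V^\perp$; and nondegeneracy of the intersection form identifies $\Ann\bar V^\perp$ with $\bar V$ itself. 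Combining these,
\[
\iota^*\bigl(TM\cap\Ann V^\perp\bigr)\;\subseteq\;W\cap\Ann\bar V^\perp\;\cong\;W^\vee\cap\bar V,
\]
so it suffices to show $\dim(W^\vee\cap\bar V)\le 1$.

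Next, I would assemble three homological inputs. By~\cite{aem}, $W^\vee$ is a symplectic subspace, so $H_1(X;\CC)=W^\vee\oplus N_0$ with $N_0:=(W^\vee)^\perp$ also symplectic. After reindexing I may take $\gamma_1,\dots,\gamma_d$ to be a basis of $\bar V$ consisting of circumference curves of cylinders in~$\calC$; since these curves are horizontal and therefore pairwise disjoint, $\bar V$ is isotropic. Finally, the $M$-parallelism of~$\calC$ produces real constants $c_i$ with $\int_{\gamma_i}\omega\equiv c_i\int_{\gamma_1}\omega$ identically on~$M$; differentiating these relations shows that each absolute cycle $\gamma_i-c_i\gamma_1$ annihilates $W$, hence lies in $N_0\cap\bar V$, giving $d-1$ linearly independent elements there.

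For the last step, let $p\colon H_1(X;\CC)\to N_0$ be the projection along $W^\vee$, so that $\ker p=W^\vee$ and
\[
\dim\bar V\;=\;\dim p(\bar V)\,+\,\dim\bigl(\bar V\cap W^\vee\bigr).
\]
Since each $\gamma_i-c_i\gamma_1$ already lies in $N_0$, it is fixed by~$p$, so $p(\bar V)$ contains these $d-1$ independent vectors, forcing $\dim p(\bar V)\ge d-1$ and therefore $\dim(\bar V\cap W^\vee)\le 1$, as required. The only delicate aspect is the chain of dualities in the first paragraph; once the problem is translated into homology, the AEM symplectic splitting combined with the parallelism relations immediately yields the bound via the projection argument.
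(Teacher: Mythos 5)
Your proof is correct and rests on the same two essential inputs as the paper's: the Avila--Eskin--M\"oller theorem that $\iota^*(TM)$ is symplectic, and the observation that $M$-parallelism places a codimension-one subspace of $u(V)$ (spanned by the classes $\gamma_i-c_i\gamma_1$) inside $\Ann(\iota^*(TM))$. Where the two arguments diverge is the linear-algebra packaging. The paper stays on the cohomology side, takes annihilators of $\iota^*(TM)\cap\Ann(u(V)^\perp)$, and then invokes a dimension count for the intersection of $u(V)$ and of $u(V)^\perp$ with the symplectic subspace $\Ann(\iota^*(TM))$. You instead dualize to absolute homology via \Poincare duality at the outset, use the symplectic direct-sum decomposition $H_1(X;\CC)=W^\vee\oplus(W^\vee)^\perp$ (this decomposition is precisely what symplecticity of the tangent space buys you), and close with a rank--nullity count for the projection onto the second summand. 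Your packaging is arguably cleaner: the projection argument makes the final bound transparent, and the duality reduction turns the annihilator bookkeeping into a literal intersection of two subspaces of $H_1(X;\CC)$. One minor remark: the observation that $\bar V$ is isotropic (the circumference curves being disjoint) is correct but plays no role in the argument as you have written it; it can be dropped.
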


Note that $V^{\perp}$ consists of all homology classes that don't intersect one of the cylinder circumference curves in $\calC$, so $\Ann V^{\perp}$ is the space of local deformations in the stratum supported on the union of these cylinders.  Hence one can think of $\iota^*\left(T M \cap \Ann V^{\perp}\right)$ as local deformations in $M$ supported on the union of cylinders in $\calC$, modulo purely relative deformations.

The proof of the lemma is a simple application of the result of Avila-Eskin-M\"oller~\cite{aem} that $\iota^*(TM)$ is symplectic, together with some formal linear algebra.
\begin{proof}
We first note that in our linear algebra setup, for any subspace $W\subseteq H_1(X\setminus \zeroes, \CC)$, we have
  \begin{align*}
    \iota\left( u(W)^\perp\right) \subseteq W^\perp,
  \end{align*}
while for any subspace $Z\subseteq H_1(X;\CC)$, we have
  \begin{align*}
    \iota^* \left( \Ann (\iota(Z))\right) \subseteq \Ann(Z).
  \end{align*}

  Using these two facts, we get
  \begin{align}
    \iota^*\left(T M \cap \Ann V^{\perp}\right) &\subseteq \iota^*(TM) \cap \iota^*\left(\Ann V^{\perp}\right) \\
        & \subseteq \iota^*(TM) \cap \iota^*\left(\Ann \left(\iota (u(V)^\perp)\right)\right)\\
    & \subseteq \iota^*(TM) \cap \Ann \left(u(V)^{\perp}\right).     \label{eq:inclusion}
  \end{align}

The above are subspaces of absolute cohomology, but the symplectic form is easier to understand in absolute homology, so we take the annihilator of the last term above:
 \begin{align*}
   \Ann\left(\iota^*(TM) \cap \Ann (u(V)^{\perp})\right) &= \Ann (\iota^*(TM)) + \Ann \left(\Ann (u(V)^{\perp})\right)\\
   & = \Ann (\iota^*(TM)) + u(V)^{\perp}.
 \end{align*}
 By this equality and \eqref{eq:inclusion}, to prove the Lemma it suffices to show that
 \begin{align}
   \label{eq:dual-ver}
   \dim \left(\Ann (\iota^*(TM)) + u(V)^{\perp}\right) \ge n-1,
 \end{align}
 where $n:=\dim H_1(X;\CC)$.

Now recall that $V$ is spanned by circumference curves of $M$-parallel cylinders, which is to say they remain parallel under small deformations in $TM$. Thus annihilating them imposes only one condition on $TM$, which is to say we have
\begin{align}
   \label{eq:cap}
   \dim \left( \Ann (\iota^* TM ) \cap u(V) \right) = \dim u(v) -1,
 \end{align}
By~\cite{aem}, $\iota^*(TM)\subset H^1(X;\CC)$ is symplectic subspace, and hence so is $\Ann(\iota^*(TM))$.  It follows that
\begin{align*}
  \dim ( u(V) \cap \Ann (\iota^*(TM))) +   \dim (u(V)^\perp \cap \Ann (\iota^*(TM))) = \dim \Ann(\iota^*(TM)).
\end{align*}

Combining this with \eqref{eq:cap} gives
\begin{align}
  \label{eq:7}
  \dim \left(u(V)^\perp \cap \Ann (\iota^*(TM))\right) =  \dim \Ann(\iota^*(TM)) - \dim u(v) + 1
\end{align}

Thus
\begin{align*}
  \dim &\left(u(V)^\perp + \Ann(\iota^*(TM)) \right) \\
       &= \dim u(V)^\perp + \dim \Ann(\iota^*(TM)) - \dim \left(u(V)^\perp \cap \Ann (\iota^*(TM))\right)\\
       &= \dim u(V)^\perp + \dim \Ann(\iota^*(TM)) - \left(  \dim \Ann(\iota^*(TM)) - \dim u(v) + 1 \right)\\
       &= \dim u(V)^\perp + \dim u(v) -1 = n-1,
\end{align*}
which establishes \eqref{eq:dual-ver}, so we are done.
\end{proof}

We can now easily reprove a partial converse to the cylinder deformation theorem, originally proved by Mirzakhani-Wright \cite[Theorem 1.5]{miwr}.
\begin{thm}
  \label{thm:converse-cyl-gen}
Let~$M$ be an affine invariant manifold in any holomorphic stratum.  Let $(X,\omega)\in M$, and let $\calC$ be a full equivalence class of horizontal $M$-parallel cylinders.  Then, up to purely relative deformations, the only small deformations of $(X,\omega)$ that stay in~$M$ and are supported on the union of the cylinders in $\calC$ are given by $a_t^\calC u_s^\calC(X,\omega)$, for small $t,s\in \RR$.
\end{thm}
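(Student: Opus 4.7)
I would reduce the theorem to a dimension count combining Lemma \ref{lm:proj-def} with the forward Cylinder Deformation Theorem \ref{thm:cyldeformation}. Set $W := TM \cap \Ann V^{\perp} \subseteq H^1(X,\zeroes;\CC)$, which is exactly the space of infinitesimal deformations in $TM$ supported on the union of the cylinders of $\calC$ (they annihilate all absolute classes disjoint from these cylinders). Purely relative deformations form $\ker(\iota^*)$, so the theorem is equivalent to the statement $\iota^*(W) = \iota^*(U)$, where $U \subseteq TM$ is the real two-dimensional tangent space at $(t,s)=(0,0)$ to the family $\{a_t^{\calC} u_s^{\calC}(X,\omega)\}_{t,s\in\RR}$. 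The forward theorem \ref{thm:cyldeformation} gives $U \subseteq TM$, and by construction $U \subseteq W$, while Lemma \ref{lm:proj-def} gives the upper bound $\dim_{\CC} \iota^*(W) \le 1$.

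Next I would identify $U$ explicitly. Let $\delta_i$ be a cross-curve of $C_i \in \calC$ with period $\delta_i(X,\omega) = x_i + i h_i$ and $h_i > 0$. The infinitesimal generators of $a_t^{\calC}$ and $u_s^{\calC}$ act on this period by $\delta_i \mapsto i h_i$ and $\delta_i \mapsto h_i$ respectively, and trivially on the period of any class not entering a cylinder of $\calC$. Hence the two real tangent vectors differ by multiplication by $i$, so $U \subseteq TM$ is in fact a complex line, of complex dimension $1$.

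It remains to show $\iota^*(U) \ne 0$, after which the chain of inclusions $\iota^*(U) \subseteq \iota^*(W)$ together with the dimension bound forces equality. For this I would pick any closed loop $\gamma \in H_1(X;\ZZ)$ that crosses some $C_i \in \calC$; the portion of $\gamma$ inside $C_i$ contributes a cross-segment whose period is modified by $a_t^{\calC} u_s^{\calC}$ in exactly the same way as $\delta_i$ (up to a real scalar given by the number of signed crossings). Since the action on $\delta_i$ is non-trivial, the class in $U$ evaluates non-trivially on $\iota(\gamma)$, and thus $\iota^*(U) \ne 0$.

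All real content of the argument is concentrated in Lemma \ref{lm:proj-def}, which itself rests on the Avila--Eskin--M\"oller symplecticity theorem. The only potential pitfall in the plan above is verifying $\iota^*(U) \ne 0$: a priori it is conceivable that the entire cylinder deformation be purely relative. However, the explicit cross-segment computation just described rules this out as soon as the class $V$ of circumferences is non-zero in absolute homology, which holds automatically because each $\lambda_{e_i}$ is a homologically non-trivial simple closed curve on $X$. Thus no further ingredient is needed.
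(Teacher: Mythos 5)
Your overall strategy is exactly the paper's: combine the upper bound $\dim_\CC \iota^*\bigl(TM\cap \Ann V^\perp\bigr)\le 1$ from \Cref{lm:proj-def} with the lower bound furnished by the forward Cylinder Deformation Theorem \Cref{thm:cyldeformation}. The paper's proof is two sentences and leaves implicit the two auxiliary points you spell out, namely that the tangent space $U$ to the $a_t^\calC u_s^\calC$-family is a \emph{complex} line inside $TM\cap\Ann V^\perp$, and that $\iota^*(U)\ne 0$. Your verification of the first (the $a$- and $u$-generators act on each cross-curve period $\delta_i(X,\omega)=x_i+ih_i$ by $ih_i$ and $h_i$, hence differ by multiplication by $i$) is correct.

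The justification offered for $\iota^*(U)\ne 0$, however, does not hold up. It rests on the assertion that ``each $\lambda_{e_i}$ is a homologically non-trivial simple closed curve on $X$,'' which is false in general: the core curve of a horizontal cylinder can be separating and hence trivial in $H_1(X;\CC)$ (it is only guaranteed to be nonzero in the punctured homology $H_1(X\setminus\zeroes;\CC)$, as in \Cref{lm:hom-indep}). Even when some $u(\lambda_{e_i})\ne 0$, choosing a single loop $\gamma$ crossing $C_i$ does not settle the matter, because $\iota^*(\xi_u)(\gamma)=\sum_i h_i\langle\gamma,u(\lambda_{e_i})\rangle$ and the contributions from the several cylinders of $\calC$ could in principle cancel. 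The robust way to obtain $\iota^*(U)\ne 0$ is the area argument: applying $a_t^\calC$ with $t>0$ strictly increases the flat area of $(X,\omega)$ by $(e^t-1)\sum_i\operatorname{Area}(C_i)>0$, while the area $\tfrac{i}{2}\int_X\omega\wedge\bar\omega$ is a function of the absolute periods of $\omega$ alone; thus the $a$-generator cannot be purely relative, and $\iota^*(U)\ne 0$ follows. With this correction the argument is complete and coincides with the paper's.
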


\begin{proof}
  By \Cref{lm:proj-def}, the space of such deformations is at most one-dimensional.  By the cylinder deformation theorem, \Cref{thm:cyldeformation}, the deformation given by applying $a_t^\calC u_s^\calC$ lies in~$M$.  Hence this one-complex dimensional family comprises all deformations of the specified type.
\end{proof}

For further use, we record the following easy statement
\begin{lm}   \label{lm:hom-indep}
The set of all horizontal vanishing cycles is linearly independent in punctured homology $H_1(X\setminus\zeroes;\CC)$.
\end{lm}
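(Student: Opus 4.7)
My plan is to exhibit, for each horizontal node $e\in\Ehor$, a dual relative cycle $\delta_e\in H_1(X,\zeroes;\CC)$ satisfying $\langle\delta_e,\lambda_f\rangle=\delta_{ef}$ under the intersection pairing $H_1(X,\zeroes;\CC)\times H_1(X\setminus\zeroes;\CC)\to\CC$ recalled in \Cref{sec:holom-strata}. Linear independence of the $\lambda_e$ then follows immediately: any putative relation $\sum_{e}c_e\lambda_e=0$ in $H_1(X\setminus\zeroes;\CC)$, paired with $\delta_f$, yields $c_f=0$ for each $f\in\Ehor$.

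To construct the $\delta_e$ I would appeal to the plumbing description from~\cite{BCGGMmsds}: on a smooth flat surface $(X,\omega)$ sufficiently close to $p_0$, each horizontal vanishing cycle $\lambda_e$ is realised as the core curve of a long horizontal cylinder $C_e\subset X$, and distinct cylinders $C_e, C_f$ have disjoint interiors. The boundary of each $C_e$ is a union of horizontal saddle connections with endpoints in $\zeroes$, so there is at least one zero on its top boundary and at least one on its bottom. I would then take $\delta_e$ to be any cross-curve of $C_e$ in the sense of~\cite{wright} introduced at the beginning of \Cref{sec:cyldef}: a saddle connection contained in $\overline{C_e}$ joining a bottom-boundary zero to a top-boundary zero. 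Since the endpoints of $\delta_e$ lie in $\zeroes$, this yields a well-defined class $[\delta_e]\in H_1(X,\zeroes;\CC)$, and by construction $\delta_e$ crosses $\lambda_e$ transversely in exactly one point.

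The intersection matrix $\bigl(\langle\delta_e,\lambda_f\rangle\bigr)_{e,f\in\Ehor}$ is then diagonal by inspection: the diagonal entries equal $1$ by construction, while for $f\ne e$ the core $\lambda_f$ lies in the open interior of $C_f$, which is disjoint from the closure $\overline{C_e}$ except possibly at shared boundary points of $\zeroes$. The one mildly delicate point, which I view as the only obstacle, is handling such shared zeros; but since $\lambda_f$ represents a class in $H_1(X\setminus\zeroes;\CC)$, its representative may be chosen to avoid $\zeroes$ entirely, so these incidences do not contribute to the pairing. This gives $\langle\delta_e,\lambda_f\rangle=0$ for $e\ne f$, the intersection matrix is the identity, and the lemma follows.
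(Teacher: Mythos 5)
Your argument is correct and follows the same overall strategy as the paper (exhibiting for each horizontal node $e$ a relative class $\delta_e$ pairing nontrivially with $\lambda_e$ and trivially with every other horizontal vanishing cycle), but your \emph{construction} of the $\delta_e$ is genuinely different. The paper's proof works purely with the level-graph topology of the multi-scale differential: it invokes \cite[Lemma 3.9]{BCGGMivc} to produce two marked zeros at levels $\le\ell(e)$ joined by a path contained in $X_{(\le\ell(e))}$ that crosses $e$ and no other horizontal node. Your version replaces this combinatorial input with flat geometry near $p_0$, taking the maximal horizontal cylinder $C_e\supset\mathbb{V}_e$ and a Wright cross-curve of it. Both work, and yours is arguably more concrete, but it carries a couple of geometric claims you should justify rather than assert: that distinct $C_e,C_f$ have disjoint interiors (true because two maximal cylinders in the same direction that overlap must coincide, and $C_e=C_f$ would force $\lambda_e$ homotopic to $\lambda_f$, impossible for vanishing cycles of distinct nodes), and that the boundary components of $C_e$ actually contain points of $\zeroes$ so a cross-curve exists (automatic in a holomorphic stratum, where the lemma is actually applied, but in a meromorphic stratum one must rule out a boundary meeting a pole — the flat metric is complete there, so a finite cylinder cannot border a pole; and one must also rule out $C_e$ being half-infinite). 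The paper's appeal to the level-graph lemma sidesteps all of these flat-geometric considerations, which is what it buys over your more visual route. Your handling of the intersection matrix, including the observation that shared boundary zeros do not contribute because $\lambda_f$ may be represented disjointly from $\zeroes$, is correct.
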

\begin{proof}For each horizontal vanishing cycle $\van$,  we can choose a homology class $\delta\in H_1(X,\zeroes;\CC)$ that intersects $\van$ and no other horizontal vanishing cycle.
For instance $\delta$ can be constructed by locating two marked zeros at levels below $\ell(e)$ that can be connected by a path that crosses no horizontal nodes except $e$, and is contained in $X_{(\le\ell(e))}$. The existence of such marked zeros is guaranteed by \cite[Lemma 3.9]{BCGGMivc}.
\end{proof}
We note that it is not true that the set of all vanishing cycles altogether is linearly independent in punctured homology. Indeed, if some irreducible component of the multi-scale differential does not contain any marked zero, then the sum of the vanishing cycles that is its boundary is homologous to zero.

\subsection{Minimal stratum}
\label{sec:min-strata}
We now specialize to the case of affine invariant manifolds in a \emph{minimal} holomorphic stratum $\Omega\calM_{g,1}(2g-2)$, i.e.~to the case when the differential has only zero, of maximal multiplicity. The special feature of the minimal stratum is that both maps $\iota$ and $u$ above are isomorphisms; in particular all horizontal vanishing cycles are linearly independent in the absolute homology $H_1(X;\CC)$. As always, we study the situation near some $p_0\in \pM\subset\Xi\overline{\calM}_{g,1}(2g-2)$, and the first result we obtain is the following.

\begin{prop}  \label{prop:pairwise-cross}
Let $e_1\ne e_2\in\Ehor$ be \Mrel horizontal nodes.  Then there is a defining equation $F$ of~$M$ that crosses $e_1,e_2$, and no other horizontal nodes, i.e $\Ehor[F]=\lbrace e_1,e_2\rbrace$.
\end{prop}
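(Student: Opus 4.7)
The plan is to identify the image $\pi(N)\subseteq\CC^{\Ehor}$ of the projection $\pi:H_1(X;\CC)\to\CC^{\Ehor}$ defined by $\pi(\gamma):=(\langle\gamma,\lambda_e\rangle)_{e\in\Ehor}$, and then exhibit an explicit element of $\pi(N)$ supported precisely on $\{e_1,e_2\}$; any such element lifts to the desired defining equation. In the minimal stratum, the maps $\iota$ and $u$ are isomorphisms, so I will identify $H_1(X;\CC)\cong H_1(X\setminus\zeroes;\CC)\cong H_1(X,\zeroes;\CC)$. By~\cite{aem}, the tangent space $TM$ corresponds, under Poincar\'e duality, to a symplectic subspace $V\subseteq H_1(X;\CC)$, and $N$ is identified with its symplectic complement $V^\perp$. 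By \Cref{lm:hom-indep} the vanishing cycles are linearly independent, and as they are mutually disjoint simple closed curves, $\Lambda:=\mathrm{span}(\lambda_e:e\in\Ehor)$ is isotropic in $H_1$. A short symplectic computation, using only $N=V^\perp$, gives $\pi(N)^\perp=L$, where $L\subseteq\CC^{\Ehor}$ is the space of coefficient vectors $(c_e)$ with $\sum_e c_e\lambda_e\in V$.

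The crux is computing $V\cap\Lambda$. For each $M$-parallel equivalence class $\calC$ of horizontal cylinders, the Cylinder Deformation Theorem (\Cref{thm:cyldeformation}) provides a one-parameter family of deformations in $M$ whose tangent vector is Poincar\'e-dual to $v_\calC:=\sum_{e\in E(\calC)}q_e\lambda_e\in V\cap\Lambda$, where $q_e>0$ is the height of $C_e$. These vectors are linearly independent, as their supports in $\Lambda$ are pairwise disjoint. The key dimension bound is $\dim(V\cap\Lambda)\leq k$, where $k$ is the total number of $M$-parallel classes of horizontal cylinders; this follows from a mild generalization of \Cref{lm:proj-def}: if $\tilde V\subseteq H_1(X\setminus\zeroes;\CC)$ is spanned by the circumference curves of cylinders lying in $k$ distinct $M$-parallel classes, then $\dim\iota^*(TM\cap\Ann\tilde V^\perp)\leq k$. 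Its proof is identical to that of \Cref{lm:proj-def}, the only change being the identity $\dim(\Ann(\iota^*TM)\cap u(\tilde V))=\dim u(\tilde V)-k$: the $k$ parallel directions among circumferences yield exactly $k$ independent linear relations, so annihilating all circumferences in $\tilde V$ imposes $k$ independent conditions on $TM$. Combining, $V\cap\Lambda=\bigoplus_{\calC}\CC v_\calC$, so $L$ is spanned by the vectors $(q_e\mathbf{1}_{\calC})_{\calC}\in\CC^{\Ehor}$, and therefore
\[
\pi(N)=\Big\{(v_e)\in\CC^{\Ehor}:\sum\nolimits_{e\in E(\calC)}q_e v_e=0\text{ for every }M\text{-parallel class }\calC\Big\}.
\]

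To conclude: since $e_1$ and $e_2$ are $M$-related, \Cref{thm:Mrltd}, together with the fact that periods over horizontal vanishing cycles are real, shows that $C_{e_1}$ and $C_{e_2}$ lie in a common $M$-parallel class $\calC_0$. The vector $v\in\CC^{\Ehor}$ with $v_{e_1}:=-q_{e_2}$, $v_{e_2}:=q_{e_1}$, and $v_e:=0$ for $e\notin\{e_1,e_2\}$ satisfies every constraint defining $\pi(N)$: the $\calC_0$-constraint reads $q_{e_1}(-q_{e_2})+q_{e_2}(q_{e_1})=0$, and the constraints for all other classes are vacuous. Hence $v\in\pi(N)$, and any lift $F\in N$ of $v$ satisfies $\Ehor[F]=\{e_1,e_2\}$. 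The main obstacle is the dimension bound $\dim(V\cap\Lambda)\leq k$, which requires the generalization of \Cref{lm:proj-def} above and depends crucially on the symplecticity of $\iota^*(TM)$ from~\cite{aem}, a hallmark feature of affine invariant manifolds that is not available for general linear subvarieties.
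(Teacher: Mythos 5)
Your approach differs genuinely from the paper's: you compute the entire image $\pi(N)\subseteq\CC^{\Ehor}$ of the crossing map and exhibit an explicit preimage, which requires both a $k$-class generalization of \Cref{lm:proj-def} and the Cylinder Deformation Theorem as inputs.  The paper instead restricts attention to the single $M$-cross-equivalence class $\Lambda$ containing $e_1,e_2$, applies \Cref{lm:proj-def} exactly as stated with $V$ the span of circumferences of the \emph{full} $M$-parallel class (the inclusion $W\subset V$ gives the needed monotonicity), and closes the argument by counting rref equations; \Cref{thm:cyldeformation} is not invoked.

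There is, however, a gap in your identification of $V\cap\Lambda$.  \Cref{thm:cyldeformation} applies to a \emph{full} equivalence class $\calC$ of $M$-parallel cylinders on $(X,\omega)$, and such a class may contain cylinders of bounded modulus that do not correspond to horizontal nodes of $p_0$: $M$-parallelism constrains circumference periods, not moduli, and \Cref{lm:cyl-node} only forces cylinders of \emph{large} modulus to be node cylinders.  The tangent vector of $a_t^\calC$ is therefore Poincar\'e-dual to $\sum_{C\in\calC}h_C\lambda_C$ summed over \emph{all} cylinders in $\calC$; that vector lies in $V$ but in general not in $\Lambda$, while your truncation $v_\calC=\sum_{e\in E(\calC)}q_e\lambda_e$ to the node terms lies in $\Lambda$ but need not lie in $V$.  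So the claim $v_\calC\in V\cap\Lambda$, and with it the identification $V\cap\Lambda=\bigoplus_\calC\CC v_\calC$ and the resulting formula for $\pi(N)$, is not justified.  The paper's proof is structured precisely to avoid this issue: it only needs the upper bound $\dim(TM\cap\Ann W^\perp)\le 1$, obtained from $\dim(TM\cap\Ann V^\perp)\le 1$ via $W\subset V$, and never has to produce any element of $\mathrm{PD}(TM)\cap W$.  Your argument could plausibly be repaired by applying your generalized lemma to the span $\tilde V$ of \emph{all} circumferences in all $k$ classes, using CDT to identify $\mathrm{PD}(TM)\cap\tilde V=\bigoplus_\calC\CC\bigl(\sum_{C\in\calC}h_C\lambda_C\bigr)$, and then intersecting with $\Lambda\subset\tilde V$; but that last intersection requires knowing which combinations of these vectors land inside $\Lambda$, an additional step you would need to supply.
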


\begin{proof}
Let $\lambda_1,\lambda_2$ be the vanishing cycles for $e_1$, $e_2$, and let $\Lambda$ be the \Mequiv class containing them. Let $W$ be the span of the elements of $\Lambda$.    By ~\Cref{lm:hom-indep}, $\dim W = |\Lambda|$.  By \Cref{thm:Mrltd}, the vanishing cycles in $\Lambda$ all have proportional periods on $M$, and so the corresponding cylinders are $M$-parallel.  Now let $V$ be the span of the vanishing cycles of all cylinders that are $M$-parallel to these.  Since $W\subset V$, by \Cref{lm:proj-def} (and using that the map $\iota$ is an isomorphism, since we are working in the minimal stratum), we get
  \begin{align*}
    \dim (TM \cap \Ann W^{\perp})  \le \dim (TM \cap \Ann V^{\perp}) \le 1.
  \end{align*}
Hence
\begin{align*}
  \dim \Ann W^{\perp} - \dim (TM \cap \Ann W^{\perp}) \ge \dim \Ann W^{\perp}  -1 = |\Lambda| -1.
\end{align*}

The left-hand side above is equal to the number of equations in the rref basis that cross some vanishing cycle in $\Lambda$.  Since no equation can cross exactly one element of $\Lambda$, we get the desired conclusion.
\end{proof}

\begin{prop}  \label{prop:pairwise-circum}
Suppose $F=a_1\lambda_1 + \cdots + a_n \lambda_k$ is a defining equation of~$M$ at~$p$, where the $\lambda_i$ are some distinct horizontal vanishing cycles.  Then $F$ is a sum of defining equations of~$M$ that have the form of pairwise proportionalities $b_j \lambda_j =c_l \lambda_l$, for $1\le j,l\le k$, and some $b_j,c_l\in\RR$.
\end{prop}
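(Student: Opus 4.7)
The strategy is to partition the vanishing cycles appearing in $F$ into $M$-parallel equivalence classes, apply a multi-class generalization of \Cref{lm:proj-def}, and conclude that $F$ decomposes into within-class pieces, each of which is a sum of pairwise proportionalities.

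First, I would group $\{\lambda_1,\ldots,\lambda_k\}$ into the maximal subsets $P_1,\ldots,P_s$ with pairwise proportional periods on $M$ (well-defined by \Cref{thm:Mrltd}, since proportionality is transitive). By \Cref{lm:cyl-node}, each $\lambda_i$ is the circumference of a horizontal cylinder on $p$; let $\tilde{\calC}_j$ denote the complete $M$-parallel equivalence class of horizontal cylinders on $p$ containing those corresponding to $P_j$, let $V_j\subseteq H_1(X;\CC)$ be the span of the circumference curves in $\tilde{\calC}_j$, and set $V=\bigoplus_j V_j$. By \Cref{lm:hom-indep}, $\dim V=\sum_j|\tilde{\calC}_j|$, and $V$ is isotropic for the symplectic intersection form on $H_1(X;\CC)$ since horizontal vanishing cycles have pairwise zero intersection.

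The main step is to generalize \Cref{lm:proj-def} from a single $M$-parallel class to the union of $s$ classes: the same linear-algebraic argument applies, using isotropy of $V$ and symplecticity of $\iota^*(TM)$ from \cite{aem}, but with $s$ independent proportionality constraints (one per class) in place of the single constraint, yielding $\dim\iota^*(TM\cap\Ann V^\perp)\le s$. In the minimal stratum, $\iota$ is an isomorphism, so $\dim(TM\cap\Ann V^\perp)\le s$; dualizing through the symplectic form gives $\dim(N\cap V)\ge\dim V-s$, where $N=\Ann(TM)$ is the space of defining equations. On the other hand, for each $j$ the within-class pairwise proportionalities furnished by \Cref{thm:Mrltd} --- namely the relations $\lambda-d\,\tilde{\lambda}_j$ for $\lambda\in\tilde{\calC}_j$, with $\tilde{\lambda}_j$ a fixed representative and $d\in\RR$ (real since $M$ is defined by real linear equations) the period ratio --- span a subspace $U_j:=V_j\cap N$ of dimension $|\tilde{\calC}_j|-1$. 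Thus $\bigoplus_j U_j\subseteq N\cap V$ already has dimension $\dim V-s$, forcing $N\cap V=\bigoplus_j U_j$.

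Finally, since $F\in N\cap V$, the uniqueness of the decomposition $V=\bigoplus_j V_j$ together with $N\cap V=\bigoplus_j U_j$ forces $F=\sum_j F_j$ where $F_j=\sum_{\lambda_i\in P_j}a_i\lambda_i\in U_j$ is a defining equation involving only vanishing cycles in a single class. Writing $\pi_i=d_i\tilde{\pi}_j$ for $\lambda_i\in P_j$, the defining-equation condition gives $\sum_{\lambda_i\in P_j}a_id_i=0$, so $F_j=\sum_{\lambda_i\in P_j}a_i(\lambda_i-d_i\tilde{\lambda}_j)$ is an explicit sum of pairwise proportionalities among the $\lambda_i$'s appearing in $F$, which yields the desired decomposition. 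The main obstacle is the multi-class generalization of \Cref{lm:proj-def}, which requires a careful symplectic linear algebra argument refining the single-class proof; the rest of the argument is formal direct-sum bookkeeping.
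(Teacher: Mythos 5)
Your overall strategy --- partition into $M$-parallel classes, pass to $V=\bigoplus V_j$, and show $N\cap V=\bigoplus U_j$ --- is a reasonable plan and, if the middle step worked, would give a cleaner route to the decomposition than the paper's. But the middle step, as written, has a genuine gap.

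The logical problem is in the sentence where you "dualize." You derive (via a generalized \Cref{lm:proj-def}) that $\dim\iota^*(TM\cap\Ann V^\perp)\le s$, and then assert that dualizing through the symplectic form gives $\dim(N\cap V)\ge\dim V-s$. Two things go wrong. First, this inequality is in the wrong direction: you already know $\dim(N\cap V)\ge\dim V-s$ for free, since $\bigoplus_j U_j\subseteq N\cap V$ and $\dim\bigoplus_j U_j=\dim V-s$. To conclude $N\cap V=\bigoplus_j U_j$ you need the opposite inequality, $\dim(N\cap V)\le\dim V-s$ --- that is, you must rule out defining equations relating circumference-curve periods \emph{across} different $M$-parallel classes, which is the real content here. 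Second, the "dualization" itself is not a valid step: setting $T=\sigma^{-1}(TM)$ so that $N=T^\perp$, the quantities $\dim(T\cap V)$ and $\dim(T^\perp\cap V)$ satisfy only $\dim(T\cap V)+\dim(T^\perp\cap V)\le\dim V$ (equality fails unless $V$ decomposes under $H=T\oplus T^\perp$, which is not automatic), so a bound on $\dim(T\cap V)$ gives no lower bound on $\dim(T^\perp\cap V)$. Worse, if you trace the linear algebra in your generalized \Cref{lm:proj-def}, the input to that argument is precisely the lower bound $\dim(N\cap V)\ge\dim V-s$ coming from the $U_j$'s, so the whole loop is circular and produces no new information.

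The missing ingredient is an upper bound on the dimension of the span of defining equations supported on circumference curves, and this is exactly what the paper's \Cref{lm:pairs-gen} supplies: working with the span $\Lambda$ of \emph{all} horizontal vanishing cycles, it counts the number $k$ of rref basis equations crossing $\Lambda$, shows these contribute $k$ linearly independent pairwise proportionalities $\alpha_i$ lying in $\Lambda\cap N$, and uses the symplectic inequality $\dim(\Lambda\cap N)\le\dim N-\dim(\Lambda^\perp\cap N)=k$ to get equality. (As it happens, each $\alpha_i$ is automatically within a single $M$-parallel class by \Cref{thm:Mrltd} and \Cref{lm:Mrref}, so $\Lambda\cap N$ does decompose class-by-class --- but this has to be \emph{derived}, not asserted.) The paper then runs a separate reduction argument to replace proportionalities involving $\lambda_l$ with $l>k$ by ones involving only $\lambda_1,\dots,\lambda_k$. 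Your closing step --- using $\pi_i=d_i\tilde\pi_j$ with $\tilde\lambda_j$ chosen in $P_j$ to rewrite $F_j$ as $\sum a_i(\lambda_i-d_i\tilde\lambda_j)$ --- is correct and does neatly package the reduction, provided $F_j\in N$ is established; but that is exactly what the unproven dimension count was supposed to give. Two smaller points: \Cref{lm:cyl-node} gives the implication cylinder-of-large-modulus $\Rightarrow$ vanishing cycle, not the converse (which comes from plumbing); and when forming $V_j$ you should restrict to cylinders in $\tilde{\calC}_j$ whose circumferences are actually horizontal vanishing cycles, since otherwise \Cref{lm:hom-indep} does not apply.
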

\begin{proof}
Let $\lambda_1,\ldots, \lambda_k, \lambda_{k+1},\ldots,\lambda_n$  be all the horizontal vanishing cycles. By \Cref{lm:pairs-gen}, we can write
\begin{align*}
    F=F_1 + \cdots + F_\ell,
\end{align*}
where each $F_i$ is a defining equation of $M$ of the form $F=b\lambda_j + c\lambda_l$.  Order the $F_i$ in such a way that $F_1,\ldots, F_{\ell'}$ are of the form $b\lambda _j + c\lambda_l$ with $1\le j,l\le k$, and the remaining equations $F_{\ell'+1},\ldots,F_\ell$ are not of this form.  If $\ell=\ell'$, then we are done, so suppose $\ell > \ell'$ and recall that $\lambda_1,\ldots,\lambda_n$ are linearly independent by~\Cref{lm:hom-indep}.  If all the $F_{\ell'+1},\ldots,F_\ell$ are of the form $ b \lambda_j + c\lambda_j$ with both $j,l>k$, then since $F$ itself does not have any $\lambda_i$ terms with $i>k$, we must also have $F=F_1+\cdots+F_{\ell'}$, and we are done.  Otherwise, we can assume that some equation, without loss of generality $F_{\ell'+1}$, is of the form $b\lambda_j + c\lambda_l$, with $j\le k<l$.  In this case, since in the sum $F=F_1+\cdots + F_{\ell}$, the $\lambda_l$ terms must cancel out,  some other equation, without loss of generality $F_{\ell'+2}$, must have the form $b'\lambda_{j'} + c' \lambda_l$.  We can then write the following new decomposition of $F$ into defining equations:
  \begin{align*}
    F &= F_1 + \cdots + F_{\ell'} + \left(F_{\ell'+1} +\frac{c'}{c} F_{\ell'+1} \right) + \left(F_{\ell'+2} -\frac{c'}{c} F_{\ell'+1} \right) + F_{\ell'+3} + \cdots + F_\ell \\
      &=F_1 + \cdots + F_{\ell'} +\left(1+\frac{c'}{c}\right) F_{\ell'+1} +\left(b'\lambda_{j'} - \frac{c'b}{c}\lambda_j\right) + F_{\ell'+3} + \cdots + F_\ell.
  \end{align*}
Note that the $(\ell'+2)$-th term now involves $\lambda_j$ instead of $\lambda_l$.  Thus the total number of appearances of terms $\lambda_l$ with $l>k$ has decreased by $1$.  Continuing in this fashion, we arrive at a decomposition with no such terms, and we are done.
\end{proof}

The proof above used the following statement
\begin{lm}  \label{lm:pairs-gen}
Let $\lambda_1,\ldots, \lambda_n$  be all the horizontal vanishing cycles.  Then any defining equation of~$M$ of the form $F=a_1\lambda_1 + \cdots + a_n \lambda_n=0$ is a sum of defining equations of~$M$ of the form $b \lambda_j + c \lambda_l$.
\end{lm}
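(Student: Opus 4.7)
The plan is to decompose $F$ according to the \Mequiv classes of horizontal nodes from \Cref{df:Mrelated}, and then to use the pairwise proportionalities within each class supplied by \Cref{thm:Mrltd}. I would first partition $\Ehor$ into the \Mequiv classes $\Lambda_1,\ldots,\Lambda_K$, giving the direct sum decomposition $V := \Span_{\CC}(\lambda_1,\ldots,\lambda_n) = \bigoplus_k V_k$ with $V_k := \Span(\lambda_e : e \in \Lambda_k)$, and write $F = F_1 + \cdots + F_K$ with each $F_k \in V_k$.

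Within a single class $\Lambda_k$, the periods over the horizontal vanishing cycles are proportional on $M$ by \Cref{thm:Mrltd}, with non-zero rational ratios $c_e^{(k)}$ (extracted explicitly from the proof of \Cref{thm:Mrltd}). Fixing a representative $e_0^{(k)} \in \Lambda_k$ normalized so that $c_{e_0^{(k)}}^{(k)} = 1$, each difference $\lambda_e - c_e^{(k)} \lambda_{e_0^{(k)}}$ is itself a defining equation of the pairwise form required by the Lemma. Assuming that each $F_k$ is a defining equation of $M$ (see below), the within-class defining condition becomes $\sum_{e \in \Lambda_k} a_e c_e^{(k)} = 0$, and eliminating the coefficient of $\lambda_{e_0^{(k)}}$ yields
\begin{equation*}
F_k \= \sum_{e \in \Lambda_k \setminus \{e_0^{(k)}\}} a_e\bigl(\lambda_e - c_e^{(k)} \lambda_{e_0^{(k)}}\bigr),
\end{equation*}
exhibiting $F_k$ as a sum of pairwise defining equations. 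Summing over $k$ then expresses $F$ itself as a sum of pairwise defining equations.

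The main obstacle is establishing that each $F_k$ is in fact a defining equation. Evaluating $F = 0$ on $M$ and grouping by \Mequiv class gives $\sum_k \alpha_k \int_{\lambda_{e_0^{(k)}}}\omega = 0$ on $M$, where $\alpha_k := \sum_{e \in \Lambda_k} a_e c_e^{(k)}$, so the claim reduces to showing that the period functions $\int_{\lambda_{e_0^{(k)}}}\omega$ for representatives of the distinct \Mequiv classes are linearly independent as holomorphic functions on $M$. I would establish this independence by combining the symplectic structure on $TM \subseteq H^1(X;\CC)$ supplied by \cite{aem} with the Cylinder Deformation Theorem \Cref{thm:cyldeformation} and its converse: the cylinder-deformation tangent vectors $u^{\calC_k}$ for the distinct $M$-parallel cylinder classes have disjoint supports and lie in the kernel of the horizontal-period evaluation map $\phi : TM \to \bigoplus_e \CC$, $v \mapsto (v(\lambda_e))_e$, contributing $K$ linearly independent elements to $\ker \phi$. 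Combined with the Mirzakhani--Wright rank bound $K \leq \dim_{\CC} TM / 2$ available in the minimal stratum AIM setting and a careful dimension count using the symplectic splitting $H^1 = TM \oplus TM^{\omega}$, this forces the image of $\phi$ to have dimension exactly $K$, yielding the required linear independence of the period functions and finishing the argument.
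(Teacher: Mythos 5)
Your decomposition plan is conceptually on the right track: decompose $F$ by \Mequiv classes, use the within-class pairwise proportionalities from~\Cref{thm:Mrltd}, and reduce the whole lemma to the claim that each $F_k$ is already a defining equation, i.e.~that the period functions $\int_{\lambda_{e_0^{(k)}}}\omega$ for representatives of distinct classes are linearly independent on~$M$. That reduction is correct, and the final rewriting of each $F_k$ as a sum of differences $a_e(\lambda_e - c_e^{(k)}\lambda_{e_0^{(k)}})$ is also fine once the reduction is granted.

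However, the proposed argument for the linear-independence step is not sound. Producing $K$ linearly independent cylinder-deformation vectors inside $\ker\phi$ gives a \emph{lower} bound on $\dim\ker\phi$, hence an \emph{upper} bound $\dim\Im\phi \le \dim TM - K$; combined with the within-class proportionalities this gives $\dim\Im\phi\le K$, but your stated claim $K \le \dim_\CC TM/2$ only says $\dim TM - K \ge K$, which is consistent with $\dim\Im\phi$ being anywhere from $0$ to $K$. Nothing in the sketch supplies the needed lower bound $\dim\Im\phi\ge K$, so the ``careful dimension count'' is doing all the work without being carried out. (In fact $\ker\phi$ is typically much larger than the span of the $u^{\calC_k}$, so the $K$ elements do not determine $\dim\ker\phi$.) If you want to pursue this route, the correct mechanism is different: on $TM$ the functional $v \mapsto v(\lambda_{e_0^{(k)}})$ is, up to a nonzero scalar, the symplectic pairing with the cylinder-shear vector $u^{\calC_k}$; linear independence of the functionals then follows from linear independence of the $u^{\calC_k}$ and nondegeneracy of the symplectic form on $TM$. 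This is a genuinely usable argument, but it is not the one you wrote.

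The paper's own proof avoids both the cylinder deformation theorem and this linear-independence claim. It works directly with the space $\Lambda\cap\Ann TM$ of all defining equations lying in $\Lambda=\Span(\lambda_1,\ldots,\lambda_n)$: it exhibits $k$ linearly independent pairwise proportionalities $\alpha_i$ (one for each rref basis equation that crosses a horizontal node, via \Cref{thm:Mrltd}), and separately shows $\dim(\Lambda\cap\Ann TM) \le \dim\Ann TM - \dim(\Lambda^\perp\cap\Ann TM) = k$ using the symplecticity of $\Ann TM$ from \cite{aem}. Equality forces the $\alpha_i$ to span, and $F$ then lies in their span. This is more self-contained (it does not invoke~\Cref{thm:cyldeformation} or its converse) and sidesteps the step where your proposal has a gap.
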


\begin{proof}
Consider a rref basis of defining equations of~$M$.  Suppose that among these equations, $F_1,..., F_k$ are the ones that cross some vanishing cycle among $\lambda_1,\ldots,\lambda_n$. We first claim that each such $F_i$ must cross at least two of the these horizontal vanishing cycles.  In fact, by \Cref{prop:monodromy}, there is a linear relation among the vanishing cycles crossed by $F_i$.   This could include vertical vanishing cycles, but by \Cref{prop:eqsquantitative}(1), these all lie at lower level than the horizontal nodes crossed.  Considering the limit of this relation as we approach $p_0$, and noting that the period of a horizontal vanishing cycle must be non-zero near $p_0$, we see that the relation must involve at least two horizontal vanishing cycles, as claimed.

Now for each equation $F_i$, consider the pivot horizontal vanishing cycle $\lambda_j$ for that equation, and choose some other horizontal vanishing cycle $\lambda_l$ crossed by $F_i$ (whose existence was just established).  By \Cref{thm:Mrltd}, there is some equation $\alpha_i=b\lambda_j + c\lambda_l$ that holds on $M$.   The $\alpha_1,\ldots,\alpha_k$ must be linearly independent, since each involves a pivot node that doesn't appear in the others.  Thus we get $k$ linearly independent relations, each establishing that periods over a pair of horizontal vanishing cycles are proportional.

Now let $\Lambda = \Span (\lambda_1,\ldots,\lambda_n)$.  By \cite{aem}, the tangent space $TM\subseteq H^1(X;\CC)$ is symplectic (here we again use that we are in the minimal stratum, so absolute and relative homology are the same).   Hence, $\Ann TM$ is also symplectic, which implies that
\begin{align*}
    \dim \Lambda \cap \Ann TM = \dim \Ann TM - \dim \Lambda^{\perp} \cap \Ann TM.
\end{align*}
We see that the right hand side equals $k$, since by assumption there are exactly $k$ rref equations cutting out~$M$ that have non-zero intersection with an element of $\Lambda$.  All $\alpha_1,\ldots,\alpha_k$ lie in $\Lambda \cap \Ann TM$, and since they are linearly independent we have
\begin{align*}
    \Lambda \cap \Ann TM = \Span (\alpha_1,\ldots,\alpha_k),
  \end{align*}
  and we are done, since $F$ also belongs in the left-hand space.
\end{proof}

\begin{proof}[Proof of \Cref{thm:AIM}]
  Part (1) will follow easily from \Cref{prop:pairwise-cross}.  We argue by induction on $n$, the number of horizontal nodes crossed by the defining equation $F$.  If $n\le 2$,  there is nothing to prove.  So suppose $n\ge 3$.  First, consider the case there exists a defining equation $F'$ with $\Ehor[F']\subsetneq \Ehor[F]$ and $\Ehor[F']\ne \emptyset$.  Subtracting from $F$ a suitable multiple $cF'$ gives an equation $F''$ that crosses a strictly smaller set of horizontal nodes than $F$.  Applying the inductive hypothesis to $F',F''$ gives that each can be written as a sum of defining equations that cross at most two horizontal nodes, and hence so can $F=cF'+F''$.  On the other hand, in the case that no such $F'$ exists, then any pair of horizontal nodes crossed by $F$ are \Mrel, by definition.  Pick two of these nodes and call them $e_1,e_2$.  By \Cref{prop:pairwise-cross}, there is a defining equation $F_0$ with $\Ehor[F_0] = \{e_1,e_2\}$.  Subtracting from $F$ a suitable multiple $cF_0$ gives an equation $F''$ that crosses a strictly smaller set of horizontal nodes than $F$.  By induction, this $F''$ can be written as a sum of defining equations that cross at most two horizontal nodes, and hence so can $F=cF_0+F''$.   This proves part (1).

  Part (2) is just a restatement of \Cref{prop:pairwise-circum}.
\end{proof}

\subsection{Counterexamples in general holomorphic strata}
\label{sec:counterex}

In this section, we show that analogs of certain results proved in \Cref{sec:min-strata} fail in general holomorphic strata with multiple zeros.

\begin{exa}
  \label{exa:counter-pairwise-circum}
  We give an example that shows that \Cref{prop:pairwise-circum} fails in general holomorphic strata.  Define an affine invariant manifold $M \subset \Omega\calM_{5,8}(1^8)$ to consist of all surfaces obtained as degree two branched covers of a surface in $\Omega\calM_{2,2}(1,1)$, branched over $4$ points none of which is a zero of the differential, with topological data as in \Cref{fig:RelThree}.   Then $M$ admits a degeneration $p_0$ where six classes in punctured homology $\lambda_1,\lambda_{1'}, \lambda_2,\lambda_{2'}, \lambda_3,\lambda_{3'}$ are vanishing cycles of horizontal nodes, as shown in \Cref{fig:RelThree}.  Near this boundary point, $M$ is cut out by the $8$ period equations shown next to the figure.

  Now note that taking the sum of the first three equations, and using that the relation $0 = \lambda_1+\lambda_2+\lambda_3+\lambda_{1'}+\lambda_{2'}+\lambda_{3'}$ holds in absolute homology, we see that
  \begin{align*}
    \int_{2\lambda_1+2\lambda_2 + 2\lambda_3} \omega = 0
  \end{align*}
is an equation satisfied on $M$.  However, no pair from $\lambda_1,\lambda_2,\lambda_3$ need to have proportional periods on $M$, so the above equation cannot be written as a sum of pairwise equations among these three vanishing cycles.  Hence \Cref{prop:pairwise-circum} fails for this $M$.

\begin{figure}
  \begin{minipage}[t]{0.55\linewidth}
    \vspace{0pt}
    \centering
    \includegraphics[width=\linewidth]{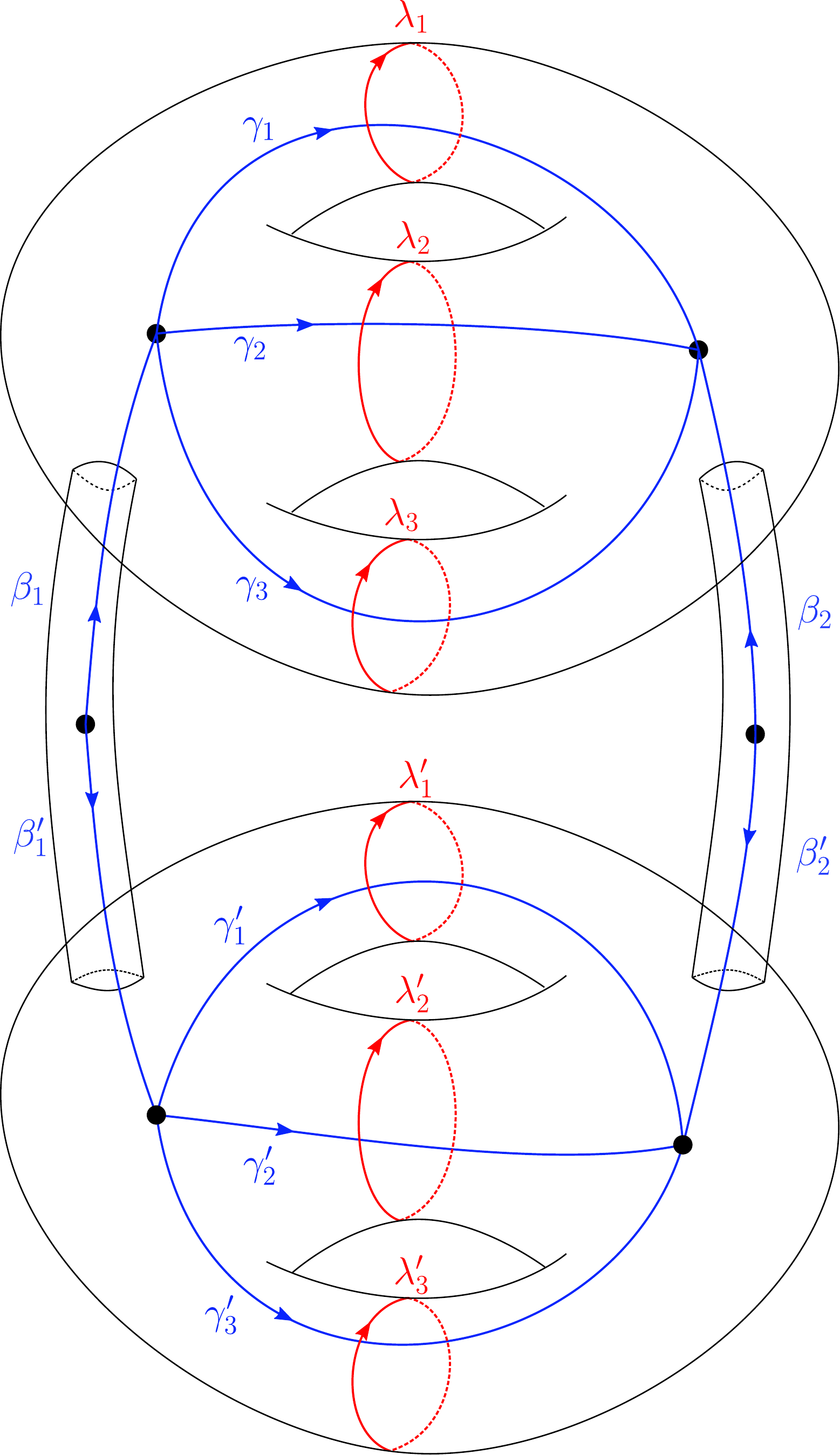}
  \end{minipage}
  \begin{minipage}[t]{0.50\linewidth}
    \vspace{0pt}
    \centering
    \renewcommand{\arraystretch}{1.7}
    \begin{tabular}{c}
      \hskip -1cm Period equations cutting out $M$ \\\hline
      $\int_{\lambda_1} \omega - \int_{\lambda_1'} \omega = 0$ \\
      $\int_{\lambda_2} \omega - \int_{\lambda_2'} \omega = 0$ \\
      $\int_{\lambda_3} \omega - \int_{\lambda_3'} \omega = 0$ \\
      $\int_{\gamma_1} \omega - \int_{\gamma_1'} \omega = 0$ \\
      $\int_{\gamma_2} \omega - \int_{\gamma_2'} \omega = 0$ \\
      $\int_{\gamma_3} \omega - \int_{\gamma_3'} \omega = 0$ \\
      $\int_{\beta_1} \omega - \int_{\beta_1'} \omega = 0$ \\
      $\int_{\beta_2} \omega - \int_{\beta_2'} \omega = 0$
    \end{tabular}
  \end{minipage}
  \caption{Description of an affine invariant manifold $M$ in the stratum $\Omega\calM_{5,8}(1^8)$}
  \label{fig:RelThree}
\end{figure}

\end{exa}

\begin{exa}
    \label{exa:counter-pairwise-cross}
We now give a \emph{local} counterexample $M$ to \Cref{prop:pairwise-cross} in a non-minimal stratum.  The meaning of local here is that there exists $p_0\in\partial\LMS$, and a neighborhood $U\ni p_0$ with a subvariety $M\subset U\cap\omodulin(\mu)$ such that:
  \begin{enumerate}
  \item at each its point $M$ is locally cut out by linear equations in period coordinates, \label{item:lin}
  \item the closure $\oM$ in $U$ is an analytic subvariety of $U$, containing $p_0$ \label{item:an}
  \item $\iota^*(TM)$ is a symplectic subspace.  \label{item:symp}
  \end{enumerate}
In other words, our $M$ will be consistent with all the local analyticity and symplecticity properties used in the proofs above, but we do not claim that~$M$ is actually the intersection of some (global) affine invariant manifold with~$U$.

We define $M\subset \Omega\calM_{3,3}(1,1,2)$ according to \Cref{fig:NoPairwise}.  Here the boundary point has three horizontal nodes, with vanishing cycles $\lambda_1, \lambda_2, \lambda_3$, and no other nodes.  (This does not specify a unique boundary point, since there is additional data about the geometry of the stable Riemann surface with differential.  Any choice of this data will do.)

\begin{figure}[h]
  \begin{minipage}[t]{0.8\linewidth}
    \vspace{0pt}
    \centering
    \includegraphics[width=\linewidth]{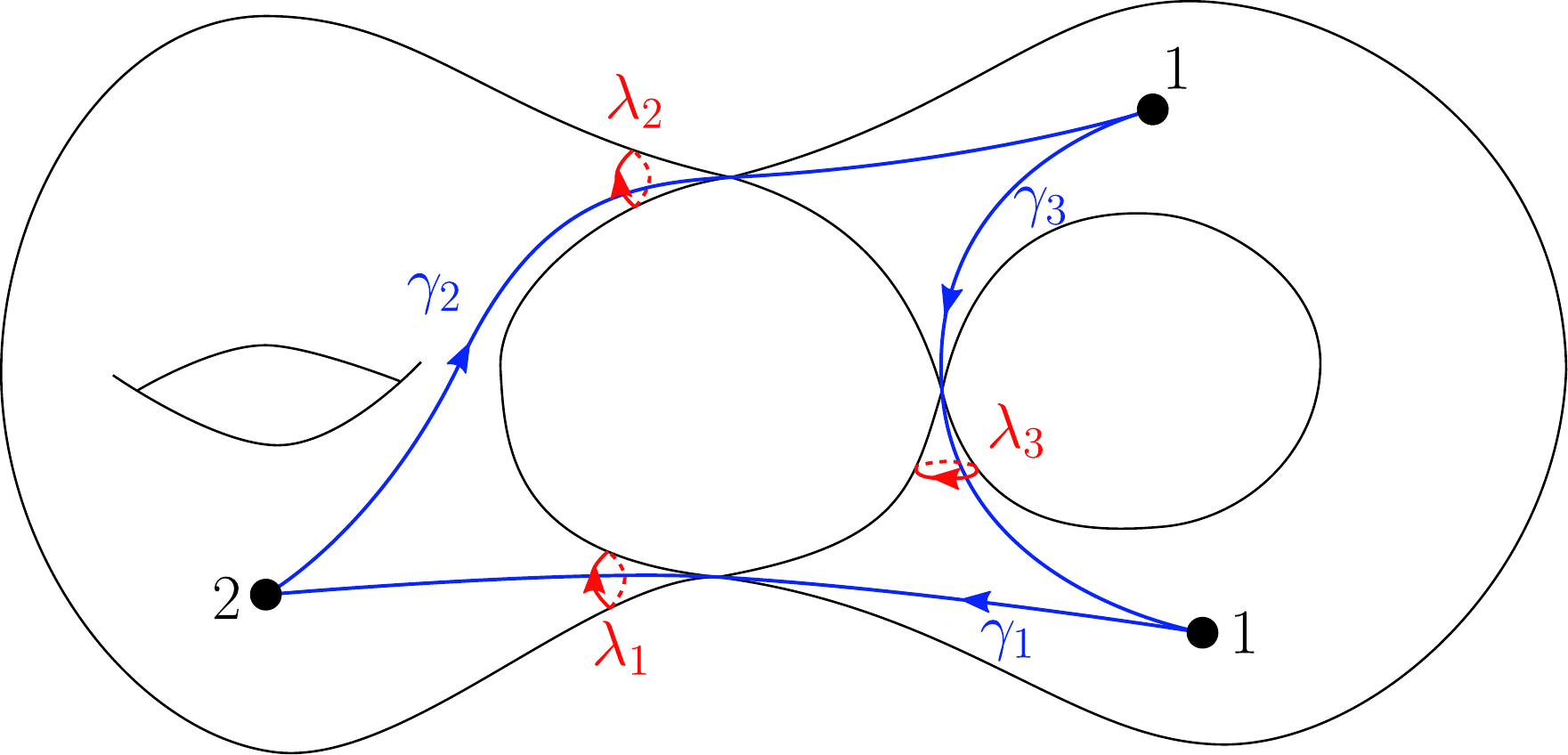}
  \end{minipage}

  \begin{minipage}[t]{0.35\linewidth}
    \vspace{0pt}
    \centering
    \renewcommand{\arraystretch}{1.7}
    \begin{tabular}{c}
      Period equations cutting out $M$ \\\hline
      $\int_{\gamma_1}\omega + \int_{\gamma_2} \omega+ \int_{\gamma_3}\omega = 0$ \\
      $\int_{\lambda_1}\omega + \int_{\lambda_2} \omega+ \int_{\lambda_3}\omega = 0$
    \end{tabular}
  \end{minipage}
  \caption{A local example in $\Omega\calM_{3,3}(1,1,2)$}
  \label{fig:NoPairwise}
\end{figure}

Note that \eqref{item:lin} follows from the way we've defined $M$.

To verify condition \eqref{item:an}, it suffices to convert the period coordinate equations into equations in analytic coordinates around $p_0$, using the ideas of \Cref{sec:transverse}. Indeed, the second equation, which involves the periods over horizontal vanishing cycles $\lambda_i$ already extends to an analytic equation.  The first equation, which involves the $\gamma_i$, can be written in terms of log periods as
\begin{align*}
  \left( \Psi_{\gamma_1} + \int_{\lambda_1} \omega \cdot \ln s_1\right) +   \left( \Psi_{\gamma_2} + \int_{\lambda_2} \omega \cdot \ln s_2\right) +   \left( \Psi_{\gamma_3} + \int_{\lambda_3} \omega \cdot \ln s_3\right) = 0.
\end{align*}
Since $\lambda_1$ and $\lambda_2$ are equal in absolutely homology, and using the second equation relating periods over $\lambda_i$, the above equation becomes
\begin{align*}
  \left( \Psi_{\gamma_1} + \Psi_{\gamma_2} + \Psi_{\gamma_3} \right) + \left(\int_{\lambda_1} \omega\right) (\ln s_1 + \ln s_2 - 2\ln s_3) = 0.
\end{align*}
Exponentiating both sides and rearranging gives
\begin{align*}
  \exp \left( \Psi_{\gamma_1} + \Psi_{\gamma_2} + \Psi_{\gamma_3} \right) s_1 s_2 -s_3^2 = 0
\end{align*}
This is an analytic equation in analytic coordinates around $p_0$, satisfied at the point $p_0$ where $s_1=s_2=s_3=0$ at $p_0$.  This establishes \eqref{item:an}.

For \eqref{item:symp}, observe that the two equations, corresponding to $\gamma_1+\gamma_2+\gamma_3$ and $\lambda_1, \lambda_2, \lambda_3$, are both absolute homology classes, and their intersection pairing is non-zero.

Having verified that $M$ is in fact a local example, we now show that \Cref{prop:pairwise-cross} does not hold in this setting.  Indeed, the first defining equation of~$M$ crosses all three horizontal vanishing cycles $\lambda_1,\lambda_2,\lambda_3$, and is the only defining equations of~$M$ that crosses any of these three horizontal vanishing cycles.  Hence $\lambda_1,\lambda_2$ are $M$-cross-related, but there is no defining equation~$F$  that crosses $\lambda_1,\lambda_2$ but not $\lambda_3$.
\end{exa}

\subsection{Boundary stratification of affine invariant manifolds}
For potential use for classifying affine invariant manifolds by recursively applying degeneration techniques, we record here two results on their boundaries. The first result applies more generally, for real-linear subvarieties of meromorphic strata --- the context in which our generalization of Cylinder Deformation Theorem applies.
\begin{prop}\label{prop:noncompact}
  Let~$M$ be a linear subvariety defined by equations with all coefficients real. Then for any $\Gamma$ such that $\dim\pMG>0$, the boundary stratum $\pMG\subset\oM$ is not compact.
\end{prop}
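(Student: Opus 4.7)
The approach is to exploit the $\CC^*$-scaling action on $\LMS$ --- whose quotient gives the compact $\PP\LMS$ --- and to show that a non-empty $\pMG$ must contain a closed non-compact $\CC^*$-orbit. The first observation is that the defining linear equations of~$M$, being $\CC$-linear in the periods with no constant term, are automatically $\CC^*$-invariant: scaling $\omega$ by $c\in\CC^*$ scales every period $\int_\gamma\omega$ by~$c$, so any relation $\sum a_i\int_{\gamma_i}\omega=0$ is preserved (the real-coefficients hypothesis plays no role here). Hence $M$ is $\CC^*$-invariant, and so is its closure $\oM$. The $\CC^*$-action preserves each boundary stratum $\DG$ because scaling alters neither the underlying stable pointed curve nor the enhancement, so $\pMG=\oM\cap\DG$ is $\CC^*$-invariant as well.

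Next I would show that every $\CC^*$-orbit in~$\LMS$ is closed and non-compact. The stabilizer of any point under the scaling action is contained in the finite automorphism group of the underlying stable pointed curve, so each orbit is a quotient of~$\CC^*$ by a finite group, and in particular is $1$-dimensional and non-compact. Moreover, the orbit cannot accumulate inside~$\LMS$: the ``limits'' as $c\to 0$ or $c\to\infty$ would require the differential on some level to vanish or blow up, which is forbidden by the multi-scale differential axioms. Thus each orbit is a closed, non-compact subset of $\LMS$.

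To conclude, pick any $p\in\pMG$ (such a~$p$ exists since $\dim\pMG>0$). Then the orbit $\CC^*\cdot p$ is contained in~$\pMG$ by $\CC^*$-invariance, is closed in $\LMS$ and therefore closed in~$\pMG$ in the subspace topology, and is non-compact. Hence $\pMG$ contains a closed non-compact subset and is itself non-compact. The only technical point to be mindful of is that the $\CC^*$-action genuinely preserves the stratification of~$\LMS$ including the prong-matching data, but this is exactly what is encoded in the construction of~$\PP\LMS=\LMS/\CC^*$ in~\cite{BCGGMmsds}. As a byproduct, the argument in fact shows that $\pMG$ is non-compact whenever it is non-empty, so the hypothesis $\dim\pMG>0$ is equivalent to non-emptiness.
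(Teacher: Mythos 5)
Your argument is formally valid for the statement as literally written, and it takes a genuinely different route from the paper's. You exploit the $\CC^*$-invariance of $\pMG$ (linear, homogeneous equations, so $M$, $\oM$, and $\pMG$ are all preserved by the scaling action), together with the fact that $\LMS\to\PP\LMS$ is a $\CC^*$-orbibundle with no zero-section (multi-scale differentials never vanish), so that $\CC^*$-orbits are closed and non-compact. The paper instead projects $\pMG$ to the strata on the various components using the levelwise product description from~\cite{fred}, and invokes the non-compactness of positive-dimensional affine invariant manifolds in holomorphic strata (via the $\operatorname{SL}(2,\RR)$-action on a saddle connection) together with Chen's non-existence of compact complex subvarieties in meromorphic strata. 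Your route is much more elementary and avoids both of those inputs.

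That said, your ``byproduct'' is exactly where the gap lies. You correctly observe that the real-coefficient hypothesis is unused and that $\pMG$ is non-compact as soon as it is non-empty. But this is in tension with the paper's immediate use of the Proposition: the proof of~\Cref{cor:hashorizontal} opens with ``By the Proposition, being deepest is equivalent to $\pMG$ simply being a point'' and then projects this point to the unprojectivized top-level stratum $\Omega\calM$ --- impossible if every non-empty $\pMG$ contains a non-compact $\CC^*$-orbit. The paper evidently has in mind the projectivized picture (the image of $\pMG$ in the compact $\PP\LMS$, i.e.~non-compactness in directions transverse to the scaling), and that is the content that the projection-to-components argument actually delivers and that is needed downstream; it is also where the $\operatorname{SL}(2,\RR)$ and Chen inputs earn their keep. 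Your $\CC^*$-orbit argument says nothing there, since $\CC^*$-orbits collapse to points in $\PP\LMS$. So while you have proved the literal unprojectivized statement, you should notice that this statement is trivially true in a way the paper cannot have intended, and the genuine content --- that $\pMG$ must degenerate in directions other than pure rescaling --- is not reached by your method.
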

\begin{proof}
By~\cite{fred}, the boundary~$\pMG$ is a product of linear subvarieties of various strata of differentials, also all defined by linear equations with real coefficients. Suppose for contradiction that $\dim\pMG$ is compact. Then for any irreducible component $X_v$ of $X$, the image of $\pMG$ under the projection to the moduli of differentials on the corresponding component would have to be compact. Since $\dim\pMG>0$, there must exist some $X_v$ such that the image of $\pMG$ gives a positive-dimensional linear subvariety of the space of differentials on that component. If the differential considered on $X_v$ is holomorphic, we claim that this is impossible since there does not exist any compact $\GL^+(2,\RR)$ affine invariant manifold (because we can choose a saddle connection and make it shrink to zero under the action of $\operatorname{SL}(2,\RR)$). If the differential considered on $X_v$ is meromorphic, then by~\cite{chen_affine} the corresponding stratum of differentials does not contain any compact complex subvarieties. Thus in either case we have a contradiction.
\end{proof}
While we do not know a counterexample to this Proposition for linear subvarieties defined by equations with complex coefficients, note that the proof uses $\GL^+(2,\RR)$ action for holomorphic components, as it is not known whether holomorphic strata contain any compact complex subvarieties. Even if we start with a linear subvariety of a meromorphic stratum, it could be that in the boundary there is a top level component where the differential is holomorphic.

Applying this Proposition recursively, given a real-linear subvariety~$M$ of complex dimension $a$, one can consider a $\Gamma_1$ that corresponds to a divisorial boundary component, that is such that $\dim_\CC\pMG[\Gamma_1]=a-1$, and then since $\pMG$ is not compact, consider $\Gamma_2$ such that $\dim_\CC\pMG[\Gamma_2]=a-2$, and so on, thus constructing a sequence of divisorial degenerations of length precisely $a$:
$$
 pt=\Gamma_0\rightsquigarrow\dots\rightsquigarrow\Gamma_a;\quad\hbox{such that}\quad\dim_\CC\pMG[\Gamma_i]=a-i.
$$
Compare this to~\Cref{rem:chain}, where many sequences of divisorial degenerations are constructed, {\em starting from a given $\lG$}. Here we claim that choosing at each step any divisorial boundary component, we can always construct a sequence of degenerations of length~$a$, i.e. going down to a point. For the case of affine invariant manifolds, we can say a bit more.
\begin{cor}\label{cor:hashorizontal}
Let~$M$ be an affine invariant manifold, i.e.~a real-linear subvariety of a holomorphic stratum. If $\pMG$ is a deepest stratum of $\oM$, that is $\pMG[\Gamma']=\emptyset$ for any degeneration $\Gamma\rightsquigarrow\Gamma'$, then every top level vertex of $\Gamma$ has a horizontal edge attached to it.
\end{cor}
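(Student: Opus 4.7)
Suppose for contradiction that some top level vertex $v$ of $\Gamma$ carries no horizontal edge. The strategy is to produce a proper degeneration $\Gamma \rightsquigarrow \Gamma''$ with $\pMG[\Gamma''] \ne \emptyset$, contradicting the deepness hypothesis. First observe that since $v$ lies at level $0$ and all edges at $v$ are vertical going downward, each contributes a zero of order $\kappa_e - 1 \ge 0$ on its top side. Hence the top level differential $\eta_v$ on $X_v$ is holomorphic, which forces $g_v \ge 1$. Standard results on translation surfaces then guarantee that $(X_v,\eta_v)$ admits a Euclidean cylinder in some direction, so after replacing $p_0$ by $r \cdot p_0$ for an appropriate rotation $r \in \operatorname{SO}(2) \subset \GL^+(2,\RR)$ we may assume that $X_v$ carries a horizontal cylinder $C$.

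Since $M$ is defined by linear equations with real coefficients, the $\GL^+(2,\RR)$-action preserves $M$ and acts levelwise on $B_\Gamma$, preserving the stratification by level graph. Hence the orbit $\{a_t(r\cdot p_0)\}_{t \ge 0}$ stays inside $\pMG$, and under $a_t = \operatorname{diag}(1,e^t)$ the circumference of $C$ is fixed while its height is multiplied by $e^t$, so the modulus of $C$ tends to $\infty$. By compactness of $\PP\oM \subseteq \PP\LMS$, after passing to a subsequence $\PP a_{t_n}(r \cdot p_0)$ converges to some $\PP q \in \PP\oM$. By~\Cref{lm:cyl-node} applied in a neighborhood of $q$, once the modulus of $C$ exceeds the threshold $R$ its core curve is the vanishing cycle of a horizontal node; hence in the limit, $C$ is replaced by a horizontal node on the top level component $X_v$. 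The level graph $\Gamma''$ of $q$ therefore strictly refines $\Gamma$ by at least one additional horizontal edge attached to $v$, so $\Gamma \rightsquigarrow \Gamma''$ is a proper degeneration.

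Finally $q \in \overline{\pMG} \cap \DG[\Gamma''] \subseteq \pMG[\Gamma'']$, so $\pMG[\Gamma''] \ne \emptyset$, contradicting the assumption that $\pMG$ is deepest. The main technical obstacle is to verify rigorously that the limit $\PP q$ genuinely acquires a new horizontal node at $v$ rather than, say, forcing a further collapse of the existing level structure. This should follow by working in the plumbing coordinates of~\Cref{sec:transverse} near $q$: along the orbit $\{a_t(r \cdot p_0)\}$ the scaling parameters $t_i$ remain bounded away from $0$ because the level structure $\Gamma$ is preserved, while the horizontal plumbing parameter at the new node tends to zero as $C$ is glued from a thinner and thinner plumbing annulus in the sense of~\Cref{lm:cyl-node} and~\cite{BCGGMmsds}.
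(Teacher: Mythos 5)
Your proof is correct in its core strategy, but it takes a genuinely different and longer route than the paper's. The paper's argument is a two-step reduction: it first invokes \Cref{prop:noncompact} to conclude that a deepest stratum $\pMG$ must be zero-dimensional (since a positive-dimensional real-linear $\pMG$ cannot be compact), and then observes that a point of $\pMG$ projecting to a holomorphic stratum on the top-level component $X_v$ would be a single flat surface fixed by all of $\GL^+(2,\RR)$, which is impossible. Your argument, by contrast, is self-contained: it does not pass through \Cref{prop:noncompact} or the zero-dimensionality reduction at all, but instead directly manufactures a degeneration by rotating so that $X_v$ has a horizontal cylinder (available by Masur--Smillie since $\eta_v$ is holomorphic and nonzero, so $g_v\ge 1$), stretching it with $a_t$, and extracting a projective limit $\PP q$ which, because the cylinder's modulus blows up, must lie in a strictly deeper stratum. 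This buys you an explicit identification of where the degeneration happens (a new horizontal node on $X_v$), whereas the paper's proof just knows abstractly that a fixed point cannot exist; the price is extra technical overhead.

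Two points in your argument deserve more care, though neither is fatal. First, \Cref{lm:cyl-node} is stated for smooth flat surfaces $p\in U\cap\omodulin(\mu)$ near a boundary point, whereas you are applying it to the nodal surfaces $a_{t_n}(r\cdot p_0)\in\pMG\subset\DG$; the underlying hyperbolic-geometric mechanism (large flat modulus $\Rightarrow$ short hyperbolic geodesic $\Rightarrow$ pinching in the Deligne--Mumford limit of the top-level component) does adapt, and for the contradiction you actually only need that $\Gamma(q)$ is a \emph{strict} degeneration of $\Gamma$ (not that the new edge is necessarily horizontal and at $v$), so the full force of the lemma is not required. Second, the closing paragraph about plumbing coordinates is muddled: the scaling parameters $t_i$ vanish identically on $\DG$, so ``bounded away from $0$'' is not the right formulation; the cleaner justification is simply that the top-level Riemann surface $a_{t_n}(r\cdot X_v)$ degenerates in $\overline{\calM}_{g_v,n_v}$, so its limit is nodal and hence $q\notin\DG$.
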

\begin{proof}
By the Proposition, being deepest is equivalent to $\pMG$ simply being a point. As in the proof of the Proposition, the projection of $\pMG$ to the stratum corresponding to some top level component $X_v$ must be a real-linear subvariety of that stratum, which in this case must be just one point. If $X_v$ has no horizontal nodes, then the twisted differential $\eta_v$ on $X_v$ has no poles: it has zeroes at any marked points $z_i$, and possibly zeroes at the vertical nodes. But then, again as in the proof above, there does not exist any flat surface in a holomorphic stratum fixed by $\GL^+(2,\RR)$, so we have a contradiction.
\end{proof}
We note that this Corollary is false for real-linear subvarieties of meromorphic strata. For example, for the case of the closure of the Hurwitz space considered in~\Cref{sec:HW}, the boundary points have no horizontal nodes whatsoever. This is also the case for the closure of the double ramification cycle considered in~\cite{fredDR}.

\bibliographystyle{amsalpha}
\bibliography{plumb_biblio}

\end{document}